\documentclass[leqno]{amsart}
\usepackage[utf8]{inputenc}
\usepackage[english]{babel}
\usepackage{newclude}
\usepackage{amsfonts,amssymb,amsmath,amsgen, amsthm}
\usepackage{color, xcolor}
\usepackage{pdfsync}
\usepackage{enumerate}
\usepackage{centernot}
\usepackage{comment}
\usepackage[a4paper, total={6in, 8in}]{geometry}
\usepackage{hyperref}

\newtheorem{theorem}{Theorem}[section]
\newtheorem{lem}[theorem]{Lemma}
\newtheorem{prop}[theorem]{Proposition}
\newtheorem{cor}[theorem]{Corollary}
\newtheorem{remark}[theorem]{Remark}
\theoremstyle{definition}

\def\R{\mathbb R}

\def\N{\mathbb N}
\def\d{\frac{d}{dt}}

\def\eps{\varepsilon}

\title[Constrained parabolic equations]{Global solutions and Asymptotic behavior to a norm-preserving non-local parabolic flow}

\author[B. Shakarov]{Boris Shakarov}
\address{Institut de math\'ematiques de Toulouse, 118, route de Narbonne, Toulouse, France}
\email{boris.shakarov@math.univ-toulouse.fr}

\date{\today}

\begin{document}
	\maketitle
	\begin{abstract}
We consider a nonlinear parabolic model that forces solutions to stay on a $L^2$-sphere through a nonlocal term in the equation. 
We study the local and global well-posedness on a bounded domain and the whole Euclidean space in the energy space. 
Then, we consider the solutions' asymptotic behavior. We prove strong convergence to a stationary state and asymptotic convergence to the ground state in bounded domains when the initial condition is positive.
	\end{abstract}


\section{Introduction}

In this article, we consider the following nonlinear, nonlocal parabolic flow
\begin{equation}\label{eq: flow}
    \begin{cases}
        \partial_t u = \Delta u - \omega u + \mu[u] |u|^{2\sigma}u, \\
        u_{| \partial \Omega} = 0, \\
        u(0) = u_0 \in H^1_0(\Omega),
    \end{cases}
\end{equation}
where $u:  \R^d \times \R \to \R$, $\omega \in \R$, $\Omega \subset \R^d$ is a bounded domain with $C^2$ boundary or $\Omega \equiv \R^d$, $0 \leq \sigma$, $\sigma < \frac{2}{d-2}$ when $ d \geq 3$, and $\sigma < \infty$ for $d \leq 2$. When $\Omega = \R^d$, the Dirichlet boundary condition is intended as $ \lim_{|x| \to \infty} u(x) = 0$. The functional $\mu$ is defined so that the $L^2$-norm of solutions remains constant in time, and it is given by
\begin{equation}\label{eq: mu}
    \mu[u(t)] = \frac{\| \nabla u(t)\|_{L^2}^2 + \omega \| u(t) \|_{L^2}^2}{\| u(t) \|_{L^{2\sigma + 2}}^{2\sigma + 2}}.
\end{equation}

Similar types of nonlocal heat flows arise in geometrical problems where the dynamics must preserve some $L^p$-norms \cite{Au98}, \cite{St00}. Moreover,  parabolic equations under the fixed $L^2$ norm constraints play a pivotal role in studying stationary states \cite{ChSuTo00, Du02, BaDu04, BeDuCo20} with numerical approaches. A similar model, already studied in \cite{AnCaSh22}, is widely used for numerical simulations and is (the discretization of) the constrained gradient flow 
\begin{equation}\label{eqHeat2}
    \partial_t u = \Delta u + \omega[u] u + \beta |u|^{2\sigma}u, \quad \omega[u] = \frac{\| \nabla u\|_{L^2}^2 - \beta \| u \|_{L^{2\sigma + 2}}^{2\sigma + 2}}{{\| u\|_{L^2}^2}},
\end{equation}
see also \cite{BeDuCo20, MaCh09, CaLi09}. In \cite{AnCaSh22}, the authors showed that in the case $\beta > 0$ and $\sigma >\frac{2}{d}$, global well-posedness and uniform boundedness of solutions do not hold for any initial data. In comparison, for model \eqref{eq: flow},  we show that a uniform bound for the $H^1$ norm of solutions holds independently on initial data or $\sigma < \frac{2}{d-2}$. Thus this model allows to consider a broader range of nonlinearities.  The uniform bound is further exploited to show asymptotic convergence to stationary states independently of the initial data. 

In \cite{MaCh13}, the equation \eqref{eq: flow} with $\omega = 0$ was examined on compact manifolds. In that work, local existence is shown from the same iterative approach as in \cite{CaLi09}.  Our work establishes local well-posedness in any bounded domain and for any $\omega \in \mathbb{R}$ through a different approach, namely the Schauder fixed point argument. We will also consider the case $\Omega = \R^d$. In this case, the contraction principle is employed to show local well-posedness for any $\sigma$ for $d \leq 4$. For $d \geq 4$, we need to assume $0 \leq \sigma < \frac{1}{(d-2)^+}$ or $\frac{2}{d} \leq \sigma < \frac{2}{(d-2)^+}$ due to an unusual mixed type nonlinearity, see Section \ref{secExist} for details. We use the notation 
\begin{equation}\label{eqd-2Plus}
    \frac{1}{(d-2)^+} = \begin{cases}
        \infty \ &\mbox{ if } d \leq 2, \\
        \frac{1}{d-2} \ &\mbox{ if } d \geq 3.
    \end{cases}
\end{equation}

The first result of this work is the following.

\begin{theorem}\label{thm: existence}
Let $\omega \in \R$,  $\Omega$ be a bounded domain with $C^2$ boundaries or $\Omega \equiv \R^d$ and $0 \leq \sigma < \frac{2}{(d-2)^+}$. If $\Omega \equiv \R^d$, then let $0 \leq \sigma < \frac{1}{(d-2)^+}$ or $\frac{2}{d} \leq \sigma < \frac{2}{(d-2)^+}$.  For any $u_0 \in H^1_0(\Omega)$, there exists $T_{max} >0$ and a  solution $u\in C\big([0,T_{max}), H^1_0(\Omega) \big)$ to \eqref{eq: flow}. Moreover, either $T_{max} = \infty$ or $T_{max} < \infty$  and 
\begin{equation*}
    \lim_{t \to T_{max}} \| \nabla u(t) \|_{L^2} = \infty.
\end{equation*}
\end{theorem}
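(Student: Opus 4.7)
\medskip

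\noindent\textbf{Proof proposal.} My plan is to recast \eqref{eq: flow} in Duhamel form
\[
u(t) = e^{t(\Delta-\omega)}u_0 + \int_0^t e^{(t-s)(\Delta-\omega)} \, \mu[u(s)]\,|u(s)|^{2\sigma}u(s)\, ds =: \Phi(u)(t),
\]
where $e^{t(\Delta-\omega)}$ denotes the semigroup on $L^2$ generated by $\Delta-\omega$ with Dirichlet boundary data (or the usual heat semigroup on $\R^d$), and then set up a fixed point argument for $\Phi$. The trivial case $u_0\equiv 0$ gives $u\equiv 0$, so I may assume $u_0\neq 0$, which is what makes $\mu[u]$ well defined on a short time interval via continuity of $t\mapsto \|u(t)\|_{L^{2\sigma+2}}$.

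\medskip

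\noindent\emph{Bounded domain via Schauder.} I would fix $M:=2\|u_0\|_{H^1_0}$ and $m:=\tfrac12\|u_0\|_{L^{2\sigma+2}}$, and consider
\[
K_T=\bigl\{u\in C([0,T];H^1_0(\Omega))\,:\,\|u\|_{L^\infty_t H^1_0}\le M,\ \|u(t)\|_{L^{2\sigma+2}}\ge m\ \forall t\in[0,T],\ u(0)=u_0\bigr\},
\]
a closed convex bounded subset of the Banach space $C([0,T];H^1_0(\Omega))$. For $u\in K_T$, the Sobolev embedding (valid under $\sigma<\tfrac{2}{(d-2)^+}$) bounds $\|u\|_{L^{2\sigma+2}}\lesssim M$, so the numerator of $\mu[u]$ is controlled by $M^2$, while the denominator is bounded below by $m^{2\sigma+2}$; hence $|\mu[u(s)]|\le C(M,m)$. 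Combining with the same Sobolev embedding, $\mu[u]\,|u|^{2\sigma}u$ is bounded in a suitable $L^q(\Omega)$, and the regularizing bound $\|e^{t(\Delta-\omega)}\|_{L^q\to H^1_0}\lesssim t^{-\alpha}$ for an integrable exponent $\alpha<1$ lets me estimate $\|\Phi(u)(t)-e^{t(\Delta-\omega)}u_0\|_{H^1_0}\le C(M,m)\,T^{1-\alpha}$. Choosing $T$ small enough gives $\Phi(K_T)\subset K_T$ (the lower bound on the $L^{2\sigma+2}$-norm follows by the same continuity argument). Compactness of $\Phi(K_T)$ in $C([0,T];H^1_0(\Omega))$ comes from the smoothing of the heat semigroup lifting $\Phi(u)(t)$ into a higher Sobolev space together with equicontinuity in $t$ and the Rellich--Kondrachov compact embedding; continuity of $\Phi$ on $K_T$ is a routine consequence of the bounds on $\mu[\cdot]$. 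Schauder's theorem then produces a fixed point.

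\medskip

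\noindent\emph{Whole space via contraction.} On $\R^d$ the embedding is no longer compact, so I would instead run a Banach fixed point in a space mixing $C([0,T];H^1(\R^d))$ with an $L^q_tL^r_x$-type norm tailored to the Sobolev pair given by $\sigma$. The functional $\mu[u]$ is again controlled on the ball $K_T$ analogously to the bounded case. The two regimes $\sigma<\tfrac{1}{(d-2)^+}$ and $\tfrac{2}{d}\le\sigma<\tfrac{2}{(d-2)^+}$ arise because the effective nonlinearity behaves like $\|\nabla u\|_{L^2}^2 \cdot |u|^{2\sigma-1}\,u/\|u\|_{L^{2\sigma+2}}^{2\sigma+2}$ (that is, degree $2\sigma+1$ pointwise but with a nonlocal $H^1$ prefactor), so different Sobolev exponents become admissible; for each regime I would pick $(q,r)$ so that $|u|^{2\sigma}u$ lies in the dual Strichartz-type space and close the contraction through short-time smallness in $T$.

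\medskip

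\noindent\emph{Blow-up alternative.} Once local existence is obtained on a maximal interval $[0,T_{\max})$, the standard argument applies: if $T_{\max}<\infty$ and $\limsup_{t\to T_{\max}}\|\nabla u(t)\|_{L^2}<\infty$, then $\|u\|_{H^1_0}$ remains bounded on $[0,T_{\max})$ and $\|u(t)\|_{L^{2\sigma+2}}$ stays above a positive constant (otherwise one can restart from any $t_0$ close to $T_{\max}$ on a uniform time interval), so the local existence result applied at time $t_0$ close to $T_{\max}$ would extend the solution beyond $T_{\max}$, a contradiction.

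\medskip

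\noindent\emph{Main obstacle.} I expect the delicate point to be keeping $\mu[u]$ under control: the denominator $\|u\|_{L^{2\sigma+2}}^{2\sigma+2}$ can become small and must be bounded below along the fixed point iteration, which forces the ``ball'' $K_T$ to encode a two-sided norm constraint rather than the usual one-sided one. On $\R^d$, the additional difficulty is the absence of compactness, requiring the quantitative contraction approach with the exponent restrictions on $\sigma$ coming from the mixed local/nonlocal structure of $\mu[u]|u|^{2\sigma}u$.
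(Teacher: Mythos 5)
Your bounded-domain argument has a genuine gap at the point where you invoke Schauder: the set $K_T$ you describe is \emph{not} convex. The lower-bound constraint $\|u(t)\|_{L^{2\sigma+2}}\ge m$ defines (for each $t$) the complement of a ball, and convex combinations of two elements of $K_T$ can violate it (two functions of norm $m$ can average to something of strictly smaller norm); the affine constraint $u(0)=u_0$ does not repair this at later times. So Schauder's theorem does not apply to $K_T$ as written, and you cannot simply drop the lower bound either, because then $\mu[u]$ is unbounded (indeed possibly undefined) on the remaining ball, which is exactly the difficulty the two-sided constraint was meant to handle. The paper circumvents precisely this obstruction by regularizing the nonlocal term to $\mu_\eps$ as in \eqref{eqMuEps} (adding $\eps$ to the denominator), so that the fixed-point map is well defined and bounded on an honest convex ball of $C([0,T],H^1_0)$; Schauder is applied there (with $H^2\cap H^1_0$ data and Aubin--Lions providing the compactness), and the lower bound is recovered \emph{a posteriori} only for the fixed points $u_\eps$ themselves, via the identity \eqref{eqMassUeps} combined with \eqref{eq: UnifBnPot}, uniformly in $\eps$ on a short time interval; one then passes to the limit $\eps\to0$ and uses a density argument to reach $H^1_0$ data. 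A repair within your own framework would be to replace $K_T$ by a ball centered at the free evolution, $\{u:\ \|u-e^{\cdot\Delta}u_0\|_{C([0,T],H^1_0)}\le\delta\}$, which is convex and, for $T$ small, forces $\|u(t)\|_{L^{2\sigma+2}}\ge\tfrac12\|u_0\|_{L^{2\sigma+2}}$ by the triangle inequality and continuity of $t\mapsto\|e^{t\Delta}u_0\|_{L^{2\sigma+2}}$; but as stated your convexity claim is false and the argument does not close.

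Two secondary points. First, your whole-space contraction is essentially the paper's route (there the lower-bound constraint is harmless, since Banach's theorem needs completeness, not convexity; the paper runs one contraction in Strichartz-type norms for $\tfrac12\le\sigma<\tfrac{2}{(d-2)^+}$ and a separate $L^\infty_tH^1$ contraction using the parabolic smoothing estimate \eqref{eq: smooth} for $\sigma<\tfrac1{(d-2)^+}$), so that part is fine in outline. Second, your blow-up alternative is too quick: the local existence time depends on the lower bound of $\|u(t)\|_{L^{2\sigma+2}}$ as well as on $\|u(t)\|_{H^1}$, and the parenthetical ``otherwise one can restart'' does not establish that this quantity stays bounded away from zero as $t\to T_{max}$. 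In a bounded domain this follows from conservation of the $L^2$ norm together with \eqref{eq: UnifBnPot}; on $\R^d$ one needs an additional ingredient (in the paper, the monotonicity of the functional $F$), so this step should be argued rather than asserted.
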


We will proceed by studying the global well-posedness of model \eqref{eq: flow}. We show that solutions are always global in time, independently of (a subcritical) $\sigma$ and $\omega$, and the $H^1$ norm remains uniformly bounded in time.  This is in contrast with the model \eqref{eqHeat2} which allows growing up solutions as shown in \cite{AnCaSh22} for the intercritical regime $\frac{2}{d} \leq \sigma < \frac{2}{(d-2)^+}$. We will obtain this result by showing that 
\begin{equation}\label{eqFIntr}
     F[u(t)] = \frac{\| \nabla u(t)\|_{L^2}^2 + \omega \| u(t) \|_{L^2}^2}{\| u(t) \|_{L^{2\sigma + 2}}^{2}}. 
\end{equation}
is a continuous and decreasing function of time.

\begin{theorem}
Under the hypothesis of Theorem \ref{thm: existence}, for any $u_0 \in H^1_0(\Omega)$, the corresponding solution $u$ to \eqref{eq: flow} is global in time, that is  $u\in C\big([0,\infty), H^1_0(\Omega) \big)$. 
\end{theorem}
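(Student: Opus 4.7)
The plan is to rely on the blow-up alternative from Theorem \ref{thm: existence}: it will suffice to prove a uniform-in-time bound on $\|\nabla u(t)\|_{L^2}$ throughout $[0,T_{max})$. Two ingredients combine to this end. First, multiplying \eqref{eq: flow} by $u$ and integrating shows that the specific form of $\mu[u]$ in \eqref{eq: mu} forces $\|u(t)\|_{L^2} = \|u_0\|_{L^2}$ throughout the life-span. Second, as announced in the introduction, the energy-type quantity $F[u(t)]$ in \eqref{eqFIntr} will turn out to be continuous in $t$ and non-increasing, so that $F[u(t)] \le F[u_0]$ for all $t$. These two conservation/dissipation facts, combined with a subcritical Gagliardo--Nirenberg inequality, will prevent $\|\nabla u\|_{L^2}$ from blowing up.

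The core computation is the monotonicity of $F$. Setting $E(t) := \|\nabla u(t)\|_{L^2}^2 + \omega\|u(t)\|_{L^2}^2$ and $P(t) := \|u(t)\|_{L^{2\sigma+2}}^{2\sigma+2}$, one has $F[u] = E\, P^{-1/(\sigma+1)}$ and $\mu[u] = E/P$. Testing \eqref{eq: flow} against $u_t$, and using $\frac{d}{dt}P = (2\sigma+2)\int |u|^{2\sigma}u\, u_t\,dx$, gives
\[
\frac{dE}{dt} = -2\|u_t\|_{L^2}^2 + \frac{E}{(\sigma+1)P}\frac{dP}{dt}.
\]
Differentiating $F = E\,P^{-1/(\sigma+1)}$ by the product rule then produces exactly the cancellation of the cross-terms, leaving
\[
\frac{d}{dt}F[u(t)] = -\frac{2\|u_t(t)\|_{L^2}^2}{P(t)^{1/(\sigma+1)}} \le 0.
\]
Since $\|u(t)\|_{L^2} = \|u_0\|_{L^2} > 0$ (the case $u_0\equiv 0$ being trivial), $P(t)$ stays strictly positive; continuity of $F[u(\cdot)]$ is then inherited from $u\in C([0,T_{max});H^1_0(\Omega))$ together with the subcritical embedding $H^1_0\hookrightarrow L^{2\sigma+2}$.

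From $F[u(t)]\le F[u_0]$ I read off $\|\nabla u(t)\|_{L^2}^2 \le F[u_0]\|u(t)\|_{L^{2\sigma+2}}^2 - \omega\|u_0\|_{L^2}^2$. If $F[u_0]\le 0$ (which forces $\omega\le 0$), the right-hand side is bounded above by $-\omega\|u_0\|_{L^2}^2$, yielding the uniform bound immediately. If $F[u_0]>0$, the Gagliardo--Nirenberg inequality produces
\[
\|u\|_{L^{2\sigma+2}}^{2} \lesssim \|\nabla u\|_{L^2}^{\sigma d/(\sigma+1)}\,\|u_0\|_{L^2}^{(2\sigma+2-\sigma d)/(\sigma+1)},
\]
where the exponent $\sigma d/(\sigma+1)$ is strictly less than $2$ precisely because $\sigma < \frac{2}{(d-2)^+}$. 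A standard absorption argument (any $X$ satisfying $X^2 \le A X^a + B$ with $a<2$ is bounded by a constant depending only on $A,B,a$) then delivers a uniform bound on $\|\nabla u(t)\|_{L^2}$, contradicting the blow-up scenario.

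The main technical obstacle I anticipate is the rigorous justification of the derivative computation of $F[u(t)]$, which requires enough regularity to make sense of $\int(\Delta u -\omega u)u_t\,dx$. On the existence interval, standard parabolic smoothing yields $u\in C((0,T_{max}); H^2\cap H^1_0)\cap C^1((0,T_{max}); L^2)$, which suffices to run the manipulations pointwise for $t>0$; the monotonicity of $F$ then extends to $t=0$ by continuity of $F[u(\cdot)]$. On $\R^d$ the same regularity is available via the heat semigroup; as an alternative, one could approximate $u_0$ by smoother data, establish the dissipation identity for the regularized solutions, and pass to the limit using the local well-posedness theory.
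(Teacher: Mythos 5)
Your proposal is correct and follows essentially the same route as the paper: conservation of the $L^2$ norm, monotonicity of the Lyapunov functional $F$, a Gagliardo--Nirenberg absorption argument using $\frac{d\sigma}{\sigma+1}<2$, and the blow-up alternative of Theorem \ref{thm: existence}. The only real difference is in form rather than substance: by differentiating $F=E\,P^{-1/(\sigma+1)}$ directly you get $\frac{d}{dt}F[u(t)]=-2\|u_t\|_{L^2}^2P^{-1/(\sigma+1)}\le 0$ without ever dividing by $E=\|\nabla u\|_{L^2}^2+\omega\|u\|_{L^2}^2$, which lets you bypass the paper's separate case \eqref{eq: KinDer} and its split into $\omega\ge 0$ (Corollary \ref{cor: boundness}) and $\omega<0$ (Proposition \ref{prp: bond2}), handling both signs in one absorption step.
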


Having established the existence of global solutions, our focus shifts to examining their asymptotic behavior. Given the Lyapunov functional \eqref{eqFIntr},
the asymptotic behavior can be studied with techniques similar to the gradient flow models (see e.g. \cite{Ha10}).  In particular, we show that the $\omega$-limit set
\begin{equation}\label{eq: omegaLimit}
    W(u_0) = \{ v \in H^1_0(\Omega)\, :  \, \exists (t_j)_{j \in \N} \subset \R^+, \, t_j \to \infty \mbox{ as } j\to \infty\, :  \, u(t_j) \to v \mbox { in } H^1_0(\Omega) \}.
\end{equation}
contains only stationary states. Stationary states are solutions to the elliptic problem  
\begin{equation}\label{eq: stst}
     0 = \Delta Q - \omega Q + \mu_Q|Q|^{2\sigma}Q,
\end{equation}
for some $\omega, \mu_Q \in \R$. The literature regarding solutions to \eqref{eq: stst} is extensive. Existence \cite{CaLi82}, uniqueness of positive solutions \cite{GiNiNi79, Kw89}, and the existence of infinitely many solutions \cite{BeLi83} have been proven. In particular, the set 
\begin{equation}\label{eq: statStates1}
    S_{u_0,\omega} = \{ Q \in H^1_0(\Omega) \, :  \, Q \mbox{ satisfies }  \omega Q = \Delta Q + \mu_Q |Q|^{2\sigma}Q, \, \| Q \|_{L^2} = \| u_0\|_{L^2} \}
\end{equation}
is nonempty and, in general, contains several elements for any $\omega >0$ and $u_0 \in H^1(\Omega)$.  
Moreover, minimizing the functional \eqref{eqFIntr} yields the grounds states \cite{BeWe10}, that is the unique strictly positive solution to \eqref{eq: stst}.
We will show the following.
\begin{theorem}
Under the hypothesis of Theorem \ref{thm: existence}, for any $\omega >0$ and $u_0 \in H^1_0(\Omega)$, we have $\emptyset \neq W(u_0) \subset S_{u_0,\omega}$. The same result holds for any $u_0 \in H^1_{rad}(\R^d)$, $d \geq 2$.
\end{theorem}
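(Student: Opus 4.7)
The plan is to exploit $F$ from \eqref{eqFIntr} as a Lyapunov functional and to conclude via a LaSalle-style invariance argument. The main identity to derive is
\begin{equation*}
    \frac{d}{dt} F[u(t)] \;=\; -\frac{2}{\|u(t)\|_{L^{2\sigma+2}}^{2}}\,\|\partial_t u(t)\|_{L^2}^{2},
\end{equation*}
which I would obtain by writing $F=N/D$ with $N=\|\nabla u\|_{L^2}^{2}+\omega\|u\|_{L^2}^{2}$ and $D=\|u\|_{L^{2\sigma+2}}^{2}$, substituting $-\Delta u+\omega u=-\partial_t u+\mu[u]|u|^{2\sigma}u$ from \eqref{eq: flow}, and observing that the two resulting cross terms cancel thanks to the identity $\mu[u]=N/D^{\sigma+1}$, which is precisely \eqref{eq: mu}. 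For $\omega>0$ and $\|u(t)\|_{L^2}\equiv\|u_0\|_{L^2}$, the numerator $N$ is bounded below by $\omega\|u_0\|_{L^2}^{2}>0$, while the uniform $H^1$ bound from the global existence result of the paper together with Sobolev embedding keeps $D(t)$ in a fixed compact subinterval of $(0,\infty)$. Integrating the identity then yields $F[u(t)]\searrow F_\infty$ and
\begin{equation*}
    \int_0^\infty \|\partial_t u(t)\|_{L^2}^{2}\,dt \;<\; \infty.
\end{equation*}

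To show $W(u_0)\neq\emptyset$ and to extract a first stationary limit, I would pick $t_j\to\infty$ with $\|\partial_t u(t_j)\|_{L^2}\to 0$ and use the uniform $H^1$ bound to extract a weakly convergent subsequence $u(t_j)\rightharpoonup v$ in $H^1_0(\Omega)$; the sequence $\mu[u(t_j)]$ is bounded (numerator bounded in $H^1$, denominator bounded below) and hence admits a further limit $\mu_*\in\R$. Strong $L^{2\sigma+2}$ convergence $u(t_j)\to v$ follows from Rellich--Kondrachov on bounded domains; on $\R^d$ I would first note that radial symmetry is preserved by \eqref{eq: flow} (it is rotation invariant, by \eqref{eq: mu}) and then invoke Strauss's compact embedding $H^1_{\mathrm{rad}}(\R^d)\hookrightarrow L^{2\sigma+2}(\R^d)$ for $d\geq 2$. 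Rewriting \eqref{eq: flow} as $(-\Delta+\omega)u(t_j)=-\partial_t u(t_j)+\mu[u(t_j)]|u(t_j)|^{2\sigma}u(t_j)$ and noting that the right-hand side converges strongly in $H^{-1}(\Omega)$ to $\mu_*|v|^{2\sigma}v$, the coercivity of $-\Delta+\omega$ for $\omega>0$ makes $(-\Delta+\omega)^{-1}\colon H^{-1}\to H^1_0$ an isomorphism, so the convergence of $u(t_j)$ to $v$ upgrades to strong $H^1_0$ convergence. Passing to the limit shows that $v$ solves \eqref{eq: stst} with $\mu_*=\mu[v]$, $L^2$-conservation yields $\|v\|_{L^2}=\|u_0\|_{L^2}$, and therefore $v\in S_{u_0,\omega}$.

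For the inclusion $W(u_0)\subset S_{u_0,\omega}$ in full generality, I would run a LaSalle invariance argument. Fix an arbitrary $v\in W(u_0)$ with $u(t_j)\to v$ strongly in $H^1_0$; continuity of $F$ at $v$ (where $D(v)>0$ since $\|v\|_{L^2}=\|u_0\|_{L^2}>0$) together with the existence of $F_\infty$ forces $F[v]=F_\infty$. Let $u^v$ denote the solution with initial datum $v$. By continuous dependence of the solution map on initial data in $H^1_0$ (coupled with the a priori uniform bound), one has $u(t_j+s)\to u^v(s)$ in $H^1_0$ for every $s\geq 0$, so $F[u^v(s)]=F_\infty$ for all $s\geq 0$. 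The Lyapunov identity then forces $\partial_t u^v\equiv 0$, i.e.\ $v$ is stationary and $v\in S_{u_0,\omega}$.

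The main obstacle I expect is the passage from weak to strong $H^1_0$ convergence along the extracted subsequence: weak compactness is free from the uniform bound, but the definition \eqref{eq: omegaLimit} of $W(u_0)$ demands strong convergence. On bounded domains this is handled cleanly by combining Rellich with the inversion of $-\Delta+\omega$, whereas on $\R^d$ compactness is lost in general; this is precisely why the statement restricts to radial data, since only then does Strauss's embedding restore the required compactness. A secondary, more technical point is making the LaSalle invariance argument rigorous in the present setting, which requires continuous dependence of solutions on initial data in $H^1_0$; this should follow from the contraction and Schauder constructions underlying Theorem \ref{thm: existence} combined with the uniform $H^1$ bound.
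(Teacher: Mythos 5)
Your proposal is correct and follows essentially the same route as the paper: the dissipation identity for $F$ (equivalent to the paper's exponential formula \eqref{eq: estFundamental}), finiteness of $\int_0^\infty\|\partial_t u\|_{L^2}^2\,dt$, extraction of times with $\partial_t u(t_j)\to 0$, compactness (Rellich on bounded domains, Strauss for radial data, with the lower bound on $\|u\|_{L^{2\sigma+2}}$ coming from Corollary \ref{cor: boundness} rather than ``Sobolev embedding'') to pass to the limit in the equation and upgrade to strong $H^1$ convergence, and a LaSalle-type argument forcing $F$ to be constant along trajectories issued from $\omega$-limit points. Your two deviations are cosmetic and valid: the resolvent isomorphism $(-\Delta+\omega)^{-1}:H^{-1}\to H^1_0$ replaces the paper's argument via convergence of $\mu$ and norm convergence, and your invariance step invokes $H^1_0$ continuous dependence at exactly the spot where the paper's corollary implicitly uses it (the assertion $u(t_n+\tau_n)\to w$), so this shared soft point is no worse in your write-up than in the original.
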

After establishing the asymptotic convergence to a stationary state on a subsequence of times, two fundamental questions emerge naturally. The first question pertains to the existence of criteria on the initial data that ensure the stationary state corresponds to the ground state, which is defined as the unique positive solution to equation \eqref{eq: stst} (up to space translations in $\R^d$). We denote the set of all ground states with fixed $L^2$ norm and satisfying \eqref{eq: stst} as $G_{u_0,\omega}$. By exploiting the maximum principle and uniqueness of the ground state, we show the following. 
\begin{theorem}
   Under the hypothesis of Theorem \ref{thm: existence}, let $\omega >0$ and $\Omega = \{x \in \R^d \, : \, |x| < R\}$ for some $R >0$ and $u_0 >0$. Then
    \begin{equation*}
        \lim_{t \to \infty} \| u(t) - Q_{gs}\|_{H^1} = 0.
    \end{equation*}
    where $Q_{gs}$ is the unique positive solution to \eqref{eq: stst} with $\| Q_{gs} \|_{L^2} = \| u_0 \|_{L^2}$. 
\end{theorem}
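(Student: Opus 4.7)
The overall plan is to exploit three ingredients: preservation of positivity along the flow \eqref{eq: flow}, the conclusion of the previous theorem that $\emptyset \neq W(u_0) \subset S_{u_0,\omega}$, and the uniqueness of positive solutions to the elliptic equation \eqref{eq: stst} with prescribed $L^2$ mass on a ball. Together these should imply $W(u_0) = \{Q_{gs}\}$, after which full $H^1$ convergence follows from the relative compactness of the orbit.

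First I would establish positivity propagation: if $u_0 > 0$, then $u(t,x) > 0$ for every $t \in [0,\infty)$ and $x \in \Omega$. Since $\mu[u(t)]$ depends only on $t$, along a fixed trajectory equation \eqref{eq: flow} is a linear parabolic equation with lower-order coefficient $c(t,x) = -\omega + \mu[u(t)]\,|u(t,x)|^{2\sigma}$. The uniform-in-time $H^1$ bound from the preceding theorem, combined with Sobolev embedding and standard parabolic bootstrap in the subcritical range, places $c$ in a Lebesgue class to which the weak maximum principle applies; together with the Dirichlet boundary condition this yields $u(t) \geq 0$, and the strong maximum principle upgrades this to $u(t) > 0$ in $\Omega$.

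Next, let $Q \in W(u_0)$, chosen via $t_j \to \infty$ with $u(t_j) \to Q$ in $H^1_0(\Omega)$. By a.e.\ convergence along a subsequence, $Q \geq 0$. Conservation of the $L^2$ norm gives $\|Q\|_{L^2} = \|u_0\|_{L^2} > 0$, so $Q \not\equiv 0$. Applying the strong maximum principle to $-\Delta Q + \omega Q = \mu_Q |Q|^{2\sigma} Q$ on the ball then yields $Q > 0$ in $\Omega$. Testing the equation against $Q$ gives
\begin{equation*}
\mu_Q = \frac{\|\nabla Q\|_{L^2}^2 + \omega \|Q\|_{L^2}^2}{\|Q\|_{L^{2\sigma+2}}^{2\sigma+2}},
\end{equation*}
so the Lagrange multiplier is an explicit function of $Q$ alone. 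By Gidas--Ni--Nirenberg the positive solution $Q$ is radial and radially decreasing, and after the natural rescaling that normalizes $\mu_Q$ out of the equation, Kwong's ODE uniqueness theorem implies that the positive solution with prescribed $L^2$ mass is unique. Hence $Q = Q_{gs}$, so every element of $W(u_0)$ coincides with $Q_{gs}$.

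To pass from subsequential to full convergence, I would rely on the relative compactness in $H^1_0(\Omega)$ of the orbit $\{u(t) : t \geq 0\}$, already encoded in the proof that $W(u_0) \neq \emptyset$ through the uniform $H^1$ bound and the monotonicity of the Lyapunov functional $F$ in \eqref{eqFIntr}. Given $W(u_0) = \{Q_{gs}\}$, a standard contradiction argument (any $t_j \to \infty$ admits a further subsequence along which $u(t_j)$ converges in $H^1$, and its limit must be $Q_{gs}$) yields $\|u(t) - Q_{gs}\|_{H^1} \to 0$. The main obstacle I anticipate is the strict-positivity/uniqueness step: one must confirm that the strong maximum principle applies to the elliptic limit (it does, since the limit equation is local with scalar multiplier $\mu_Q$) and that Kwong's uniqueness, combined with the prescribed $L^2$ mass, pins down both $Q$ and its multiplier $\mu_Q$ simultaneously. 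The rest of the argument is largely assembly.
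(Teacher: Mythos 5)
Your first two steps (positivity propagation by the parabolic maximum principle, then identification of any element of $W(u_0)$ with $Q_{gs}$ via nonnegativity of the limit, the strong maximum principle for the elliptic equation \eqref{eq: stst}, and uniqueness of the positive solution with prescribed mass) are exactly the paper's route to producing one sequence $t_n \to \infty$ with $u(t_n) \to Q_{gs}$ in $H^1_0(\Omega)$. The problem is your final step. You upgrade subsequential convergence to full convergence by invoking ``relative compactness in $H^1_0(\Omega)$ of the orbit $\{u(t):t\ge 0\}$,'' claiming it is already encoded in the proof that $W(u_0)\neq\emptyset$. It is not. The uniform bound \eqref{eq: UnifBnd1} only gives boundedness in $H^1$, hence weak-$H^1$ and strong-$L^p$ ($p$ subcritical) compactness of the orbit. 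The strong $H^1$ convergence in Lemma \ref{lem: oneStSt} is obtained only along carefully chosen times: one uses $\int_0^\infty \|\partial_s u\|_{L^2}^2\,ds<\infty$ to pick $t_j$ with $\partial_t u(t_j)\to 0$ in $L^2$, and it is the limiting elliptic equation that converts weak convergence into norm convergence. For an arbitrary sequence $t_j\to\infty$ no such argument is available, so the assertion ``any $t_j\to\infty$ admits a further subsequence converging in $H^1$'' is unproved; establishing it would require uniform-in-time higher regularity (e.g.\ a global $H^2$ or H\"older bound by parabolic smoothing), which neither you nor the paper provides. As stated, your contradiction argument does not close.

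The paper closes this step differently, using the Lyapunov structure and the variational characterization of the ground state rather than orbit precompactness. Since $F$ in \eqref{eqFIntr} is continuous and nonincreasing by \eqref{eq: estFundamental}, the convergence $F[u(t_n)]\to F[Q_{gs}]$ along the special sequence forces $\lim_{t\to\infty}F[u(t)]=F[Q_{gs}]$, and by Proposition \ref{lemMinBND} together with the positivity and uniqueness arguments, $F[Q_{gs}]=f_m$ is the minimum of $F$ under the constraint $\|v\|_{L^2}^2=\|u_0\|_{L^2}^2$. Then, for an arbitrary sequence $t_k\to\infty$ staying $\varepsilon$-away from $Q_{gs}$, one extracts a weak $H^1$ limit $v$; the compact embeddings $H^1_0(\Omega)\hookrightarrow L^2\cap L^{2\sigma+2}$ give $\|v\|_{L^2}=\|u_0\|_{L^2}$ and convergence of the $L^{2\sigma+2}$ norms, and weak lower semicontinuity yields $F[v]\le \lim_k F[u(t_k)]=f_m$, hence $F[v]=f_m$ by minimality. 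This forces $\|\nabla u(t_k)\|_{L^2}\to\|\nabla v\|_{L^2}$, i.e.\ strong $H^1$ convergence, and the limit, being a nonnegative constrained minimizer, is $Q_{gs}$ — a contradiction. If you want to keep your ``singleton $\omega$-limit set plus compact orbit'' scheme instead, you must first prove the $H^1$ precompactness of the orbit (a genuine additional smoothing estimate); otherwise you should replace that step by the minimality argument above.
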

The second question revolves around determining whether the $\omega$-limit set, which characterizes the long-term behavior of the solution, is a singleton. This entails investigating the properties of the linearized operator around a stationary state and showing its dynamical stability. Regrettably, at present, we are unable to provide any definitive conclusions, leaving this as an unresolved issue. 

It is worth noting that a comparable analysis can be conducted on alternative models, such as the Navier-Stokes equations. For example, in \cite{CaPuRo09, brzezniak20182d}, the authors have explored the behavior of the two-dimensional system under different constraints. Furthermore, the constraint concerning the $L^2$ norm of the solution has been investigated in the context of the complex Ginzburg-Landau type equation, as discussed in \cite{AnSh23}. This equation serves to model the presence of a reservoir for the mass of a condensate.

The structure of this paper is outlined as follows: Section \ref{secPrelimin} introduces some pertinent preliminaries concerning the heat semigroup generated by the Laplacian on $L^2(\Omega)$ and the stationary states of system \eqref{eq: flow}. In Section \ref{secExist}, we establish the local and global well-posedness of the model. Subsequently, Section \ref{secAsympt} is dedicated to examining the asymptotic behavior.

\section{Preliminaries} \label{secPrelimin}
For any $p \geq 1 $, we denote by $p' \in \R$ the $
 1 = 1/{p} + 1/{p'} $ with $p' = \infty$ 
We use the symbol $\lesssim$ as in $a \lesssim b$ to intend that there exists a constant $C>0$, such that $a \leq Cb$.

We use the Gagliardo-Nirenberg inequality (see \cite[Theorem $1.3.7$]{Ca03}.
\begin{prop}
     For any $f \in H^1(\Omega)$ and any $0 \leq \sigma < \frac{2}{(d-2)^+}$, there exists $C_{GN}(d, \sigma, \Omega) >0$ such that 
    \begin{equation}\label{eq: GN}
        \| f \|_{L^{2\sigma + 2}} \leq C_{GN} \| f\|_{L^2}^{1 - \frac{d\sigma}{2\sigma + 2}} \| \nabla f \|_{L^2}^\frac{d\sigma}{2\sigma + 2}.
    \end{equation}
\end{prop}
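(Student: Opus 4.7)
The plan is to reduce to the whole-space case $\Omega = \R^d$ via extension, and then on $\R^d$ combine the Sobolev embedding with Hölder interpolation between two Lebesgue spaces. A quick scaling check on $f_\lambda(x) = f(\lambda x)$ fixes the exponent $\theta = \frac{d\sigma}{2\sigma+2}$ as the unique choice for which both sides of \eqref{eq: GN} have matching homogeneity in $\lambda$, so any inequality of the claimed form must have this exponent.

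First I would treat $\Omega = \R^d$ with $d \geq 3$. The Sobolev embedding gives $\|f\|_{L^{2^*}(\R^d)} \leq C \|\nabla f\|_{L^2(\R^d)}$ with $2^* = \frac{2d}{d-2}$. The hypothesis $0 \leq \sigma < \frac{2}{d-2}$ is exactly the statement that $2 \leq 2\sigma+2 < 2^*$, so I can solve $\frac{1}{2\sigma+2} = \frac{1-\theta}{2} + \frac{\theta}{2^*}$ and obtain $\theta = \frac{d\sigma}{2\sigma+2}$. Hölder's inequality then yields $\|f\|_{L^{2\sigma+2}} \leq \|f\|_{L^2}^{1-\theta}\|f\|_{L^{2^*}}^{\theta}$, and inserting the Sobolev bound gives the desired estimate. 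For $d \leq 2$ I would argue identically, replacing the critical exponent $2^*$ by any fixed large $p < \infty$ given by $H^1(\R^d) \hookrightarrow L^p(\R^d)$ (or $p = \infty$ when $d = 1$), which is permissible since no upper bound on $\sigma$ is imposed in these dimensions.

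To pass to a bounded domain $\Omega$ with $C^2$ boundary, I would use a standard Stein-type extension operator $E : H^1(\Omega) \to H^1(\R^d)$ with $\|Ef\|_{H^1(\R^d)} \leq C(\Omega) \|f\|_{H^1(\Omega)}$, apply the whole-space inequality to $Ef$, and restrict back to $\Omega$. The technical nuisance here — and what I expect to be the main obstacle — is that the naive extension controls only the full $H^1$ norm of $Ef$, which would produce a right-hand side involving $\|f\|_{H^1(\Omega)}$ rather than the two seminorms separately as stated in \eqref{eq: GN}. To obtain the homogeneous form, one either has to refine the extension operator so as to control $\|E f\|_{L^2}$ and $\|\nabla Ef\|_{L^2}$ individually by the corresponding norms of $f$ (possible on $C^2$ bounded domains via a partition of unity and local reflection), or invoke the sharp version stated in Cazenave's book and cited above. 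In the $H^1_0(\Omega)$ setting relevant to \eqref{eq: flow}, extension by zero gives both identities $\|Ef\|_{L^2(\R^d)} = \|f\|_{L^2(\Omega)}$ and $\|\nabla Ef\|_{L^2(\R^d)} = \|\nabla f\|_{L^2(\Omega)}$ for free, so no such refinement is needed and the whole-space argument transfers directly.
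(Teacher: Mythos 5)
The paper offers no proof of this proposition beyond the citation to Cazenave, so the only question is whether your argument is complete. For $d\geq 3$ on $\R^d$ it is: $2\leq 2\sigma+2<2^*$ is exactly the subcriticality hypothesis, the interpolation exponent comes out to $\frac{d\sigma}{2\sigma+2}$, and inserting $\|f\|_{L^{2^*}}\lesssim\|\nabla f\|_{L^2}$ gives \eqref{eq: GN}; the observation that extension by zero handles the $H^1_0(\Omega)$ case (the only case the paper actually uses) is also correct. The gap is in $d\leq 2$: you cannot ``argue identically,'' because in these dimensions there is no embedding of the form $\|f\|_{L^p(\R^d)}\lesssim\|\nabla f\|_{L^2(\R^d)}$ for any $p$ (by scaling, $\|\nabla f\|_{L^2}$ is invariant in $d=2$ and scales the wrong way in $d=1$). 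The embedding you invoke, $H^1\hookrightarrow L^p$, only yields $\|f\|_{L^{2\sigma+2}}\lesssim\|f\|_{L^2}^{1-\theta'}\|f\|_{H^1}^{\theta'}$, which is inhomogeneous and, for finite $p$, carries an exponent $\theta'\neq\frac{d\sigma}{2\sigma+2}$. The fix is standard and you already hold the key ingredient: instead of using scaling merely as a consistency check, apply the inhomogeneous bound $\|f\|_{L^{2\sigma+2}}\leq C\left(\|f\|_{L^2}+\|\nabla f\|_{L^2}\right)$ to $f_\lambda(x)=f(\lambda x)$ and optimize over $\lambda$ (equivalently, use the homogeneous endpoint estimates $\|f\|_{L^\infty}^2\leq 2\|f\|_{L^2}\|f'\|_{L^2}$ in $d=1$ and Ladyzhenskaya's inequality in $d=2$); this produces exactly the homogeneous form with exponent $\frac{d\sigma}{2\sigma+2}$.

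A second, smaller point: the ``refinement'' you suggest for general $H^1(\Omega)$ on a bounded domain, namely an extension operator with $\|\nabla Ef\|_{L^2(\R^d)}\lesssim\|\nabla f\|_{L^2(\Omega)}$, cannot exist — take $f$ constant, for which the right-hand side vanishes while any nonzero $H^1(\R^d)$ extension has nonzero gradient. Indeed \eqref{eq: GN} with the purely homogeneous right-hand side is false for constants in $H^1(\Omega)$ when $\sigma>0$, so no proof strategy can recover it in that generality; the statement should be read, as the cited reference states it, on $\R^d$ (hence on $H^1_0(\Omega)$ by zero extension), which is precisely how the paper uses it. Your last sentence already identifies this resolution, so the bounded-domain part of your argument stands once the misleading ``local reflection'' remark is dropped.
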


In bounded domains, the following lower bound on the $L^{2\sigma + 2}$-norm holds by H\"older's inequality.

\begin{prop}
    Let $\Omega \subset \R^d$ be bounded. Then, for any $\sigma > 0$ and $u \in L^{2\sigma + 2}(\Omega)$, 
    \begin{equation}\label{eq: UnifBnPot}
        \| u \|_{L^{2\sigma + 2}} \geq \frac{\| u \|_{L^2}}{Vol(\Omega)^{\frac{\sigma}{2\sigma + 2}}}.
    \end{equation}
\end{prop}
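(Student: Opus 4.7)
The plan is to apply H\"older's inequality to the constant function $1$ and to $|u|^2$, using that $\Omega$ has finite measure. More precisely, I would write
\begin{equation*}
    \| u \|_{L^2}^2 = \int_\Omega |u(x)|^2 \cdot 1 \, dx
\end{equation*}
and apply H\"older with conjugate exponents $p = \sigma + 1$ and $p' = \frac{\sigma+1}{\sigma}$, so that $|u|^2$ lands in $L^{\sigma+1}(\Omega)$ (giving the $L^{2\sigma+2}$ norm of $u$ squared) and the constant function $1$ lands in $L^{(\sigma+1)/\sigma}(\Omega)$, yielding the factor $Vol(\Omega)^{\sigma/(\sigma+1)}$.

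Carrying out this step gives
\begin{equation*}
    \| u \|_{L^2}^2 \leq \| u \|_{L^{2\sigma+2}}^2 \, Vol(\Omega)^{\frac{\sigma}{\sigma+1}},
\end{equation*}
and taking the square root and rearranging produces exactly \eqref{eq: UnifBnPot}. The choice $p = \sigma + 1$ is forced by matching $2p = 2\sigma + 2$, so there is essentially no decision to make.

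There is no real obstacle here; the statement is a standard embedding fact for bounded domains and the only thing to verify is that the exponent $\frac{\sigma}{2\sigma+2}$ agrees with $\frac{1}{2} \cdot \frac{\sigma}{\sigma+1}$, which it does. The hypothesis $\sigma > 0$ is needed only to ensure that $2\sigma + 2 > 2$ so that H\"older is being used in the nontrivial direction; for $\sigma = 0$ the inequality is a trivial equality.
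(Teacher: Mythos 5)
Your argument is correct and is precisely the paper's intended proof: the paper states the bound "by H\"older's inequality" without further detail, and your application of H\"older to $\int_\Omega |u|^2\cdot 1\,dx$ with exponents $\sigma+1$ and $\frac{\sigma+1}{\sigma}$ is exactly that computation, with the exponent bookkeeping $\frac{1}{2}\cdot\frac{\sigma}{\sigma+1}=\frac{\sigma}{2\sigma+2}$ checked correctly. Nothing to add.
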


\subsection{Heat semigroup}\label{sec: semig}

We recall the properties of the Dirichlet heat semigroup, see \cite{QuSo19} for more details. 
Let $\Omega$ is always a bounded domain with $C^2$ boundary or $\Omega \equiv \R^d$. We denote by $e^{t\Delta}$ the semigroup associated with the Dirichlet Laplacian on $L^2(\Omega)$ and $L^2(\R^d)$. The function $u = e^{t\Delta} u_0$ is the unique solution to 
\begin{equation}\label{eq: linheat}
\begin{cases}
			 \partial_t u = \Delta u, \ \mbox{in} \ \ (0,\infty) \times \Omega, \\
			 u_{|\partial\Omega} = 0,\\
		u(0) = u_0 \in H^1_0(\Omega).
		\end{cases}
\end{equation} 
The following $L^p-L^q$ smoothing estimates hold \cite[Proposition 48.4]{QuSo19}.
\begin{prop}\label{prp: lplq}
For any $1 \leq q \leq p \leq \infty$, $t > 0$ and $f \in L^q(\Omega)$, 
 \begin{equation}\label{eq: SmoothEst}
 \left\| e^{t\Delta} f \right\|_{L^p} \leq \frac{1}{t^{\frac{d}{2} \left(\frac 1 p - \frac 1 q\right)}} \| f\|_{L^q}.
 \end{equation}
\end{prop}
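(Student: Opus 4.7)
The plan is to prove the estimate in two steps: first on $\R^d$ using the explicit Gaussian heat kernel, then extend to a bounded domain $\Omega$ via pointwise comparison of the Dirichlet heat kernel with the free one through the parabolic maximum principle.

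On $\R^d$, the semigroup acts by convolution $e^{t\Delta}f = G_t * f$ with the Gaussian kernel $G_t(x) = (4\pi t)^{-d/2}e^{-|x|^2/(4t)}$. For $1 \leq q \leq p \leq \infty$, I would apply Young's convolution inequality with exponent $r \in [1,\infty]$ determined by $1 + \tfrac{1}{p} = \tfrac{1}{r} + \tfrac{1}{q}$, obtaining $\|G_t * f\|_{L^p} \leq \|G_t\|_{L^r}\|f\|_{L^q}$. A direct integration yields
\[
\|G_t\|_{L^r} = C(d,r)\, t^{-\frac{d}{2}\left(1-\frac{1}{r}\right)} = C(d,p,q)\, t^{-\frac{d}{2}\left(\frac{1}{q}-\frac{1}{p}\right)},
\]
using the substitution $y = x/\sqrt{t}$. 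This gives the claimed bound with the scaling exponent, and one recovers the endpoint cases $L^1 \to L^\infty$ and $L^p \to L^p$ as special instances. An alternative route is to prove only these two endpoints (the $L^p$ contractivity follows from $\|G_t\|_{L^1}=1$; the $L^1 \to L^\infty$ bound from $\|G_t\|_{L^\infty} = (4\pi t)^{-d/2}$) and then interpolate via Riesz--Thorin.

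To extend the estimate to a bounded $\Omega$ with $C^2$ boundary, I would use the pointwise comparison
\[
0 \leq p_t^\Omega(x,y) \leq G_t(x-y), \qquad x,y \in \Omega,\, t > 0,
\]
where $p_t^\Omega$ denotes the Dirichlet heat kernel of $\Omega$. This follows by fixing $y \in \Omega$ and applying the parabolic maximum principle to the difference $(t,x) \mapsto G_t(x-y) - p_t^\Omega(x,y)$: it solves the heat equation on $(0,\infty)\times\Omega$, its initial data is zero (both kernels approach the Dirac mass at $y$), and on the lateral boundary it is non-negative since $p_t^\Omega \equiv 0$ while $G_t \geq 0$. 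Hence
\[
|e^{t\Delta}f(x)| \leq \int_\Omega p_t^\Omega(x,y)|f(y)|\, dy \leq (G_t * \widetilde{|f|})(x),
\]
where $\widetilde{|f|}$ is the zero extension of $|f|$ to $\R^d$, and the $\R^d$ estimate transfers directly with the same constant.

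The main obstacle I anticipate is the rigorous justification of the kernel domination argument: one must verify that $p_t^\Omega$ is a well-defined non-negative kernel representing $e^{t\Delta}$ on all $L^q(\Omega)$ with $1 \leq q \leq \infty$, and that the $C^2$ boundary regularity is sufficient to invoke the parabolic maximum principle up to the boundary for the difference of kernels. Once this is settled, the remainder of the argument is purely computational and depends only on the explicit form of $G_t$.
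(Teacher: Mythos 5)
The paper does not actually prove this proposition; it only cites \cite[Proposition 48.4]{QuSo19}, so there is no in-paper argument to match, and your proposal is essentially the standard proof underlying that citation and is correct in outline. Two points to tighten. First, justifying $0\le p_t^\Omega(x,y)\le G_t(x-y)$ by applying the maximum principle to the difference of the two kernels is delicate at $t=0$ (the ``zero initial data'' is really a cancellation of two Dirac masses, so one needs an $\varepsilon$-shift in time or a mollification); it is cleaner to compare solutions rather than kernels: for $0\le f\in C_c^\infty(\Omega)$ the function $w(t,x)=\big(G_t*\tilde f\big)(x)-\big(e^{t\Delta_\Omega}f\big)(x)$ is caloric in $(0,\infty)\times\Omega$, continuous up to the parabolic boundary (here the $C^2$ boundary suffices), vanishes at $t=0$ and is nonnegative on $\partial\Omega$, hence $w\ge 0$; by density and positivity of the semigroup this gives $|e^{t\Delta_\Omega}f|\le e^{t\Delta_\Omega}|f|\le G_t*\widetilde{|f|}$ for all $f\in L^q(\Omega)$, and the $\R^d$ bound transfers with the same constant. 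Second, if you want the constant exactly $1$ as stated, note that with $1-\frac1r=\frac1q-\frac1p$ one has $\|G_t\|_{L^r}=(4\pi t)^{-\frac d2\left(1-\frac1r\right)}r^{-\frac d{2r}}\le t^{-\frac d2\left(\frac1q-\frac1p\right)}$, so no unspecified $C(d,p,q)$ is needed; also observe that your Young-inequality computation produces the power $t^{-\frac d2\left(\frac1q-\frac1p\right)}$, which blows up as $t\to0^+$ when $q<p$, whereas the exponent as typeset in the proposition has the roles of $p$ and $q$ reversed and should be read accordingly.
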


As a consequence, we obtain the following space-time estimates.

\begin{prop}\label{prp: sptime}
 Let $2 \leq q,r \leq \infty$ satisfy
 \begin{equation}\label{eq: qrcon}
 \frac 2 q + \frac d r = \frac d 2
 \end{equation}
 with $(q,r,d) \neq (2,\infty,2)$.
 Then for any $f \in L^2$, we have
 \begin{equation}\label{eq: sptime1}
 \left\| e^{t\Delta} f \right\|_{L^q((0,\infty),L^r)} \lesssim \| f\|_{L^2}.
 \end{equation}
 Moreover, if $(\rho,\gamma)$ is another pair satisfying condition \eqref{eq: qrcon}, then 
 \begin{equation}\label{eq: sptime2}
 \left\| \int_0^te^{(t - \tau) \Delta} f(\tau) d\tau \right\|_{L^q((0,\infty),L^r)} \lesssim \| f\|_{L^{\rho'}((0,\infty),L^{\gamma'})},
 \end{equation}
 for any $f \in L^{\rho'}((0,\infty),L^{\gamma'})$.
\end{prop}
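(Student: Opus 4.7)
The plan is to establish \eqref{eq: sptime1} via a $TT^*$ argument and \eqref{eq: sptime2} by combining Minkowski's inequality with the one-dimensional Hardy--Littlewood--Sobolev (HLS) inequality, both driven by the smoothing estimate from Proposition \ref{prp: lplq}.

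For the homogeneous estimate, consider the operator $Tf(t) = e^{t\Delta}f$ regarded as a map $L^2_x \to L^q_t L^r_x$. Since $e^{t\Delta}$ is self-adjoint on $L^2$, its adjoint is $T^*g = \int_0^\infty e^{t\Delta} g(t)\, dt$, and by duality \eqref{eq: sptime1} is equivalent to $\|T^*g\|_{L^2}^2 \lesssim \|g\|_{L^{q'}_t L^{r'}_x}^2$. Expanding the square and using self-adjointness once more yields
\[
\|T^*g\|_{L^2}^2 = \iint_{(0,\infty)^2} \bigl\langle e^{(s+t)\Delta}g(s),\,g(t)\bigr\rangle_{L^2_x}\, ds\, dt.
\]
H\"older in $x$ combined with Proposition \ref{prp: lplq} (applied with exponents $p=r$, $q=r'$) gives $|\langle e^{(s+t)\Delta}g(s),g(t)\rangle| \lesssim (s+t)^{-2/q}\|g(s)\|_{L^{r'}_x}\|g(t)\|_{L^{r'}_x}$, where the exponent $\tfrac{2}{q} = \tfrac{d}{2}(\tfrac{1}{r'}-\tfrac{1}{r})$ follows directly from \eqref{eq: qrcon}. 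Setting $h(t) = \|g(t)\|_{L^{r'}_x}$, the task reduces to controlling $\iint_{(0,\infty)^2}(s+t)^{-2/q}h(s)h(t)\, ds\, dt$. For $2 < q < \infty$, one bounds $(s+t)^{-2/q}\leq|s-t|^{-2/q}$ and invokes the one-dimensional HLS inequality with exponent $2/q \in (0,1)$ to dominate this by $\|h\|_{L^{q'}}^2$; the endpoint $q=2$ (which is admissible only when $d\geq 3$) is instead covered by the classical Hilbert inequality for the kernel $(s+t)^{-1}$, while $q=\infty$ is trivial from the $L^2$-contraction property of $e^{t\Delta}$.

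For the inhomogeneous estimate, Minkowski's inequality in time followed by Proposition \ref{prp: lplq} with exponents $(r,\gamma')$ yields
\[
\Big\|\int_0^t e^{(t-\tau)\Delta} f(\tau)\, d\tau\Big\|_{L^r_x} \leq \int_0^t (t-\tau)^{-\alpha}\|f(\tau)\|_{L^{\gamma'}_x}\, d\tau,\qquad \alpha = \tfrac{d}{2}\bigl(\tfrac{1}{\gamma'}-\tfrac{1}{r}\bigr).
\]
A short computation using both admissibility relations for $(q,r)$ and $(\rho,\gamma)$ shows $\alpha = \tfrac{1}{q}+\tfrac{1}{\rho}$, which is precisely the HLS exponent for the fractional integration operator $L^{\rho'}_t(0,\infty)\to L^q_t(0,\infty)$; this immediately delivers \eqref{eq: sptime2}.

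The main subtlety I anticipate is the endpoint $\alpha=1$, equivalently $q=\rho=2$ in dimension $d\geq 3$, for which the standard HLS inequality fails. That case would require either the Christ--Kiselev lemma (reducing the retarded convolution to the non-retarded one, which is then controlled by the $TT^*$ argument of the homogeneous step) or a direct Hilbert-type inequality after symmetrizing $(t-\tau)^{-1}$ against $(t+\tau)^{-1}$. Since the applications in Section \ref{secExist} only invoke non-endpoint admissible pairs, this borderline situation is essentially cosmetic and can be set aside here.
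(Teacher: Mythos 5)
Your scheme is the standard one and matches what the paper implicitly relies on: Proposition \ref{prp: sptime} is stated there without proof, as a direct consequence of the smoothing estimate of Proposition \ref{prp: lplq}, and your $TT^*$ reduction with the Hardy--Littlewood--Sobolev inequality (plus Hilbert's inequality for the kernel $(s+t)^{-1}$ at the homogeneous endpoint $q=2$) is exactly the classical way to carry out that deduction; the exponent bookkeeping $\frac d2\bigl(\frac1{r'}-\frac1r\bigr)=\frac2q$ and $\alpha=\frac1q+\frac1\rho$ is correct.

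Two caveats, one of which is a genuine gap. First, a minor one: HLS for the fractional integral in time requires $1<\rho'$ and $q<\infty$, so the admissible cases $\rho=\infty$ (i.e.\ $\gamma=2$) and $q=\infty$ (i.e.\ $r=2$) are not literally covered by your concluding one-liner; they are recovered easily, by Minkowski in $\tau$ together with the homogeneous bound \eqref{eq: sptime1} applied to $e^{(t-\tau)\Delta}f(\tau)$ in the first case, and by dualizing in $x$ for fixed $t$ and using \eqref{eq: sptime1} for the adjoint in the second. Second, and more substantively: the statement as written does include the double endpoint $q=\rho=2$, $r=\gamma=\tfrac{2d}{d-2}$, $d\ge3$, and neither of the repairs you sketch works there. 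The Christ--Kiselev lemma requires $\rho'<q$, whereas at this point $\rho'=q=2$; and the retarded kernel $(t-\tau)^{-1}\mathbf{1}_{\{\tau<t\}}$ is not dominated by $(t+\tau)^{-1}$, nor is the one-dimensional bilinear form with kernel $|t-\tau|^{-1}$ bounded on $L^2\times L^2$, so no argument using only the pointwise smoothing bound can close this case. It is nevertheless true and easy for the heat flow: either invoke the Keel--Tao endpoint theorem, whose hypotheses follow from the dispersive bound $\|e^{t\Delta}\|_{L^1\to L^\infty}\lesssim t^{-d/2}$, or note that $u(t)=\int_0^t e^{(t-\tau)\Delta}f(\tau)\,d\tau$ solves $\partial_t u-\Delta u=f$ with $u(0)=0$, so the energy identity gives $\|\nabla u\|_{L^2((0,\infty),L^2)}\lesssim\|f\|_{L^2((0,\infty),H^{-1})}$, and \eqref{eq: sptime2} at the double endpoint follows from the Sobolev embeddings $H^1_0\hookrightarrow L^{2d/(d-2)}$ and $L^{2d/(d+2)}\hookrightarrow H^{-1}$. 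Alternatively, since (as you observe) Section \ref{secExist} only uses non-endpoint pairs, that pair could simply be excluded from the statement.
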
 

The next proposition presents a gradient smoothing estimate, as outlined in \cite[Theorem 16.3]{LaSoUr88}.

\begin{prop} \label{prop: smooth_par}
There exists a constant $C >0$ such that for any $F \in L^2([0,\infty), L^2(\Omega))$
 \begin{equation} \label{eq: smooth}
 \left\|\nabla \int_0^te^{(t-s)\Delta}F(s)\,ds\right\|_{L^\infty([0,\infty),L^2)}\leq C\|F\|_{L^2([0,\infty),L^2)}.
 \end{equation}
\end{prop}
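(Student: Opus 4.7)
The plan is to recognise $v(t) := \int_0^t e^{(t-s)\Delta}F(s)\,ds$ as the Duhamel mild solution of the inhomogeneous Dirichlet heat problem
\begin{equation*}
\partial_t v = \Delta v + F, \qquad v_{|\partial\Omega}=0, \qquad v(0)=0,
\end{equation*}
and then to bound $\|\nabla v\|_{L^\infty_t L^2_x}$ by a single energy identity. The quantity $\tfrac12\|\nabla v\|_{L^2}^2$ that we wish to control points unambiguously to the multiplier $-\Delta v$: testing against it reproduces $\tfrac12 \tfrac{d}{dt}\|\nabla v\|_{L^2}^2$ on the left-hand side and, crucially, generates a dissipation term $\|\Delta v\|_{L^2}^2$ with enough room to absorb the forcing via Young's inequality.

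Executing this, integration by parts (legitimate thanks to the Dirichlet condition) together with Cauchy--Schwarz and Young yields
\begin{equation*}
\tfrac12 \tfrac{d}{dt}\|\nabla v(t)\|_{L^2}^2 + \|\Delta v(t)\|_{L^2}^2 \;=\; -\int_\Omega F(t)\,\Delta v(t)\,dx \;\leq\; \tfrac12\|F(t)\|_{L^2}^2 + \tfrac12\|\Delta v(t)\|_{L^2}^2.
\end{equation*}
Absorbing $\tfrac12\|\Delta v\|_{L^2}^2$ and integrating in time from $0$ to $t$, with $v(0)=0$, gives
\begin{equation*}
\|\nabla v(t)\|_{L^2}^2 + \int_0^t \|\Delta v(s)\|_{L^2}^2\,ds \;\leq\; \int_0^t \|F(s)\|_{L^2}^2\,ds \;\leq\; \|F\|_{L^2([0,\infty),L^2)}^2,
\end{equation*}
and taking the supremum over $t$ on the left delivers the claim with $C=1$, together with the bonus maximal $L^2$-regularity bound $\|\Delta v\|_{L^2_t L^2_x}\leq \|F\|_{L^2_t L^2_x}$.

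The only substantive obstacle is regularity: for a generic $F\in L^2_t L^2_x$ the mild solution $v$ need not a priori lie in $H^2\cap H^1_0$ for a.e.\ $t$, so testing against $\Delta v(t)$ is not automatically justified. I would overcome this by a standard density argument: approximate $F$ by a sequence $F_n\in C^\infty_c((0,\infty)\times\Omega)$ converging to $F$ in $L^2_t L^2_x$; for each $F_n$ the associated $v_n$ is smooth (by classical parabolic regularity) and the energy identity above holds rigorously; the estimate obtained is uniform in $n$, so the linearity and continuity of the map $F\mapsto v$ transfer the bound to $v$ itself. Two alternative routes exist as sanity checks: on $\R^d$ one can differentiate under the integral using the explicit Gaussian kernel, and on a bounded $C^2$ domain one can diagonalise in the Dirichlet eigenbasis $(\phi_k,\lambda_k)$, reducing the estimate to a mode-by-mode Cauchy--Schwarz bound on $v_k(t)=\int_0^t e^{-\lambda_k(t-s)}f_k(s)\,ds$ followed by summation in $k$.
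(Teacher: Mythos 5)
Your argument is correct, but it is worth noting that the paper does not prove this proposition at all: it is quoted as a known result, with a citation to Ladyzhenskaya--Solonnikov--Uraltseva (Theorem 16.3 of that reference), so your proposal supplies a self-contained proof where the paper relies on the literature. Your route is the standard energy/maximal-regularity computation: identify $v(t)=\int_0^t e^{(t-s)\Delta}F(s)\,ds$ as the solution of $\partial_t v=\Delta v+F$ with $v(0)=0$, test with $-\Delta v$, and absorb $\int_\Omega F\,\Delta v\,dx$ by Young's inequality to get $\|\nabla v(t)\|_{L^2}^2+\int_0^t\|\Delta v\|_{L^2}^2\,ds\le\|F\|_{L^2_tL^2_x}^2$, which even yields the explicit constant $C=1$ and the extra bound on $\Delta v$ in $L^2_tL^2_x$. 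The identity and the absorption are both right, and you correctly flag the only delicate point, namely that testing with $\Delta v(t)$ requires $v(t)\in H^2\cap H^1_0$, which you repair by approximating $F$ in $L^2_tL^2_x$ by smooth compactly supported forcings and passing to the limit (pointwise in $t$ convergence of $v_n(t)$ in $L^2$ plus weak lower semicontinuity of the gradient norm makes the transfer of the uniform bound rigorous); on a merely $C^2$ domain one should say "$H^2$ regularity" rather than smoothness for $v_n$, but that is all the argument needs. Your spectral sanity check is also sound and in fact bypasses the regularity issue entirely: with $v_k(t)=\int_0^t e^{-\lambda_k(t-s)}f_k(s)\,ds$, Cauchy--Schwarz gives $\lambda_k|v_k(t)|^2\le\tfrac12\int_0^t|f_k|^2\,ds$, and summing in $k$ gives the estimate with constant $1/\sqrt2$; the Fourier analogue handles $\Omega=\R^d$, for which the proposition is also used in the paper. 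In short, the proposal is a correct and more elementary replacement for the paper's citation.
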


\subsection{Stationary states.} 

We revisit certain findings regarding the stationary solutions of equation \eqref{eq: flow}, which are solutions to 
\begin{equation}\label{eq:st_st}
	0 = \Delta Q - \omega Q + \mu_Q|Q|^{2\sigma}Q,
\end{equation}
where $\mu_Q \in \R$ is given by 
\begin{equation*}
    \mu_Q = \frac{\| \nabla Q\|_{L^2}^2 + \omega \|  Q\|_{L^2}^2}{\|Q\|_{L^{2\sigma + 2}}^{2\sigma + 2}}.
\end{equation*}
We call ground state a strictly positive solution to \eqref{eq:st_st}. 
We recall \cite[Theorem $1$]{GiNiNi79} and \cite{Kw89}.
	\begin{prop}\label{prp:gs1}
		Let $\Omega = \{ x\in \R^d\, : \, |x| < R\}$ for some $R >0$ or $\Omega = \R^d$. For any $\omega >0$, there exists a unique positive solution $Q_{gs} \in H^1(\Omega)$ to \eqref{eq:st_st}. It is radially symmetric, strictly positive and strictly decreasing in $|x|$.
	\end{prop}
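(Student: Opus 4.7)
The plan is to reduce the nonlocal elliptic equation \eqref{eq:st_st} to the standard scalar field equation $-\Delta \tilde Q + \omega \tilde Q = |\tilde Q|^{2\sigma}\tilde Q$ and then invoke the cited results of Gidas--Ni--Nirenberg and Kwong. First, the rescaling $\tilde Q(x) := \mu_Q^{1/(2\sigma)}Q(x)$ sends any positive solution of \eqref{eq:st_st} (with its self-consistent $\mu_Q > 0$) to a positive solution of the normalized equation; conversely, any such $\tilde Q$ produces a one-parameter family $Q = \lambda \tilde Q$, $\lambda > 0$, of positive solutions of \eqref{eq:st_st}. Uniqueness in the statement is therefore to be read up to this scaling, equivalently for a prescribed $L^2$ mass as in \eqref{eq: statStates1}.

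Existence of a positive solution to the normalized problem would follow from minimizing $\|\nabla v\|_{L^2}^2 + \omega \|v\|_{L^2}^2$ on the constraint $\{v \in H^1_0(\Omega) : \|v\|_{L^{2\sigma + 2}} = 1\}$. The Gagliardo--Nirenberg inequality \eqref{eq: GN} together with $\omega > 0$ ensures the functional is coercive with strictly positive infimum, while the Rellich--Kondrachov compact embedding $H^1_0(\Omega) \hookrightarrow L^{2\sigma+2}(\Omega)$ on the ball, or the Strauss compact embedding $H^1_{rad}(\R^d) \hookrightarrow L^{2\sigma+2}(\R^d)$ for $d \geq 2$ on the whole space, delivers a minimizer $v_*$. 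Replacing $v_*$ by $|v_*|$ is admissible since it preserves all norms and does not increase the Dirichlet energy, producing a nonnegative critical point, which is then upgraded to a strictly positive one by applying the strong maximum principle to the linear equation $-\Delta v_* + (\omega - v_*^{2\sigma}) v_* = 0$ with locally bounded coefficients.

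Given a positive solution, radial symmetry and strict monotonicity in $|x|$ are imported directly from the moving plane method of \cite{GiNiNi79}, which applies both to the $C^2$ ball and, using the fact that $H^1$ solutions of the normalized elliptic equation decay at infinity (a bootstrap of elliptic regularity), to $\R^d$. Finally, uniqueness of the positive radial solution is exactly the content of Kwong's theorem \cite{Kw89}: after reduction to the ODE $-Q'' - \tfrac{d-1}{r}Q' + \omega Q = Q^{2\sigma + 1}$ with Dirichlet condition at $r = R$ or decay at $r = \infty$, a careful shooting/Pohozaev analysis monitoring the number of sign changes yields a single admissible initial value.

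I expect the only genuinely hard step, were one to reprove everything from scratch, to be Kwong's uniqueness; the scaling reduction, the variational construction, the strong maximum principle, and the moving plane symmetrization are all routine consequences of the tools already introduced in Section \ref{secPrelimin}. Since we are permitted to cite \cite{GiNiNi79,Kw89} directly, the only verifications required are the consistency of the rescaling in the first paragraph and the compactness used in producing the minimizer.
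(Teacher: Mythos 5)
Your proposal is correct and takes essentially the same route as the paper, which states this proposition as a recall of \cite{GiNiNi79} and \cite{Kw89} without proof; your scaling reduction to the normalized equation and the variational construction of a positive solution mirror what the paper does anyway in Proposition \ref{lemMinRad}. Your caveat that uniqueness must be read up to scaling (equivalently at prescribed $L^2$ mass, and up to translation when $\Omega=\R^d$) is the correct reading of the statement, since $\mu_Q$ rescales so that every positive multiple of $Q_{gs}$ again solves \eqref{eq:st_st}.
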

	Now we show that there exists a minimum of the variational problem 
 \begin{equation}
     \label{eqMinF}
     f_m  = \inf \{ F[u]\, : \, u \in V, \ \| u \|_{L^2}^2 = m \}
 \end{equation}
where $m >0$, $F$ is defined in \eqref{eqFIntr} and $V$ is either $H^1_{rad}(\R^d)$ or $H^1(\Omega)$ with $\Omega$ being bounded. 

\begin{prop}\label{lemMinRad}
    Let $\omega >0$ and $V = H^1_{rad}(\R^d)$, $d \geq 2$. There exists $u\in H^1_{rad}(\R^d)$ such that $F[u] = f_m$ defined in \eqref{eqMinF}. $u$ satisfies equation \eqref{eq:st_st}, and it is strictly positive up to a change of sign. In particular, $u = Q_{gs}$ or $u = - Q_{gs}$. 
\end{prop}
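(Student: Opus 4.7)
The plan is to apply the direct method of the calculus of variations, using Strauss's radial compactness to cope with the lack of a bounded domain. Let $(u_n)\subset H^1_{rad}(\R^d)$ be a minimizing sequence with $\|u_n\|_{L^2}^2=m$ and $F[u_n]\to f_m$. Since $\omega>0$ the numerator of $F$ is at least $\omega m$, so $F[u_n]\leq f_m+1$ eventually yields
\[
\|\nabla u_n\|_{L^2}^2+\omega m\leq (f_m+1)\|u_n\|_{L^{2\sigma+2}}^2.
\]
Bounding the right-hand side by Gagliardo--Nirenberg \eqref{eq: GN} and using the subcritical exponent $d\sigma/(\sigma+1)<2$ produces a uniform $H^1$ bound on $(u_n)$. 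The Strauss compact embedding $H^1_{rad}(\R^d)\hookrightarrow L^{2\sigma+2}(\R^d)$ (valid for $d\geq 2$ at subcritical exponents) then supplies a subsequence with $u_n\rightharpoonup u$ in $H^1$ and $u_n\to u$ in $L^{2\sigma+2}$. Passing to the limit in $\|u_n\|_{L^{2\sigma+2}}^2\geq \omega m/F[u_n]$ keeps $\|u\|_{L^{2\sigma+2}}^2>0$, so $u\not\equiv 0$.

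Weak lower semicontinuity of the $H^1$-seminorm and strong $L^{2\sigma+2}$-convergence give $F[u]\leq f_m$, but only $\|u\|_{L^2}^2\leq m$. The degree-zero homogeneity $F[\lambda v]=F[v]$ saves the argument: rescaling $\tilde u:=(m/\|u\|_{L^2}^2)^{1/2}\,u$ produces a minimizer on the constraint sphere with $F[\tilde u]=F[u]\leq f_m$. Scale invariance also bypasses the Lagrange multiplier: for any $\phi\in H^1_{rad}$ one can normalize $\tilde u+\epsilon\phi$ back to the sphere and conclude that $\tilde u$ is an unconstrained critical point, i.e.\ $\frac{d}{d\epsilon}F[\tilde u+\epsilon\phi]\big|_{\epsilon=0}=0$. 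A direct computation rearranges this to the stationary equation \eqref{eq:st_st} with $\mu=\mu_{\tilde u}$; radial symmetry of the residual $-\Delta\tilde u+\omega\tilde u-\mu_{\tilde u}|\tilde u|^{2\sigma}\tilde u$ then extends the identity to all test functions via a spherical-harmonics decomposition (equivalently, Schwarz symmetrization shows that the radial minimum equals the free one).

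For the last claim, I exploit the identity $\|\nabla|u|\|_{L^2}=\|\nabla u\|_{L^2}$ valid for real-valued $u\in H^1$ (since $\nabla u=0$ a.e.\ on $\{u=0\}$): $|\tilde u|$ is then a non-negative minimizer solving the same PDE. Elliptic regularity upgrades $\tilde u$ to a continuous function, and the strong maximum principle applied to $|\tilde u|$ yields $|\tilde u|>0$ pointwise; continuity of $\tilde u$ then forbids any sign change, so $\tilde u=\pm|\tilde u|$. The uniqueness of the positive solution in Proposition \ref{prp:gs1} (after the appropriate rescaling to norm $\sqrt m$) identifies $|\tilde u|$ with $Q_{gs}$, giving $\tilde u=\pm Q_{gs}$. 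The main obstacle is the loss of compactness in $\R^d$, sidestepped by the Strauss embedding for radial functions; a secondary subtlety is the possible $L^2$-deficit of the weak limit, which is resolved by the scale invariance of $F$.
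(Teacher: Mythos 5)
Your proposal is correct and follows essentially the same route as the paper: the direct method with the Gagliardo--Nirenberg bound and the compact radial embedding $H^1_{rad}\hookrightarrow L^{2\sigma+2}$, with the Lagrange multiplier eliminated through the degree-zero homogeneity of $F$ (the paper does this explicitly by writing $F'[u]=\tfrac{\lambda}{2}u$ and pairing with $u$ to get $\lambda=0$). The differences are minor: the paper imposes nonnegativity by replacing $u_n$ with $|u_n|$ at the outset rather than passing to $|\tilde u|$ at the end, while you additionally spell out points the paper leaves implicit (the possible $L^2$-deficit of the weak limit handled by rescaling, symmetric criticality to pass from radial to arbitrary test functions, and the maximum-principle argument for strict positivity and the identification with $\pm Q_{gs}$).
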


\begin{proof}
Let $u_n \in H^1_{rad}$ be a minimizing sequence. Since $| u_n | \geq 0$ satisfies $F[|u_n|] \leq F[u_n]$, we assume that $u_n \geq 0$. We show that $\{u_n\}$ is uniformly bounded in $H^1$. Indeed, if 
$$ \limsup_{n \to \infty} \| \nabla u_n \|_{L^2} \to \infty$$
then by \eqref{eq: GN} 
$$ \limsup_{n \to \infty} F[u_n] \geq \limsup_{n \to \infty}\frac{\| \nabla u_n \|_{L^2}^2 + \omega m}{ C_{GN} m^{2 - \frac{d\sigma}{\sigma + 1}} \| \nabla u_n \|_{L^2}^\frac{d\sigma}{\sigma + 1}} \to \infty$$ 
as $d\sigma/(\sigma + 1) < 2$, which is a contradiction. 

Then, up to a subsequence, we have that $    F[u_n] \leq f_m + \eps_n$ where $\eps_n \to 0 $ as $n \to \infty$. Thus, by \eqref{eq: GN}
\begin{equation*}
    \| \nabla u_n \|_{L^2}^2 + \omega \| u_n \|_{L^2}^2 \lesssim ( f_m + \eps_n)  \| u_n \|_{L^2}^{1 - \frac{d\sigma}{\sigma + 1}}\| \nabla u_n \|_{L^2}^\frac{d\sigma}{\sigma + 1}.
\end{equation*}
Since $\frac{d\sigma}{\sigma + 1} < 2$, we obtain that $\| u_n \|_{H^1} \lesssim 1$ and, up to a subsequence, there exists $0 \leq u \in H^1_{rad}$ such that $u_n \rightharpoonup u$ in $H^1$. Since $H^1_{rad} \hookrightarrow L^{2\sigma + 2}$ is compact, we have 
\begin{equation*}
    F[u] \leq \liminf_{n \to \infty} F[u_n],
\end{equation*}
which yields $\| u_n \|_{H^1} \to \| u \|_{H^1}$. Then $u$ satisfies $F'[u] = \frac{\lambda}{2} u$ for some Lagrange multiplier $\lambda \in \R$, where 
 \begin{equation*}
        F'[u] = \frac{2}{\| u \|_{L^{2\sigma +2}}^2} \left( -\Delta u +  \omega u - \frac{\| \nabla u \|_{L^2}^2 +  \omega \| u \|_{L^2}^2}{\| u \|_{L^{2\sigma + 2}}^{2\sigma + 2}} |u|^{2\sigma}u \right).
    \end{equation*}
 Thus $u$ satisfies
    \begin{equation*}
        - \Delta u +  \omega u - \mu[u] |u|^{2\sigma} u - \frac \lambda 4 \| u\|_{L^{2\sigma + 2}}^2 u = 0.
    \end{equation*}
    By taking the scalar product of this equation with $u$, we obtain that 
    \begin{equation*}
         \frac \lambda 4 \| u\|_{L^{2\sigma + 2}}^2 \|u\|_{L^2}^2 = 0
    \end{equation*}
   and so $\lambda = 0$.
\end{proof}

The hypothesis $\omega >0$ in Proposition \ref{lemMinRad} can be relaxed for bounded domains.

\begin{prop}\label{lemMinBND}
    Let $\omega \in \R$,  $V = H^1(\Omega)$, $\Omega \subset \R^d$ bounded. There exists $u\in V$ such that $F[u] = f_m$ defined in \eqref{eqMinF}.
\end{prop}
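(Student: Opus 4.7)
The plan is to apply the direct method of the calculus of variations, following the scheme of Proposition \ref{lemMinRad}. The essentially new point is that $\omega$ may be negative, so the numerator of $F$ can change sign and $F$ is not automatically bounded below on the $L^2$-sphere. The compactness ingredient will come from the Rellich--Kondrachov embedding $H^1(\Omega) \hookrightarrow L^{2\sigma+2}(\Omega)$, which is compact on a bounded $C^2$ domain for $\sigma < \frac{2}{(d-2)^+}$, replacing the compact radial embedding used in the previous proposition.

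As a first step I would verify that $f_m > -\infty$. When the numerator $\| \nabla u\|_{L^2}^2 + \omega m$ is nonnegative, trivially $F[u] \geq 0$. In the remaining case ($\omega < 0$ and $\| \nabla u\|_{L^2}$ small), the absolute value of the numerator is bounded by $|\omega| m$, while the denominator is bounded below by $m/Vol(\Omega)^{\sigma/(\sigma+1)}$ thanks to \eqref{eq: UnifBnPot}. This yields the uniform bound $F[u] \geq - |\omega|\, Vol(\Omega)^{\sigma/(\sigma+1)}$, hence $f_m$ is finite.

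Next, for a minimizing sequence $\{u_n\} \subset V$ with $\|u_n\|_{L^2}^2 = m$ and $F[u_n] \to f_m$, I would rewrite the problem as
\begin{equation*}
\| \nabla u_n \|_{L^2}^2 = F[u_n]\, \| u_n \|_{L^{2\sigma+2}}^2 - \omega m,
\end{equation*}
and plug in \eqref{eq: GN} to estimate the $L^{2\sigma+2}$ factor. Since $d\sigma/(\sigma+1) < 2$ and $F[u_n]$ is bounded, this forces $\| \nabla u_n \|_{L^2}$ to stay bounded, exactly as in Proposition \ref{lemMinRad}. Therefore $\{u_n\}$ is bounded in $H^1(\Omega)$, and up to a subsequence $u_n \rightharpoonup u$ weakly in $H^1$. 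Compactness of the embedding gives strong convergence $u_n \to u$ in $L^{2\sigma+2}$ and in $L^2$; in particular $\|u\|_{L^2}^2 = m$, so $u$ is admissible and $\|u\|_{L^{2\sigma+2}} > 0$.

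Finally, weak lower semi-continuity of the Dirichlet energy together with strong convergence of the denominator $\| u_n \|_{L^{2\sigma+2}}^2 \to \| u \|_{L^{2\sigma+2}}^2 > 0$ yields $F[u] \leq \liminf_n F[u_n] = f_m$, and admissibility of $u$ forces equality. The part that will require a little care is the sign issue in the numerator when $\omega < 0$: both the finiteness of $f_m$ and the passage to the limit are cleaner if one works with the identity $F[u_n] \|u_n\|_{L^{2\sigma+2}}^2 = \|\nabla u_n\|_{L^2}^2 + \omega m$ rather than with the quotient $F$ directly, so that lower semi-continuity is applied only to $\| \nabla \cdot \|_{L^2}^2$ and the constant term $\omega m$ is unaffected.
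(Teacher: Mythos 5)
Your proposal is correct and follows essentially the same route as the paper: the direct method with the compact embeddings $H^1(\Omega)\hookrightarrow L^2(\Omega)$ and $H^1(\Omega)\hookrightarrow L^{2\sigma+2}(\Omega)$ as in Proposition \ref{lemMinRad}, supplemented for $\omega<0$ by the lower bound $F[u]\geq \omega\,Vol(\Omega)^{\frac{\sigma}{\sigma+1}}$ obtained from \eqref{eq: UnifBnPot}. Your extra care with the identity $F[u_n]\|u_n\|_{L^{2\sigma+2}}^2=\|\nabla u_n\|_{L^2}^2+\omega m$ is a sound way to handle the sign of the numerator, but it is the same argument the paper compresses into its short remark.
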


\begin{proof}
    The proof is the same as that of Proposition \ref{lemMinRad} and it is based on the compact embeddings $H^1(\Omega) \hookrightarrow L^2(\Omega)$ and  $H^1(\Omega) \hookrightarrow L^{2\sigma + 2} (\Omega)$. For $\omega < 0$, we only need to prove that $F$ is bounded from below. From \eqref{eq: UnifBnPot} we get $F(u) \geq \omega Vol(\Omega)^\frac{\sigma}{\sigma + 1}$. 
\end{proof}

\section{Local Well-posedness}\label{secExist}

\subsection{Local well posedness in $\R^d$.}

We establish the local well-posedness of equation \eqref{eq: flow} using the contraction principle. Due to the unusual nonlinearity, we use separately the space-time estimates \eqref{eq: sptime2} \eqref{eq: smooth}, and consequently the proof is divided into two parts. In the first part, we use \eqref{eq: sptime1} and \eqref{eq: sptime2} to address the interval $\frac{1}{2} \leq \sigma < \frac{2}{(d-2)^+}$. For the case $0 \leq \sigma \leq \frac{1}{2}$, refer to Remark \ref{rmkLocalWP} below. Both results are proven for $\Omega \equiv \R^d$ but can also be applied to bounded domains.

We begin with the first case.

\begin{prop}\label{thm: ConPrin1}
    Let $\frac{1}{2} \leq \sigma < \frac{2}{(d-2)^+}$. Then, for any $u_0\in H^1(\R^d)$, there exists $T_{max}>0$ and a solution $u \in C([0,T_{max}),H^1(\R^d))$ to \eqref{eq: flow}. Moreover, either $T_{max} = \infty$, or $T_{max} < \infty$ and
\begin{equation}\label{eq: bu1}
\lim_{t \rightarrow T_{max}} \| \nabla u(t)\|_{L^2} = \infty.
\end{equation}
\end{prop}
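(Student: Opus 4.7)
The plan is to apply Banach's fixed point theorem to the Duhamel operator
\begin{equation*}
\Phi(u)(t) = e^{-\omega t}\,e^{t\Delta}u_0 + \int_0^t e^{-\omega(t-s)}\,e^{(t-s)\Delta}\bigl(\mu[u(s)]\,|u(s)|^{2\sigma}u(s)\bigr)\,ds,
\end{equation*}
in the Strichartz-type space
\begin{equation*}
E_T = C([0,T];H^1(\R^d)) \cap L^q([0,T];W^{1,r}(\R^d)),
\end{equation*}
where $(q,r)$ is the heat-admissible pair with $r = 2\sigma+2$ and $\tfrac{2}{q}+\tfrac{d}{r}=\tfrac{d}{2}$; subcriticality $\sigma < 2/(d-2)^+$ ensures $q>2$ and the pair is admissible in Proposition \ref{prp: sptime}. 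If $u_0 \equiv 0$, the trivial function is a solution, so I may assume $u_0 \not\equiv 0$, and then $c_0 := \|u_0\|_{L^{2\sigma+2}}>0$ by the subcritical Sobolev embedding $H^1 \hookrightarrow L^{2\sigma+2}$.

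The central obstacle is the nonlocal prefactor $\mu[u(t)]$ from \eqref{eq: mu}: its denominator $\|u(t)\|_{L^{2\sigma+2}}^{2\sigma+2}$ may degenerate. To handle this, I would use strong $L^{2\sigma+2}$-continuity of the heat semigroup to get $\|e^{-\omega s}e^{s\Delta}u_0\|_{L^{2\sigma+2}}\geq 3c_0/4$ on $[0,T]$ for $T$ small, and then restrict the fixed-point argument to the closed set
\begin{equation*}
B_{M,T} = \Bigl\{ u \in E_T \,:\, \|u\|_{E_T} \leq M,\ \inf_{t \in [0,T]}\|u(t)\|_{L^{2\sigma+2}} \geq c_0/2\Bigr\},
\end{equation*}
with $M$ comparable to $\|u_0\|_{H^1}$. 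On $B_{M,T}$ the coefficient is uniformly controlled, $|\mu[u(t)]| \lesssim M^2/c_0^{2\sigma+2}$, which tames the nonlinearity.

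For the mapping estimate, I use that $\mu[u(t)]$ is spatially constant, so that $\nabla\bigl(\mu[u]|u|^{2\sigma}u\bigr) = (2\sigma+1)\,\mu[u]\,|u|^{2\sigma}\nabla u$. The $L^\infty_t H^1_x$ contribution to $\|\Phi(u)\|_{E_T}$ is handled by the gradient smoothing estimate \eqref{eq: smooth}, while the $L^q_t W^{1,r}_x$ contribution is handled by the space-time estimates \eqref{eq: sptime1}--\eqref{eq: sptime2}. Hölder's inequality in time yields a factor $T^\theta$ with $\theta>0$ by strict subcriticality, giving $\Phi(B_{M,T})\subset B_{M,T}$; the lower bound $\|\Phi(u)(t)\|_{L^{2\sigma+2}} \geq c_0/2$ propagates because the Duhamel integral contributes only $O(T^\theta)$ in $L^{2\sigma+2}$. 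For the contraction estimate, the pointwise bound
\begin{equation*}
\bigl||z|^{2\sigma}z - |w|^{2\sigma}w\bigr| \lesssim (|z|^{2\sigma}+|w|^{2\sigma})\,|z-w|,
\end{equation*}
valid precisely when $2\sigma \geq 1$, is exactly where the hypothesis $\sigma \geq 1/2$ enters; combined with the Lipschitz dependence $|\mu[u]-\mu[v]| \lesssim \|u-v\|_{H^1}$ on $B_{M,T}$ (which requires the uniform lower bound on $\|u\|_{L^{2\sigma+2}}$), the same Strichartz--Hölder scheme gives $\|\Phi(u)-\Phi(v)\|_{E_T}\leq \tfrac12\|u-v\|_{E_T}$.

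The blow-up alternative \eqref{eq: bu1} is a standard continuation argument: if $T_{\max}<\infty$ but $\limsup_{t \to T_{\max}}\|\nabla u(t)\|_{L^2}<\infty$, one picks $t_0<T_{\max}$ where $\|u(t_0)\|_{H^1}$ is controlled and $\|u(t_0)\|_{L^{2\sigma+2}}$ is bounded below, and restarts the fixed-point argument from $t_0$; the local existence time depends only on these quantities, producing an extension past $T_{\max}$ and contradicting maximality. I expect the main delicacy to lie in choosing $M$, $c_0$, and $T$ coherently so that the lower bound on $\|u(t)\|_{L^{2\sigma+2}}$ is preserved under iteration and remains available during continuation; everything else reduces to a careful but routine bookkeeping of the Strichartz--Hölder estimates.
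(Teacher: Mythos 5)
Your scheme is essentially the paper's: a contraction argument for the Duhamel map in $L^\infty([0,T],H^1)\cap L^q((0,T),W^{1,r})$ with $r=2\sigma+2$ and the associated admissible $q$, carried out on a set with the extra constraint $\inf_{[0,T]}\|u(t)\|_{L^{2\sigma+2}}\geq c_0/2$ so that $\mu[u]$ is bounded above and Lipschitz, the lower bound being propagated through the smoothing estimate \eqref{eq: SmoothEst} and the $L^{2\sigma+2}$-continuity of $e^{t\Delta}u_0$; the role of $\sigma\geq 1/2$ for the Lipschitz estimate on $\nabla(|u|^{2\sigma}u)$ and the need for the full $H^1$-type metric (forced by the nonlocal coefficient) also match the paper.

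There is, however, one step that fails as stated: you assign the $L^\infty_t H^1_x$ component of the Duhamel term to the gradient smoothing estimate \eqref{eq: smooth}, which requires the forcing $\mu[u]\,|u|^{2\sigma}u$ to belong to $L^2([0,T],L^2)$. Pointwise in time, $\||u|^{2\sigma}u\|_{L^2}=\|u\|_{L^{4\sigma+2}}^{2\sigma+1}$, and $H^1(\R^d)\hookrightarrow L^{4\sigma+2}$ only when $4\sigma+2\leq \frac{2d}{(d-2)^+}$, i.e. $\sigma\leq\frac{1}{(d-2)^+}$. For $d\geq 3$ this excludes the upper part of the range covered by the proposition (e.g. $\sigma\in(1,2)$ when $d=3$, $\sigma\in(\tfrac12,1)$ when $d=4$), and this is precisely why the paper splits the local theory in two: the estimate \eqref{eq: smooth} is used only in the regime $\sigma<\frac{1}{(d-2)^+}$ (Proposition \ref{thm: proof1}), while in the present range the $L^\infty_tL^2_x$ bounds for both $\mathcal F(u)$ and $\nabla\mathcal F(u)$ are obtained from the retarded space-time estimate \eqref{eq: sptime2} with the admissible pair $(\infty,2)$ on the left and the dual pair $(q',r')$ on the right, applied to $|u|^{2\sigma}u$ and to $\nabla(|u|^{2\sigma}u)\lesssim|u|^{2\sigma}|\nabla u|$; these need only $u\in L^\infty_tH^1\cap L^q_tW^{1,r}$ together with $\sigma\geq\tfrac12$, and they yield the small factor $T^{(q-q')/(qq')}$ you are after. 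One could try to rescue your version by interpolating $\|u\|_{L^{4\sigma+2}}$ between $H^1$ and $W^{1,2\sigma+2}$, but that is exactly the delicate bookkeeping the Strichartz route avoids. A minor additional remark: your continuation argument restarts from a time where $\|u(t_0)\|_{L^{2\sigma+2}}$ is bounded below, and on $\R^d$ such a bound does not follow from boundedness of $\|\nabla u(t)\|_{L^2}$ and $L^2$-conservation alone; the paper is equally terse on this point, so it is not the main issue, but it deserves a comment if you want the blow-up alternative exactly as stated.
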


\begin{proof}
    Let  
    \begin{equation*}
        r=2\sigma +2, \quad q =  \frac{4\sigma + 4}{d\sigma},
    \end{equation*}
    and consider the set
    \begin{equation*}
	\begin{aligned}
    		 E = \bigg\{& u \in L^{\infty}\left([0,T], H^1(\R^d) \right) \cap L^{q}\left((0,T), W^{1,r} (\R^d) \right) :  \, \\
		  & \| u\|_{L^{\infty}\left((0,T), H^1\right)} \leq 2M, \ \| u\|_{L^{q}\left((0,T), W^{1,r} \right)} \leq 2M, \ \inf_{t \in [0,T]} \| u \|_{L^{2\sigma + 2}} \geq \frac{N}{2} \bigg\},				 
	\end{aligned}
    \end{equation*}
    where $T,M,N >0$ are to be chosen later. $E$ is a complete metric space with the distance
    \begin{equation*}
    	\mathrm{d}(u, v)=\|u-v\|_{L^{q}\left((0, T), W^{1,r}\right)}+\|u-v\|_{L^{\infty}\left([0, T], H^1\right)}.
    \end{equation*}
    Fix $u,v \in E$. Since $H^{1}\left(\R^{d}\right) \hookrightarrow L^{r}\left( \R^d \right)$, we obtain
\begin{equation*}
	\begin{aligned}
		\left\||u|^{2\sigma} u - |v|^{2\sigma} v \right\|_{L^{q^{\prime}}\left((0, T), L^{r^\prime}\right)} \lesssim 
		\left(\|u\|_{L^{\infty}\left([0, T], L^{r}\right)}^{2\sigma}+\|v\|_{L^{\infty}\left([0,T], L^{r}\right)}^{2\sigma}\right)\|u-v\|_{L^{q^\prime}\left((0, T), L^{r}\right)},
	\end{aligned}
\end{equation*}
\begin{equation*}
	\left\| \nabla \left( |u|^{2\sigma} u - |v|^{2\sigma} v  \right) \right\|_{L^{r^{\prime}}}  \lesssim \left( \|u\|_{L^{r}}^{2\sigma-1} + \|v\|_{L^{r}}^{2\sigma-1}\right) \left( \|\nabla u\|_{L^{r}} + \|\nabla v\|_{L^{r}} \right)  \|u - v\|_{H^1},
\end{equation*}
and
\begin{equation}\label{eq: omega2}
	\begin{aligned}
		&\left\|\nabla \left(|u|^{2\sigma} u - |v|^{2\sigma} v \right) \right\|_{L^{q^{\prime}}\left((0, T), L^{r^{\prime}}\right)} \lesssim \left(\|u\|_{L^{\infty}\left([0,T], L^{r}\right)}^{2\sigma-1}+\|v\|_{L^{\infty}\left([0,T], L^{r}\right)}^{2\sigma-1}\right) \\ 
		&\left( \| \nabla u\|_{L^{q^\prime}((0, T), L^{r})} + \| \nabla v\|_{L^{q^\prime}((0, T), L^{r})} \right)\|u-v\|_{L^{\infty}\left([0,T], H^1\right)}.
	\end{aligned} 
\end{equation}
Using H\"older's inequality, we deduce 
\begin{equation*}
	 \left\||u|^{2\sigma}u \right\|_{L^{q^{\prime}}\left((0, T), W^{1, r^{\prime}}\right)} \lesssim \left(T+T^{\frac{q-q^{\prime}}{q q^{\prime}}}\right)\left(1+M^{2\sigma}\right) M
\end{equation*}
 and
\begin{equation*}
	\left\||u|^{2\sigma}u - |v|^{2\sigma} v \right\|_{L^{q^{\prime}}\left((0, T), W^{1, r^{\prime}}\right)}\lesssim \\
	\left(T+T^{\frac{q-q^{\prime}}{q q^{\prime}}}\right)\left(1+M^{2\sigma}\right) \mathrm{d}(u, v) .
\end{equation*}
We notice 
\begin{equation*}
	\begin{aligned}
	\left\|  \mu[u]u - \mu[v] v \right\|_{L^{1}\left((0, T),H^1 \right)}  &\lesssim T\left( \| \mu[u]\|_{L^\infty[0,T]} \| u -  v\|_{L^{\infty}\left([0,T],H^1 \right)}  \right.\\ 
	 & \left. +\|  u\|_{L^{\infty}\left([0,T],H^1 \right)} \left\|  \mu[u] - \mu[v] \right \|_{L^{\infty}[0,T]}  \right),
	\end{aligned}
\end{equation*}
\begin{equation}\label{eq: muHeatEst}
 \| \mu[u]\|_{L^\infty[0,T]} \leq\frac{\max(1, \omega) \| u \|_{L^\infty([0,T],H^1)}^2}{N^{2\sigma + 2}} \lesssim 1
\end{equation}
and 
\begin{equation}\label{eq: omega1}
\begin{aligned}
 \left\| \mu[u] - \mu[v] \right\|_{L^{\infty}[0,T]} \leq C \| u- v \|_{L^{\infty}\left([0,T],H^1 \right)}.
\end{aligned}
\end{equation}
We denote by $\mathcal{F}(u)$ the map
\begin{equation}\label{eq: IntegForm}
	\mathcal{F}(u)(t)=e^{t\Delta} u_0+ \int_{0}^{t} e^{(t-s)\Delta} \left( \mu[u]|u|^{2\sigma} u - \omega u \right) \, ds .
\end{equation}
We obtain 
\begin{equation}\label{eq: strHeat1}
	\begin{split}
	\| \mathcal F (u)\|_{L^q([0,T),L^r) \cap L^\infty([0,T],L^2)} &\lesssim \| u_0\|_{L^2} +  T^{\frac{q-q^{\prime}}{q q^{\prime}}}  \| \mu[u] \|_{L^\infty[0,T]} \| u\|^{2\sigma}_{L^\infty([0,T],H^1)} \| u\|_{L^q([0,T),L^r)} \\ 
	&+ T \| u\|_{L^\infty([0,T],L^2)}
	\end{split}
\end{equation}
and 
\begin{equation}\label{eq: strHeat2}
	\begin{aligned}
		\| \nabla \mathcal F (u) \|_{L^q([0,T),L^r) \cap L^\infty([0,T],L^2)} & \lesssim \| \nabla u_0\|_{L^2} + T^{\frac{q-q^{\prime}}{q q^{\prime}}}   \| \mu[u] \|_{L^\infty[0,T]} \| u\|^{2\sigma}_{L^\infty([0,T],H^1)} \| \nabla u\|_{L^q([0,T),L^r)} \\
		&+ T \|\nabla u\|_{L^\infty([0,T],L^2)}.
	\end{aligned}
\end{equation}
As a consequence of \eqref{eq: strHeat1}, \eqref{eq: strHeat2} and \eqref{eq: muHeatEst}, there exists $K = K(M,N) >0$ such that
\begin{equation}\label{eq: upbond1}
\|\mathcal F (u)\|_{L^\infty([0,T], H^1) \cap L^{q}\left((0, T), W^{1, r}\right)} \lesssim  \|u_0\|_{H^{1}}+ \left(T+T^{\frac{q-q^{\prime}}{q q}}\right)K.
\end{equation}
Similarly, from \eqref{eq: omega1} we get 
\begin{equation}\label{eq: upbond2}
		\|\mathcal F (u)-\mathcal F (v)\|_{L^{\infty}\left(\left[0,T], H^1 \right)\right.} +\|\mathcal F (u)-\mathcal F (v)\|_{L^{q}\left((0, T), W^{1,r}\right)} \lesssim \left(T+T^{\frac{q-q^{\prime}}{q q^{\prime}}}\right)K \mathrm{d}(u, v) .
\end{equation}
Finally, we observe that
\begin{equation*}
 \begin{aligned}
 \big\| \mathcal F (u) (t) \big\|_{L^{2\sigma +2}} & \geq \| e^{t\Delta} u_0 \|_{L^{2\sigma +2}} - \bigg\| \int_{0}^{t} e^{(t-s)\Delta} \left( \mu[u]|u|^{2\sigma} u - \omega u \right) \, ds \bigg\|_{L^{2\sigma +2} }.
 \end{aligned}
\end{equation*}
Using the smoothing estimates \eqref{eq: SmoothEst} we obtain
\begin{equation*}
\begin{aligned}
   \bigg\| \int_{0}^{t} e^{(t-s)\Delta} \left( \mu[u]|u|^{2\sigma} u - \omega u \right) \, ds \bigg\|_{L^{2\sigma +2} } &\leq \int_0^t \mu[u(s)] \frac{1}{4\pi(t-s)^\frac{d\sigma}{2\sigma + 2}} \| u(s)\|_{L^{2\sigma + 2}} \,ds \\
   & + \omega \int_0^t \| u (s) \|_{L^{2\sigma + 2}}\, ds \\
   \\ &\lesssim (\| \mu \|_{L^\infty[0,t]} t^{1 - \frac{d\sigma}{2\sigma + 2}} + t) \| u \|_{L^\infty([0,t], L^{2\sigma + 2})}.
   \end{aligned}
\end{equation*}
Since $\frac{d\sigma}{2\sigma + 2} < 1$, we can use \eqref{eq: muHeatEst} to conclude that there exists 
 $K_1(M,N,T) = K_1(T) >0 $ with $K_1(T) \to 0$ as $T \to 0$  such that
\begin{equation*}
 \inf_{t \in [0,T]} \big\| \mathcal F (u) (t) \big\|_{L^{2\sigma + 2}} \geq  \inf_{t \in [0,T]}\| e^{t\Delta} u_0\|_{L^{2\sigma + 2}} - K_1 (T).
\end{equation*}
Since $e^{t\Delta} u_0 \in C([0,T], L^{2\sigma + 2})$ (see \cite[Theorem $15.2$]{QuSo19}), there exists $T >0$ small enough, such that 
\begin{equation}\label{eq: lwBond}
         \inf_{t \in [0,T]} \| F(u)(t) \|_{L^{2\sigma +2}}^2 \geq   \frac{1}{2} \|  u_0\|_{L^{2\sigma + 2}}
    \end{equation}
We set $N =  \| u_0 \|_{L^{2\sigma + 2}}$ and $M = \| u_0 \|_{H^1}$. For $T >0$ small enough,  \eqref{eq: lwBond}, \eqref{eq: upbond1} and \eqref{eq: upbond1} imply that  $\mathcal F$ maps $E$ into itself and is a contraction and the fixed point is a solution to \eqref{eq: flow}.
\end{proof}
\begin{remark}\label{rmkLocalWP}
   In the proof of Proposition \ref{thm: ConPrin1}, we use the distance $\mathrm{d}(u, v)=\|u-v\|_{L^{q}\left((0, T), W^{1,r}\right)}+\|u-v\|_{L^{\infty}\left([0, T], H^1\right)}$. This metric is required to effectively manage the nonlocal nonlinearity $\mu[u]$. However,  the power-type nonlinearity is locally Lipschitz continuous in $H^1$ only for $\frac{1}{2} \leq \sigma$.  In the other hand, due to the presence of the $H^1$ norm in the definition of $\mu[u]$,  it is not possible to use the weaker metric $\mathrm{g}(u, v)=\|u-v\|_{L^{q}\left((0, T), L^r \right)} + \|u-v\|_{L^{\infty}\left((0, T), L^2 \right)}$ as typically done (see for instance \cite[Theorem 4.4.1.]{Ca03}). 
\end{remark}

We now show the proof for the case $0<\sigma < \frac{1}{(d-2)^+}$. It is based on the contraction principle as before, and thus the details are omitted, the main idea being that, for $0 < \sigma < \ \frac{1}{d-2}$, the embedding $H^1 \hookrightarrow L^{4\sigma + 2}$ holds

\begin{prop}\label{thm: proof1}
		Let $0 \leq \sigma < \frac{1}{(d-2)^+}$. Then for any $u_0 \in H^1_0(\R^d)$, there exists a maximal time of existence $T_{max}>0$ and a unique solution $u \in C([0,T_{max}), H^1(\R^d))$ to \eqref{eq: flow}. Moreover, either $T_{max} = \infty$, or $T_{max} < \infty$ and 
		\begin{equation*}
			\lim_{t \rightarrow T_{max}} \| \nabla u(t)\|_{L^2} = \infty.
		\end{equation*}
\end{prop}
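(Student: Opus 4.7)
The plan is to apply the Banach fixed point theorem to the Duhamel map
\begin{equation*}
    \mathcal F(u)(t) = e^{t\Delta} u_0 + \int_0^t e^{(t-s)\Delta}\bigl(\mu[u]|u|^{2\sigma}u - \omega u\bigr)\,ds,
\end{equation*}
following the blueprint of Proposition \ref{thm: ConPrin1} but in a simpler metric space. The key structural simplification is that for $0 \leq \sigma < \frac{1}{(d-2)^+}$ one has $4\sigma + 2 \leq \frac{2d}{d-2}$, so the Sobolev embedding $H^1(\R^d) \hookrightarrow L^{4\sigma+2}(\R^d)$ holds. Combined with H\"older's inequality, this yields the pointwise-in-time estimates
\begin{equation*}
    \||u|^{2\sigma}u\|_{L^2} \lesssim \|u\|_{H^1}^{2\sigma+1},\qquad \||u|^{2\sigma}u - |v|^{2\sigma}v\|_{L^2} \lesssim \bigl(\|u\|_{H^1}^{2\sigma} + \|v\|_{H^1}^{2\sigma}\bigr)\|u - v\|_{H^1},
\end{equation*}
so no mixed space-time integrability is needed and one can dispense with the Strichartz-type bound \eqref{eq: sptime2} used in the previous proposition.

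Concretely, I would work in
\begin{equation*}
    E = \Bigl\{ u \in C([0,T], H^1(\R^d))\ :\ \|u\|_{L^\infty([0,T], H^1)} \leq 2M,\ \inf_{t\in[0,T]}\|u(t)\|_{L^{2\sigma+2}} \geq \tfrac{N}{2}\Bigr\},
\end{equation*}
endowed with the metric $\mathrm d(u,v) = \|u-v\|_{L^\infty([0,T], H^1)}$, and choose $M = \|u_0\|_{H^1}$, $N = \|u_0\|_{L^{2\sigma+2}}$. The lower bound on $\|u(t)\|_{L^{2\sigma+2}}$ keeps the denominator of $\mu[u]$ away from zero; together with the $H^1$ bound it gives $\|\mu[u]\|_{L^\infty[0,T]} \lesssim 1$ as well as the Lipschitz estimate $\|\mu[u] - \mu[v]\|_{L^\infty[0,T]} \lesssim \mathrm d(u,v)$, exactly as in \eqref{eq: muHeatEst}--\eqref{eq: omega1}.

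To close the fixed-point argument I would set $F = \mu[u]|u|^{2\sigma}u - \omega u$ and use H\"older in time to get
\begin{equation*}
    \|F\|_{L^2([0,T], L^2)} \lesssim T^{1/2}\bigl(1 + M^{2\sigma}\bigr)M,
\end{equation*}
together with analogous bounds for $F(u) - F(v)$. Proposition \ref{prop: smooth_par} then provides uniform-in-$t$ control of $\|\nabla \mathcal F(u)\|_{L^2}$, while the contraction semigroup estimate $\|e^{(t-s)\Delta}\|_{L^2 \to L^2} \leq 1$ inside Duhamel handles the $L^2$ part. Both contributions carry a small prefactor $T^{1/2}$, so for $T$ small enough $\mathcal F$ maps $E$ into itself and is a contraction in $\mathrm d$. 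Persistence of the lower bound $\|\mathcal F(u)(t)\|_{L^{2\sigma+2}} \geq N/2$ is obtained as in the proof of Proposition \ref{thm: ConPrin1}, from continuity of $t \mapsto e^{t\Delta}u_0$ in $L^{2\sigma+2}$ combined with the smoothing estimate \eqref{eq: SmoothEst} applied to the Duhamel integral. Uniqueness in $C([0,T_{max}), H^1)$ and the blow-up alternative then follow by standard continuation.

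The main subtlety, already flagged in Remark \ref{rmkLocalWP}, is that because $\mu[u]$ depends on $\|\nabla u\|_{L^2}$, the contraction must be closed at the $H^1$ level; one cannot weaken the metric to $L^2$ as in the classical semilinear heat setting. Apart from this, all estimates become routine once the embedding $H^1 \hookrightarrow L^{4\sigma+2}$ is available, which is the precise reason the restriction $\sigma < \frac{1}{(d-2)^+}$ appears in the statement.
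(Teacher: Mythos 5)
Your proposal is correct and follows essentially the same route as the paper's own proof: a contraction in the $L^\infty([0,T],H^1)$ metric on a ball with a lower bound keeping the denominator of $\mu[u]$ away from zero, the embedding $H^1 \hookrightarrow L^{4\sigma+2}$ to bound the nonlinearity pointwise in time, the gradient smoothing estimate \eqref{eq: smooth} with a H\"older-in-time factor $T^{1/2}$, and persistence of the lower bound as in Proposition \ref{thm: ConPrin1}. The only cosmetic difference is that the paper states its lower-bound constraint on $\|u\|_{L^2}$ while you phrase it on $\|u\|_{L^{2\sigma+2}}$, which is in fact the quantity actually estimated in \eqref{eq: lwBond}.
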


\begin{proof}
	We define the set
	\begin{equation*}
		\mathcal{X} = \{ u \in L^\infty([0,T], H^1(\R^d)),\, \| u\|_{L^\infty([0,T],H^1)} \leq 2M, \, \inf_{[0,T]} \| u\|_{L^2} \geq \frac{N}{2} \},
	\end{equation*}
	and we  choose later $T, M, N >0$. This space is equipped with the distance 
	\begin{equation*}
		d(u,v) = \| u - v\|_{L^\infty([0,T],H^1)}. 
	\end{equation*}
	Given any $u_0 \in H^1(\R^d)$, $u_0 \centernot{\equiv} 0$, $u,v\in \mathcal X$, we consider the map $\mathcal{F}(u)(t)$ defined in \eqref{eq: IntegForm}. From \eqref{eq: strHeat1} and \eqref{eq: lwBond} we see that there exists $K(u_0,M,N,T) = K(T) >0 $ such that $K(T) \to 0$ as $T \to 0$ so that
	\begin{equation}\label{eq: strHeat11}
		\| \mathcal F (u)\|_{L^\infty([0,T],L^2)} \lesssim \| u_0\|_{L^2} +  K(T)
	\end{equation}
    and 
    \begin{equation*}
        \inf_{t \in [0,T]} \big\| \mathcal F (u) (t) \big\|_{L^{2\sigma + 2}} \geq  \inf_{t \in [0,T]}\| e^{t\Delta} u_0\|_{L^{2\sigma + 2}} - K (T).
    \end{equation*}
	Furthermore, by \eqref{eq: smooth} we infer that
		\begin{equation*}
        \begin{aligned}
            \|	\nabla \mathcal{F}(u)\|_{L^\infty([0,T],L^2)} &\lesssim \| u_0\|_{H^1} + T^\frac{1}{2} \| \mu[u]|u|^{2\sigma} u - \omega  u \|_{L^\infty([0,T], L^2)} \\ 
            & \lesssim  \| u_0\|_{H^1} + T^\frac{1}{2} \left(  \| \mu[u] \|_{L^{\infty}} \| u\|^{2\sigma + 1}_{L^{\infty}([0,T]L^{4\sigma +2})} + \| u \|_{L^\infty([0,T], L^2)}\right).
        \end{aligned}
		\end{equation*}
	Notice that $\sigma < \frac{1}{(d-2)^+}$ implies $4\sigma +2 < \frac{2d}{(d-2)^+},$ and so there exists $K_1(M,N,T) = K_1(T) >0$ such that $K_1(T) \to 0$ as $T \to 0$ and   
	\begin{equation}\label{eq: strHeat14}
			\|\nabla\mathcal{F}(u)\|_{L^\infty([0,T],L^2)} \leq \|u_0\|_{H^1} + K_1 (T).
	\end{equation}
    A standard choice of $M, N$, and $T$ can be done to have that $\mathcal F$ maps $\mathcal{X}$ in itself. Similar computations lead to see that it is a contraction. 
\end{proof}

\begin{remark}
    Theorems \ref{thm: ConPrin1} and \ref{thm: proof1} cover the whole subcritical interval $0 < \sigma < \frac{2}{d - 2}$ for $d \leq 4$. 
\end{remark}

\subsection{Schauder fixed point theorem}
Here we show the local well-posedness employing the Schauder fixed point theorem. This proof covers the entire energy subcritical regime in any dimension in a bounded domain. The proof aims to avoid using the contraction argument involving the product of $\mu[u]$ and $|u|^{2\sigma}u$ in \eqref{eq: flow}.  

\begin{lem}\label{lem: fixed}
		Let $0<\sigma < \frac{2}{(d-2)^+}$ and $\lambda \in L^\infty(\R)$. Then there exists $0<T=T(\|u_0\|_{H^1_0}, \|\lambda\|_{L^\infty})$ such that the Cauchy problem 
		\begin{equation}\label{eq: heat_mu}
			\begin{cases}
			\partial_t u = \Delta u - \omega u  + \lambda |u|^{2\sigma} u , \\
			u_{|\partial\Omega} = 0,\\
			u(0) = u_0 \in H^1_0(\Omega),
			\end{cases}
		\end{equation}	
		admits a unique solution $u \in C([0,T], H^1_0(\Omega))$.    Finally, if $u_0 \in H^2(\Omega) \cap H^1_0(\Omega)$ then  $u \in C([0,T], H^2(\Omega) \cap H^1_0(\Omega))$.
\end{lem}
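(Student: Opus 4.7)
The plan is to apply Schauder's fixed point theorem to the Duhamel map
\begin{equation*}
\mathcal{F}(u)(t) = e^{t\Delta}u_0 + \int_0^t e^{(t-s)\Delta}\bigl(\lambda(s)|u(s)|^{2\sigma}u(s) - \omega u(s)\bigr)\,ds,
\end{equation*}
restricted to a closed, bounded, convex subset $K$ of $X = C([0,T], H^1_0(\Omega))$, for instance
\begin{equation*}
K = \bigl\{u \in X : u(0) = u_0,\ \|u\|_{L^\infty([0,T], H^1_0)} \leq 2\|u_0\|_{H^1_0}\bigr\}.
\end{equation*}
Since $\lambda$ is a prescribed $L^\infty$ function of time, the nonlinearity in \eqref{eq: heat_mu} behaves as a standard power term; I choose Schauder over contraction because $u \mapsto |u|^{2\sigma}u$ is only H\"older in $H^1_0$ when $\sigma < 1/2$, whereas the continuity and compactness ingredients needed for Schauder remain available throughout the subcritical range.

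For the invariance $\mathcal{F}(K) \subset K$, I combine the $L^{p'}\to L^2$ smoothing from Proposition \ref{prp: lplq} with the standard gradient gain $\|\nabla e^{\tau\Delta}\|_{L^2 \to L^2} \lesssim \tau^{-1/2}$ to obtain
\begin{equation*}
\|e^{(t-s)\Delta}(|u|^{2\sigma}u)\|_{H^1_0} \lesssim (t-s)^{-\beta}\|u\|_{L^{2\sigma+2}}^{2\sigma+1}, \qquad \beta = \tfrac{1}{2} + \tfrac{d\sigma}{4\sigma+4},
\end{equation*}
where $p' = (2\sigma+2)/(2\sigma+1)$. A direct computation shows $\beta < 1$ precisely when $\sigma < 2/(d-2)^+$, so the time integration costs only a factor $T^{1-\beta}$, and the Sobolev embedding $H^1_0(\Omega) \hookrightarrow L^{2\sigma+2}(\Omega)$ closes the bound. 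Continuity of $\mathcal{F}$ on $K$ in the $X$-topology follows from the same smoothing applied to $|u|^{2\sigma}u - |v|^{2\sigma}v$, using the pointwise inequality $\bigl||a|^{2\sigma}a - |b|^{2\sigma}b\bigr| \leq C(|a|^{2\sigma}+|b|^{2\sigma})|a-b|$ together with the compact Sobolev embeddings on bounded $\Omega$ to upgrade $u_n \to u$ in $X$ to strong convergence of the nonlinearity in $C([0,T], L^{p'})$. For relative compactness of $\mathcal{F}(K)$ in $X$, I apply Arzel\`a--Ascoli: equicontinuity in time comes from splitting $\mathcal{F}(u)(t+h) - \mathcal{F}(u)(t)$ into $(e^{h\Delta}-I)\mathcal{F}(u)(t)$, controlled by the strong continuity of $e^{t\Delta}$ on $H^1_0$, plus a Duhamel tail of size $O(h^{1-\beta})$ uniform over $K$; and pointwise precompactness in $H^1_0$ at each $t > 0$ follows from the mild gain of regularity beyond $H^1_0$ that the smoothing estimates provide for the Duhamel part, combined with the compact embedding $H^{1+\varepsilon}(\Omega) \hookrightarrow H^1_0(\Omega)$ on the bounded domain.

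Schauder's theorem then furnishes a fixed point $u \in K$ solving \eqref{eq: heat_mu}. For uniqueness, I test the equation for $w = u_1 - u_2$ against $w$ itself; combining the H\"older bound above with $\int|u|^{2\sigma}|w|^2 \leq \|u\|_{L^{2\sigma+2}}^{2\sigma}\|w\|_{L^{2\sigma+2}}^2$, the Gagliardo-Nirenberg inequality \eqref{eq: GN}, and Young's inequality to absorb $\|\nabla w\|_{L^2}^2$ into the dissipation yields
\begin{equation*}
\tfrac{d}{dt}\|w\|_{L^2}^2 \lesssim \bigl(1 + \|u_1\|_{H^1}^{2\sigma} + \|u_2\|_{H^1}^{2\sigma}\bigr)\|w\|_{L^2}^2,
\end{equation*}
and Gronwall gives $w \equiv 0$. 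Persistence of $H^2$ regularity when $u_0 \in H^2 \cap H^1_0$ follows from a standard bootstrap: with $u \in L^\infty([0,T], H^1_0)$ known, the forcing $\lambda|u|^{2\sigma}u - \omega u$ sits in $L^\infty([0,T], L^q)$ for a suitable $q$, and parabolic maximal regularity, iterated once if $\sigma > 1/(d-2)^+$, lifts $u$ to $L^\infty([0,T], H^2 \cap H^1_0)$. The main obstacle I anticipate is the compactness step, where the equicontinuity of $\mathcal{F}(K)$ in $H^1_0$ must be controlled uniformly over $K$ near $t = 0$: the strong continuity of the heat semigroup on $H^1_0$ and the time-integrated smoothing bound have to be balanced carefully so that the decay of both contributions as $h \to 0^+$ is uniform in $u \in K$.
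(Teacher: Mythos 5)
Your proposal is correct in outline, but it takes a genuinely different route from the paper. The paper handles this lemma by the contraction mapping principle: because $\lambda$ is a prescribed bounded function of time (unlike the nonlocal coefficient $\mu[u]$), one can run the classical Kato/Cazenave--Weissler scheme of Proposition \ref{thm: ConPrin1}, taking a ball in $L^\infty([0,T],H^1)\cap L^q((0,T),W^{1,r})$ but contracting with respect to the weaker metric $\mathrm{g}(u,v)=\|u-v\|_{L^q((0,T),L^r)}+\|u-v\|_{L^\infty((0,T),L^2)}$; this needs only the difference bound $\bigl||a|^{2\sigma}a-|b|^{2\sigma}b\bigr|\lesssim(|a|^{2\sigma}+|b|^{2\sigma})|a-b|$, so it covers the whole range $0<\sigma<\frac{2}{(d-2)^+}$, gives existence and uniqueness in one stroke, and does not use boundedness of $\Omega$. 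Your reason for discarding contraction --- that $u\mapsto|u|^{2\sigma}u$ is only H\"older in $H^1$ for $\sigma<\frac12$ --- rules out contraction in the strong metric only; the weak-metric trick is exactly what the paper's Remark \ref{rmkLocalWP} says becomes unavailable when $\mu[u]$ is present, but it is available here. Your Schauder alternative is sound: the smoothing exponent $\beta=\frac12+\frac{d\sigma}{4\sigma+4}<1$ is computed correctly and characterizes subcriticality, the invariance of $K$ and continuity of $\mathcal F$ are fine, and the compactness worry you flag does close, since the linear part $e^{t\Delta}u_0$ is a single compact trajectory in $H^1_0$ while the Duhamel part is bounded, uniformly over $K$, in the domain of $(-\Delta)^{(1+\varepsilon)/2}$ for small $\varepsilon>0$, which embeds compactly into $H^1_0(\Omega)$ and gives $\|(e^{h\Delta}-I)v\|_{H^1}\lesssim h^{\varepsilon/2}$ uniformly; note, however, that this step ties your proof to bounded domains (acceptable for the lemma as stated), and that Schauder forces the separate Gronwall uniqueness argument, which you carry out correctly and which mirrors the paper's Proposition \ref{prp: uni}. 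The $H^2$ persistence is sketched at the same ``standard bootstrap'' level as in the paper. In short: the paper's argument is shorter, uniqueness comes for free, and it works on $\R^d$ as well; yours avoids any Lipschitz estimate on the nonlinearity and is in the same spirit as the Schauder argument the paper reserves for the full nonlocal problem.
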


The proof of this lemma is mostly identical to that of Proposition \ref{thm: ConPrin1}. It relays on the contraction principle with respect to the metric $\mathrm{g}(u, v)=\|u-v\|_{L^{q}\left((0, T), L^r\right)}  + \|u-v\|_{L^{\infty}\left((0, T), L^2 \right)}$. The proof is standard, see for example \cite{QuSo19}. The next lemma provides a priori bounds on the approximating sequence. These allow us to find the solution to \eqref{eq: flow} as a limit of this sequence in some suitable spaces. 

\begin{lem}\label{prp: it_time}
		Let $u_0 \in  H^1$ and $u \in C([0,T], H^1_0(\Omega))$ be the  corresponding solution to \eqref{eq: heat_mu}. Then there exists $0<T^*=T^*(\|u_0\|_{H^1},\| \lambda\|_{L^\infty}) \leq T$, so that
		\begin{equation}\label{eq: con}
			\sup_{t \in [0,T^*]} \| u(t)\|_{H^1} \leq 2 \| u_0\|_{H^1}, 
		\end{equation} 
		and
		\begin{equation}\label{eq: con2}
			\inf_{t \in [0,T^*]} \| u(t) \|_{L^{2\sigma +2}} \geq \frac{1}{2} \| u_0\|_{L^{2\sigma +2}}.
		\end{equation}
		Moreover, there exists $\mathcal{C}(\| u_0\|_{H^1}) = \mathcal{C} >0$ such that
		\begin{equation}\label{eq: mub}
			\sup_{[0,T^*]} \mu[u(t)] \leq \mathcal{C}.
		\end{equation} 
  Finally, if $u_0 \in H^2(\Omega) \cap H^1_0(\Omega) $ then
    \begin{equation}\label{eqCont2}
        \sup_{t \in [0,T^*]} \| u(t)\|_{H^2} \leq 2 \| u_0\|_{H^2}. 
    \end{equation}
\end{lem}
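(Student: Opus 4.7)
The argument is a continuity/bootstrap parallel to the contraction step in Proposition~\ref{thm: ConPrin1}, but applied to the given solution rather than to a fixed-point map. Since $u\in C([0,T],H^1_0(\Omega))$ and the subcritical embedding $H^1\hookrightarrow L^{2\sigma+2}$ is continuous, both $t\mapsto\|u(t)\|_{H^1}$ and $t\mapsto\|u(t)\|_{L^{2\sigma+2}}$ are continuous on $[0,T]$. I define
\[
T_0:=\sup\Bigl\{t\in[0,T]\,:\,\|u(s)\|_{H^1}\le 2\|u_0\|_{H^1}\text{ and }\|u(s)\|_{L^{2\sigma+2}}\ge\tfrac12\|u_0\|_{L^{2\sigma+2}}\ \forall s\in[0,t]\Bigr\}.
\]
By continuity $T_0>0$, and it suffices to show that $T_0\ge T^\ast$ for some $T^\ast$ depending only on $\|u_0\|_{H^1}$ and $\|\lambda\|_{L^\infty}$.

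\textbf{Closing the $H^1$ bootstrap.} I write the Duhamel form
\[
u(t)=e^{t\Delta}u_0+\int_0^t e^{(t-s)\Delta}\bigl(\lambda(s)|u|^{2\sigma}u-\omega u\bigr)\,ds
\]
and estimate the right-hand side in $L^\infty([0,T_0],H^1)\cap L^q((0,T_0),W^{1,r})$ with the Strichartz pair $(q,r)=(\tfrac{4\sigma+4}{d\sigma},2\sigma+2)$, exactly as around \eqref{eq: strHeat1}--\eqref{eq: strHeat2}. The bootstrap hypothesis and $\|\lambda\|_{L^\infty}$ bound $\||u|^{2\sigma}u\|_{L^{q'}W^{1,r'}}$, yielding
\[
\|u\|_{L^\infty([0,T_0],H^1)}\le \|u_0\|_{H^1}+K\bigl(T_0+T_0^{\frac{q-q'}{qq'}}\bigr),
\]
with $K=K(\|u_0\|_{H^1},\|\lambda\|_{L^\infty})$. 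For the lower bound I reproduce the argument ending at \eqref{eq: lwBond}, using the $L^p$--$L^q$ smoothing \eqref{eq: SmoothEst} and the continuity of $e^{t\Delta}u_0$ in $L^{2\sigma+2}$, to obtain
\[
\inf_{s\in[0,T_0]}\|u(s)\|_{L^{2\sigma+2}}\ge\inf_{s\in[0,T_0]}\|e^{s\Delta}u_0\|_{L^{2\sigma+2}}-K_1\bigl(T_0^{1-\frac{d\sigma}{2\sigma+2}}+T_0\bigr),
\]
the exponent $1-\tfrac{d\sigma}{2\sigma+2}$ being positive thanks to $\sigma<\tfrac{2}{(d-2)^+}$. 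Choosing $T^\ast$ small enough (depending only on $\|u_0\|_{H^1}$ and $\|\lambda\|_{L^\infty}$) so that both perturbation terms are strictly smaller than $\|u_0\|_{H^1}$ and $\tfrac14\|u_0\|_{L^{2\sigma+2}}$ respectively makes the bootstrap inequalities strict on $[0,T^\ast]$; continuity then forces $T_0\ge T^\ast$, giving \eqref{eq: con} and \eqref{eq: con2}.

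\textbf{$\mu$ and $H^2$ bounds.} Plugging \eqref{eq: con} and \eqref{eq: con2} directly into the definition of $\mu[u]$ produces \eqref{eq: mub} with $\mathcal{C}=\mathcal{C}(\|u_0\|_{H^1},\|u_0\|_{L^{2\sigma+2}})$. For \eqref{eqCont2}, under the additional hypothesis $u_0\in H^2\cap H^1_0$ I augment the bootstrap with $\|u(s)\|_{H^2}\le 2\|u_0\|_{H^2}$. Since $\Delta$ commutes with $e^{t\Delta}$ on $D(\Delta)=H^2\cap H^1_0$, the free part obeys $\|e^{t\Delta}u_0\|_{H^2}\le\|u_0\|_{H^2}$, while the Duhamel integral is controlled in $L^\infty H^2$ by $T^\ast\|\lambda|u|^{2\sigma}u-\omega u\|_{L^\infty([0,T^\ast],H^2)}$; a standard chain-rule plus Gagliardo--Nirenberg estimate bounds the latter by a polynomial in $\|u\|_{H^2}$, so a possibly smaller $T^\ast$ closes this additional bootstrap.

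\textbf{Main obstacle.} The delicate step is the $H^2$ bootstrap: bounding $\|\,|u|^{2\sigma}u\,\|_{H^2}$ by a polynomial in $\|u\|_{H^2}$ uniformly in the full subcritical range $0<\sigma<\tfrac{2}{(d-2)^+}$ requires a fractional composition/chain-rule estimate, because for $\sigma<1$ the map $x\mapsto|x|^{2\sigma}x$ is only $C^{1,2\sigma}$; the $H^1$ portion, by contrast, is essentially a direct transcription of the Strichartz computation from Proposition~\ref{thm: ConPrin1}.
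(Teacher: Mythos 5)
Your overall strategy is sound and is essentially a quantitative rendering of the paper's much terser argument: the paper simply invokes continuity of $t\mapsto\|u(t)\|_{H^1}$ and $t\mapsto\|u(t)\|_{L^{2\sigma+2}}$ (with the uniformity of $T^*$ in the norms coming from the ball-and-time choices already made in the fixed-point construction of Lemma \ref{lem: fixed}), obtains \eqref{eq: mub} as a direct consequence, and settles \eqref{eqCont2} by ``persistence of regularity''. Your Duhamel bootstrap reproves this from scratch, which is legitimate, but as written it has one loose joint: your bootstrap set only carries the $L^\infty H^1$ bound and the $L^{2\sigma+2}$ lower bound, yet the nonlinear estimate you invoke (the analogue of \eqref{eq: omega2}--\eqref{eq: strHeat2}) requires control of $\|\nabla u\|_{L^{q'}((0,T_0),L^{r})}$, i.e.\ the $L^{q}((0,T_0),W^{1,r})$ Strichartz norm of $u$ with $r=2\sigma+2>2$, and for $\sigma>0$ the norm $\|\nabla u\|_{L^{2\sigma+2}}$ is \emph{not} dominated by $\|u\|_{H^1}$. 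So the claim that ``the bootstrap hypothesis and $\|\lambda\|_{L^\infty}$ bound $\||u|^{2\sigma}u\|_{L^{q'}W^{1,r'}}$'' is not justified by the set you defined. The fix is standard and is exactly what the set $E$ in Proposition \ref{thm: ConPrin1} does: either include $\|u\|_{L^{q}((0,t),W^{1,r})}\leq 2M$ in the bootstrap (this norm is continuous in $t$ and vanishes as $t\to0$, so the continuity argument still closes), or first recover the Strichartz bound from the Duhamel formula using that the construction of Lemma \ref{lem: fixed} already places $u$ in $L^{q}_{loc}W^{1,r}$. Note also that your own computation shows the time $T^*$ and the constant in \eqref{eq: mub} pick up a dependence on $\|u_0\|_{L^{2\sigma+2}}$ through the lower bound \eqref{eq: lwBond}-type step; this matches the role the lemma plays later, but it is worth stating explicitly.

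Concerning the $H^2$ part, the obstacle you flag (a fractional chain rule for $z\mapsto|z|^{2\sigma}z$ when $\sigma<\tfrac12$) is avoidable and is not the route the paper intends: rather than estimating $\||u|^{2\sigma}u\|_{H^2}$, shift one derivative onto the kernel, writing $\Delta\int_0^t e^{(t-s)\Delta}N(u)\,ds$ with one gradient on the semigroup and one on the nonlinearity and using the $t^{-1/2}$ smoothing (or \eqref{eq: smooth}); then only $\|\nabla(\lambda|u|^{2\sigma}u-\omega u)\|_{L^2}\lesssim\|\lambda\|_{L^\infty}\||u|^{2\sigma}\nabla u\|_{L^2}+\|u\|_{H^1}$ is needed, which uses only the $C^1$ character of the nonlinearity together with Sobolev embeddings of $H^2$, and the short-time bootstrap for $\|u\|_{H^2}$ closes exactly as in the $H^1$ step. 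With these two repairs your proof is complete and consistent with the paper's (much shorter) argument.
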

	
\begin{proof}
	By continuity, since $u \in C([0,T], H^1_0(\Omega))$, we can find $T_1(\| u_0\|_{H^1},\| \lambda \|_{L^\infty[0,T_1]}) >0$ such that \eqref{eq: con} and \eqref{eq: con2} are true. \eqref{eq: mub} is a direct consequence. Finally, for $H^2$ data, we can use the persistence of regularity and continuity with respect to initial conditions. 
\end{proof}
 
Now, we employ the Schauder fixed point theorem to find a solution to \eqref{eq: flow} for any data in $H^2(\Omega) \cap H^1_0(\Omega)$. This requires an intermediate step where the nonlocal term is modified to 
\begin{equation}
    \label{eqMuEps}
    \mu_\eps[u] = \frac{\| \nabla u \|_{L^2}^2 + \omega \| u \|_{L^2}^2}{\| u \|_{L^{2\sigma + 2}}^{2\sigma + 2} + \eps}
\end{equation}
for $\eps >0$. This modification is introduced to ensure that the nonlocal term is bounded from above in a ball in  $L^\infty([0, T], H^1_0(\Omega))$ for some $T >0$.  We will later show that it is possible to take the limit $\eps \to 0$ to obtain a solution to \eqref{eq: flow}. The idea is that, as $\eps \to 0$, the $L^2$ norm of solutions to \eqref{eqHeatMuA} below admits a uniform lower bound in a small time interval. Due to \eqref{eq: UnifBnPot}, this yields a uniform lower bound also for the $L^{2\sigma +2}$ norm. Finally, by using a density argument, we will demonstrate that solutions to \eqref{eq: flow} exist for $H^1_0$ data. 

\begin{prop}
    Let $\Omega \subset \R^d$ be a bounded domain with $C^2$ boundary. For any $ u_0 \in H^2(\Omega) \cap H^1_0(\Omega)$, there exists a solution $u\in C([0,T),H^2(\Omega) \cap H^1_0(\Omega))$ to \eqref{eq: flow}.
\end{prop}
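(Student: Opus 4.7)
The plan is to apply the Schauder fixed point theorem to the regularized Cauchy problem
\[
\partial_t u = \Delta u - \omega u + \mu_\eps[u] |u|^{2\sigma} u, \qquad u_{|\partial \Omega} = 0, \qquad u(0) = u_0,
\]
with $\mu_\eps$ as in \eqref{eqMuEps}, and then to send $\eps \to 0$. The point of $\mu_\eps$ is the crude bound $\mu_\eps[v] \leq (1+|\omega|)\|v\|_{H^1}^2/\eps$, which prevents any small-denominator issue during the fixed-point step.

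Fix $\eps > 0$ and consider the closed, bounded, convex set
\[
K_T = \{ v \in C([0,T], H^1_0(\Omega)) \,:\, \|v\|_{L^\infty([0,T], H^1)} \leq 2\|u_0\|_{H^1} \}.
\]
For $v \in K_T$, set $\lambda_v(t) := \mu_\eps[v(t)] \in L^\infty([0,T])$; Lemma \ref{lem: fixed} produces a unique solution $\Phi_\eps(v) \in C([0,T], H^2 \cap H^1_0)$ of the linear-in-$\mu$ problem with coefficient $\lambda_v$ and datum $u_0 \in H^2 \cap H^1_0$. Since $\|\lambda_v\|_{L^\infty}$ is bounded uniformly over $K_T$ for fixed $\eps$, Lemma \ref{prp: it_time} gives $T = T(\|u_0\|_{H^2}, \eps) > 0$, independent of $v$, such that $\Phi_\eps(K_T) \subset K_T$ and $\|\Phi_\eps(v)\|_{L^\infty H^2} \leq 2\|u_0\|_{H^2}$. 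The equation itself, combined with this $H^2$ bound and the Sobolev embedding $H^2 \hookrightarrow L^{4\sigma+2}$ (which holds throughout the energy-subcritical range), gives $\|\partial_t \Phi_\eps(v)\|_{L^\infty L^2} \leq C$; the interpolation $\|w\|_{H^1} \lesssim \|w\|_{L^2}^{1/2}\|w\|_{H^2}^{1/2}$ then furnishes the H\"older-in-time equicontinuity $\|\Phi_\eps(v)(t) - \Phi_\eps(v)(s)\|_{H^1} \lesssim |t-s|^{1/2}$. Since $H^2(\Omega) \hookrightarrow H^1(\Omega)$ is compact on the bounded $\Omega$, Ascoli--Arzel\`a yields precompactness of $\Phi_\eps(K_T)$ in $C([0,T], H^1_0)$. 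Continuity of $\Phi_\eps$ is standard: if $v_n \to v$ in $C([0,T], H^1_0)$, then $\lambda_{v_n} \to \lambda_v$ uniformly (the denominator of $\mu_\eps$ is bounded below by $\eps$), and a Gr\"onwall argument applied to $\Phi_\eps(v_n) - \Phi_\eps(v)$ concludes. Schauder's theorem therefore produces a fixed point $u_\eps \in K_T$ that solves the regularized equation.

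To send $\eps \to 0$, I need estimates on $u_\eps$ that are uniform in $\eps$ on a common time interval. Testing the regularized equation against $u_\eps$ in $L^2$ and using
\[
\bigl|\mu_\eps[u_\eps]\,\|u_\eps\|_{L^{2\sigma+2}}^{2\sigma+2}\bigr| \leq \|\nabla u_\eps\|_{L^2}^2 + |\omega|\, \|u_\eps\|_{L^2}^2
\]
yields $\bigl|\tfrac{d}{dt}\|u_\eps\|_{L^2}^2\bigr| \leq C(|\omega|)\|u_\eps\|_{H^1}^2$. Combined with the $H^1$ bound in $K_T$, this gives a time $T^* > 0$ depending only on $\|u_0\|_{H^1}$, $\|u_0\|_{L^2}$ and $|\omega|$ on which $\|u_\eps(t)\|_{L^2}^2 \geq \tfrac{1}{2}\|u_0\|_{L^2}^2$ uniformly in $\eps$. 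Inequality \eqref{eq: UnifBnPot} then forces $\|u_\eps(t)\|_{L^{2\sigma+2}} \geq c > 0$ uniformly, whence $\mu_\eps[u_\eps]$ is uniformly bounded in $L^\infty([0,T^*])$. A second application of Lemma \ref{prp: it_time}, now with $\lambda = \mu_\eps[u_\eps]$, propagates the $H^2$ estimate $\|u_\eps\|_{L^\infty([0,T^*], H^2)} \leq 2\|u_0\|_{H^2}$ uniformly in $\eps$ on the whole interval $[0,T^*]$. Aubin--Lions (via the compactness $H^2 \hookrightarrow H^1$ and $\|\partial_t u_\eps\|_{L^\infty L^2} \leq C$) extracts a subsequence $u_\eps \to u$ in $C([0,T^*], H^1_0)$; the uniform lower bound on $\|u_\eps\|_{L^{2\sigma+2}}$ implies $\mu_\eps[u_\eps] \to \mu[u]$ in $L^\infty([0,T^*])$, so passing to the limit in the Duhamel formulation of the regularized equation yields a mild solution $u$ of \eqref{eq: flow}. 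Applying Lemma \ref{lem: fixed} one more time with $\lambda(t) = \mu[u(t)] \in L^\infty([0,T^*])$ upgrades $u$ to $C([0,T^*], H^2 \cap H^1_0)$.

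The main obstacle is the uniform-in-$\eps$ lower bound on $\|u_\eps\|_{L^{2\sigma+2}}$ in the passage to the limit: without it, $\mu_\eps[u_\eps]$ could degenerate and the limit equation would not be \eqref{eq: flow}. The bounded-domain inequality \eqref{eq: UnifBnPot} reduces this to the one-line $L^2$ continuation argument above, which is precisely why the statement is restricted to bounded $\Omega$.
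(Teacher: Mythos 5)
Your proposal is correct and follows essentially the same route as the paper: regularize with $\mu_\eps$ as in \eqref{eqMuEps}, run a Schauder fixed point built on Lemma \ref{lem: fixed} and Lemma \ref{prp: it_time} with compactness coming from uniform $H^2$ and $\partial_t$ bounds, obtain a uniform-in-$\eps$ lower bound on $\|u_\eps\|_{L^{2\sigma+2}}$ via the $L^2$-norm and \eqref{eq: UnifBnPot} so that $\mu_\eps[u_\eps]$ stays bounded, and pass to the limit by Aubin--Lions. The only (harmless) variations are that you control the $L^2$ norm by a crude differential inequality where the paper uses the exact identity \eqref{eqMassUeps} together with the $1/\eps$ bound on $\mu_\eps$, you argue compactness via interpolation and Ascoli--Arzel\`a rather than citing Aubin--Lions directly, and you make explicit the continuity of the fixed-point map and the final $H^2$-regularity upgrade, which the paper leaves implicit.
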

\begin{proof}
Let $F_{u_0,\eps}: C([0,T_{u_0,\eps}], H^1_0(\Omega)) \to C([0,T_{u_0,\eps}], H^1_0(\Omega))$ be such that $F_{u_0,\eps}(u) = v$ where $v$ satisfies
\begin{equation*}
    \begin{cases}
        \partial_t v = \Delta v  - \omega v + \mu_\eps[u]|v|^{2\sigma} v, \\
        v(0) = u_0.
    \end{cases}
\end{equation*}
  Let 
\begin{equation*}
    B_{u_0} = \left\{ u \in C([0,T_{u_0,\eps}], H^1_0(\Omega))\, : \, \| u \|_{L^\infty([0,T_{u_0,\eps}],H^1)} \leq 2 \| u_0 \|_{H^1} \right\}
\end{equation*}
Let $F_{u_0,\eps}(B_{u_0}) =  I_{u_0}$.  Notice that, for any $u \in B_{u_0}$, $F_{u_0,\eps}(u) \in C([0,T_{u_0,\eps}], H^2(\Omega) \cap H^1_0(\Omega) )$ and, by \eqref{eqCont2}, there exists $T_{u_0,\eps} >0 $ such that $I_{u_0} \subset B_{u_0}$ and  
\begin{equation*}
    \| F_{u_0,\eps}(u) \|_{L^\infty([0,\tilde T], H^2)} \leq 2 \| u_0\|_{H^2}.
\end{equation*} 
We show that $I_{u_0}$ is precompact. This follows from the fact that
    \begin{equation*}
        I_{u_0} \subset W = \left\{ v \in C([0,T_{u_0,\eps}], H^2(\Omega) \cap H^1_0(\Omega) )\,:\, \partial_t v \in L^2([0,T_{u_0,\eps}], L^2(\Omega)) \right\}.
    \end{equation*} 
    and the Aubin-Lions lemma  \cite[Theorem II.$5.16$]{BoFa12} which implies that the embedding of $W$ in $C([0,T_{u_0,\eps}], H^1_0(\Omega))$ is compact.
     Thus, applying the Schauder fixed point theorem \cite{GiTr98}, we find that $F_{u_0,\eps}$ admits a fixed point $u \in C([0,T_{u_0,\eps}], H^1_0(\Omega))$ satisfying 
    \begin{equation}\label{eqHeatMuA}
        \begin{cases}
        \partial_t u_\eps = \Delta u_\eps  - \omega u_\eps + \mu_{\eps}[u_\eps]|u_\eps|^{2\sigma} u_\eps, \\
        u_\eps(0) = u_0 \in H^2(\Omega) \cap H^1_0(\Omega) .
        \end{cases}
    \end{equation}
    By the persistence of regularity, we conclude that $u\in C([0,T_{u_0,\eps}], H^2(\Omega) \cap H^1_0(\Omega) )$.  
    By taking the scalar product of \eqref{eqHeatMuA} with $u_\eps$ we get
    \begin{equation}
        \label{eqMassUeps}
        \| u_\eps(t) \|_{L^2}^2 = \| u_0 \|_{L^2}^2 - 2\eps \int_0^t \mu_\eps[u_\eps(s)] ds.
    \end{equation}
    Since $\| u_\eps(t) \|_{L^2}^{2\sigma + 2} \leq Vol(\Omega)^{\sigma} \| u_\eps(t) \|_{L^{2\sigma +2}}^{2\sigma + 2}$ 
    we obtain 
    \begin{equation}
        \label{eqMuEps1}
        \mu_\eps[u_\eps(t)] \lesssim \frac{\| u_0 \|_{H^1}^2}{\left(\| u_0 \|_{L^2}^2 - 2\eps \int_0^t \mu_\eps[u_\eps(s)] ds\right)^{\sigma + 1} + \eps}.
    \end{equation}
    From
    \begin{equation*}
         \| \mu_\eps[u_\eps(s)] \|_{L^\infty[0,t]} \lesssim \frac{\| u_0 \|_{H^1}^2}{\eps},
    \end{equation*}
    and \eqref{eqMuEps} we get 
    \begin{equation*}
        \mu_\eps[u_\eps(t)] \lesssim \frac{\| u_0 \|_{H^1}^2}{\left(\| u_0 \|_{L^2}^2 - C t \| u_0 \|_{H^1}^2\right)^{\sigma + 1} + \eps}.
    \end{equation*}
    for some $C >0$. Thus there exists $\eps_0 (u_0) > 0$ and $T_0 = T(u_0) > 0$ such that for any $0 < \eps \leq \eps_0$, $\| \mu_\eps[u_\eps(t)] \|_{L^\infty[0,T_0]} \leq K$, where $K$ does not depend on $\eps$. In particular, for $\eps < \eps_0$, $u_\eps \in C([0,T_{0}], H^2(\Omega) \cap H^1_0(\Omega) )$ with $\partial_t u_\eps \in L^2([0,T_{0}], L^2(\Omega))$, $\| u_\eps \|_{H^1} \leq 2 \| u_0 \|_{H^1}$ and $\| \mu_\eps[u_\eps] \|_{L^\infty} \lesssim 1$. 
    
    Finally, let $\{\eps_n\}$ such that $\eps_n \to 0$ as $n\to \infty$ and $0 < \eps_n < \eps_0$. Then, there exists a subsequence, still denoted by $u_{\eps_n}$ such that 
	\begin{equation*}
		u_{\eps_n} \overset{\ast}{\rightharpoonup} u \ \mbox{ in } \ L^{\infty}([0,T], H^1_0(\Omega)) \quad \mbox{ and } \quad 	u_{\eps_n} \rightharpoonup u \ \mbox{ in } \ L^{2}([0,T], H^2(\Omega)). 
	\end{equation*}
	The Aubin-Lions lemma implies that $u_{\eps_n} \rightarrow u$ strongly in $L^2([0,T_0], H^1_0(\Omega))$ and in $C([0,T_0], L^2(\Omega))$. Therefore, $\mu_{\eps_n}[u_{\eps_n}] \rightarrow \mu[u]$ strongly in $L^2([0,T_0])$. Thus, $u$ satisfies \eqref{eq: flow}. 
\end{proof}

We show that \eqref{eq: flow} admits solutions for $H^1$-data. 
\begin{prop}\label{prpLocEx}
    Let $\Omega \subset \R^d$ be a bounded domain with $C^2$ boundary. For any $u_0 \in  H^1_0(\Omega)$, there exists a solution $u\in C([0,T), H^1_0(\Omega))$ to \eqref{eq: flow}.
\end{prop}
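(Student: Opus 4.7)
The plan is to pass from $H^2 \cap H^1_0$ initial data (handled by the previous proposition) to $H^1_0$ data by a density/compactness argument. Choose a sequence $u_{0,n} \in H^2(\Omega) \cap H^1_0(\Omega)$ with $u_{0,n} \to u_0$ in $H^1_0(\Omega)$; we may assume $u_0 \not\equiv 0$, so that $\| u_{0,n} \|_{L^{2\sigma+2}}$ stays uniformly bounded below. Let $u_n$ be the corresponding $H^2$-solutions produced by the previous proposition. Since $\| u_{0,n} \|_{H^1}$ is uniformly bounded, applying Lemma \ref{prp: it_time} yields a common existence time $T^* > 0$, independent of $n$, on which
\[
\| u_n \|_{L^\infty([0,T^*], H^1)} \leq C, \qquad \inf_{[0,T^*]} \| u_n \|_{L^{2\sigma + 2}} \geq c > 0, \qquad \| \mu[u_n] \|_{L^\infty([0,T^*])} \leq C.
\]

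Next I would establish compactness. Writing $\partial_t u_n = \Delta u_n - \omega u_n + \mu[u_n] |u_n|^{2\sigma} u_n$ and using the dual Sobolev embedding $L^{(2\sigma+2)/(2\sigma+1)} \hookrightarrow H^{-1}$ (valid exactly because $\sigma < 2/(d-2)^+$), the above bounds give $\partial_t u_n$ uniformly bounded in $L^\infty([0,T^*], H^{-1})$. The Aubin--Lions lemma, applied with the compact embedding $H^1_0(\Omega) \hookrightarrow L^2(\Omega)$, then extracts a subsequence such that $u_n \to u$ strongly in $C([0,T^*], L^2)$ and weakly-$\ast$ in $L^\infty([0,T^*], H^1_0)$. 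Interpolation with the $L^\infty L^{2\sigma+2}$ bound upgrades this to strong convergence in $C([0,T^*], L^{2\sigma+2})$, so that $|u_n|^{2\sigma} u_n \to |u|^{2\sigma} u$ strongly in $C([0,T^*], L^{(2\sigma+2)/(2\sigma+1)})$.

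The main obstacle is the nonlocal factor $\mu[u_n]$: it contains $\| \nabla u_n \|_{L^2}^2$, which only converges weakly, so the natural procedure only yields a weak-$\ast$ limit $\tilde\mu \in L^\infty([0,T^*])$ of $\mu[u_n]$ and a limit equation
\[
\partial_t u - \Delta u + \omega u = \tilde\mu(t) |u|^{2\sigma} u,
\]
with $u(0) = u_0$ and homogeneous Dirichlet boundary condition. To identify $\tilde\mu$ with $\mu[u]$ I would exploit the $L^2$-constraint: since $\| u_n(t) \|_{L^2} = \| u_{0,n} \|_{L^2}$ and $u_n \to u$ in $C L^2$, the limit mass $\| u(t) \|_{L^2} = \| u_0 \|_{L^2}$ is constant in $t$. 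Pairing the limit equation with $u$ (admissible since $u \in L^\infty H^1_0$ and $\partial_t u \in L^\infty H^{-1}$) and using $\frac{d}{dt}\| u \|_{L^2}^2 = 0$ forces
\[
\tilde\mu(t)\, \| u(t) \|_{L^{2\sigma+2}}^{2\sigma+2} = \| \nabla u(t) \|_{L^2}^2 + \omega\, \| u(t) \|_{L^2}^2
\]
for a.e. $t$. Since $\| u(t) \|_{L^{2\sigma+2}} \geq c > 0$ is inherited from the uniform lower bound on $u_n$, this identifies $\tilde\mu = \mu[u]$ and shows that $u$ solves \eqref{eq: flow}.

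Finally, to upgrade $u \in L^\infty([0,T^*], H^1_0) \cap C([0,T^*], L^2)$ to $u \in C([0,T^*], H^1_0)$, I would regard $u$ as a solution of the auxiliary problem \eqref{eq: heat_mu} with the now-known coefficient $\lambda := \mu[u] \in L^\infty([0,T^*])$. The uniqueness and $C H^1_0$-regularity statement of Lemma \ref{lem: fixed} then identifies $u$ (possibly after shrinking $T^*$) with the unique $C([0,T^*], H^1_0)$ solution of that problem with data $u_0$, yielding the desired continuity.
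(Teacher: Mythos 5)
Your argument is correct in its overall structure (density of $H^2\cap H^1_0$ data, the uniform bounds of Lemma \ref{prp: it_time} on a common time interval, Aubin--Lions compactness), but at the decisive step it takes a genuinely different route from the paper. The paper obtains an additional uniform bound in $L^2([0,T],H^2)$ by testing the approximate equation with $\Delta u^{(n)}$; Aubin--Lions then gives strong convergence in $L^2([0,T],H^1_0)$, so the gradient term in $\mu[u^{(n)}]$ converges strongly, $\mu[u^{(n)}]\to\mu[u]$ in $L^2([0,T])$, and the limit equation follows at once (moreover $u\in L^2([0,T],H^2)$ with $\partial_t u\in L^2([0,T],L^2)$ yields the stated $H^1$-continuity by interpolation). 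You avoid the $H^2$ energy estimate entirely: you settle for strong convergence in $C([0,T],L^2)$ and $C([0,T],L^{2\sigma+2})$, accept that $\mu[u_n]$ converges only weak-$\ast$ to some $\tilde\mu\in L^\infty([0,T])$, and then identify $\tilde\mu=\mu[u]$ by pairing the limit equation with $u$ and using the conservation of $\|u(t)\|_{L^2}$ inherited from the approximations --- i.e.\ you exploit exactly the property that defines $\mu$. This identification is sound: the pairing is legitimate since $u\in L^\infty([0,T],H^1_0)$ and $\partial_t u\in L^\infty([0,T],H^{-1})$, so the Lions--Magenes chain rule applies, and the denominator is bounded away from zero by passing \eqref{eq: con2} to the limit (or directly by \eqref{eq: UnifBnPot} and mass conservation). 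It is arguably more economical than the paper's argument, at the price of delivering less regularity of the limit, which is why you need an extra step at the end.

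That last step is the one place that needs repair. To upgrade $u$ to $C([0,T],H^1_0)$ you appeal to the uniqueness statement of Lemma \ref{lem: fixed} with $\lambda=\mu[u]$, but that uniqueness holds \emph{within} the class $C([0,T],H^1_0)$, which is precisely what your $u$ is not yet known to belong to; as written the identification of $u$ with the mild solution is circular. The fix is minor: either prove uniqueness for \eqref{eq: heat_mu} with $\lambda\in L^\infty$ in the larger class $L^\infty([0,T],H^1_0)\cap C([0,T],L^2)$ with $\partial_t u\in L^2([0,T],H^{-1})$ --- the Gronwall argument of Proposition \ref{prp: uni} goes through verbatim there --- or verify directly that your weak solution satisfies the Duhamel formula and recover $H^1$-continuity from the smoothing estimates of Section \ref{sec: semig}. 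With that amendment your proof is complete.
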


    \begin{proof}
    Let $u_0 \in H^1_0(\Omega) $ and let $\{u_0^{(n)}\}_{n \in \N} \subset H^2(\Omega) \cap H^1_0(\Omega) $ be such that  $ \| u_0^{(n)} \|_{H^1} \lesssim 1$ and
    \begin{equation*}
      \lim_{n \to \infty}  \| u_0^{(n)} - u_0 \|_{H^1} = 0.
    \end{equation*}
    Let $\{u^{(n)}\}_{n \in N} \subset C([0,T_n],H^2(\Omega) \cap H^1_0(\Omega) )$ be the set solutions to \eqref{eq: flow} emanating from $u_0^{(n)}$. By Lemma \ref{prp: it_time}, there exists $T = T(u_0) >0$, such that
    $$\{u^{(n)}\}_{n \in N} \subset C([0,T],H^2(\Omega) \cap H^1_0(\Omega) )$$ 
    and 
    \begin{equation*}
        \sup_{n\in \N, t\in [0,T]} \| u^{(n)} \|_{H^1} \lesssim 1, \quad \sup_{n\in \N} \| \mu[u^{(n)}] \|_{L^\infty[0,T]} \lesssim 1.
    \end{equation*}
     Moreover, $\{u^{(n)}\}$ is uniformly bounded in $L^2([0, T],H^2(\Omega))$. Indeed, we multiply equation \eqref{eqHeatMuA} by $\Delta u^{(n)}$ and integrate in space to get
	\begin{equation*}
	 \begin{aligned}
	 -\frac{1}{2} \frac{d}{dt} \| \nabla u^{(n)}\|_{L^2}^2 &= \| \Delta u^{(n)}\|_{L^2}^2 \\& - (2\sigma + 1) \mu[u^{(n)}] \int_\Omega |u^{(n)}|^{2\sigma} |\nabla u^{(n)}|^2 dx - \omega \| \nabla u^{(n)}\|_{L^2}^2 .
	 \end{aligned}
	\end{equation*}
	Integrating in time leads to
	\begin{equation*}
 \begin{aligned}
 \int_0^{T} \| \Delta u^{(n)}\|_{L^2}^2 \, ds &\lesssim 1 + T \| \nabla u^{(n)}\|_{L^{\infty}([0,T],L^2)}\\
 &+  \|\mu[u^{(n)}]  \int_0^{T} \int_\Omega |u^{(n)}|^{2\sigma} |\nabla u^{(n)}|^2 \,dx \,ds.
 \end{aligned}
	\end{equation*}
	We suppose that $d \geq 3$, the case $d \leq 2$ being easier. We have
	\begin{equation*}
		\int_\Omega |u^{(n)}|^{2\sigma} |\nabla u^{(n)}|^2 dx \leq \| u^{(n)}\|_{L^\frac{2d}{d-2}}^{2\sigma} \| \nabla u^{(n)} \|_{L^\frac{2d}{d-\sigma(d-2)}}^2,
	\end{equation*}
	and thus
 \begin{equation*}
 \int_0^{T} \int_\Omega |u^{(n)}|^{2\sigma} |\nabla u^{(n)}|^2 \, dx \, ds \leq \|u^{(n)} \|_{L^\infty([0,T],H^1)}^{2\sigma} (T)^{1/p} \| \nabla u^{(n)}\|_{L^\frac{4}{\sigma(d-2)}([0,T],L^\frac{2d}{d-\sigma(d-2)})}^2,
	\end{equation*}
 where 
 \begin{equation*}
 0 < p = \frac{2}{2-\sigma(d-2)} < \infty.
 \end{equation*}
	Consequently, we obtain 
    $$\{u^{(n)}\}_{n\in \N} \subset L^{\infty}([0,T], H^1_0(\Omega)) \cap L^2([0,T], H^2(\Omega) \cap H^1_0(\Omega)).$$ From the embedding $H^2 \hookrightarrow L^{4\sigma + 2}$, we also have $\{ \partial_t u^{(n)}\}_{n\in \N}  \subset L^2([0,T], L^2(\Omega))$ and the two sequences are uniformly bounded in these spaces.
 	
So there exists a subsequence $\{u^{(n_k)}\}_{k \in \N}$, and $u \in L^{\infty}([0,T], H^1_0(\Omega)) \cap L^{2}([0,T], H^2(\Omega) \cap H^1(\Omega))$ with $\partial_t u \in L^{2}([0,T], L^2(\Omega))$, such that
	\begin{equation*}
		u^{(n_k)} \overset{\ast}{\rightharpoonup} u \ \mbox{ in } \ L^{\infty}([0,T], H^1_0(\Omega)) \quad \mbox{ and } \quad 	u^{(n_k)} \rightharpoonup u \ \mbox{ in } \ L^{2}([0,T], H^2(\Omega)). 
	\end{equation*}
	The Aubin-Lions lemma implies that $u^{(n_k)} \rightarrow u$ strongly in $L^2([0,T], H^1_0(\Omega))$ and in $C([0,T], L^2(\Omega))$. Therefore, $\mu_\omega[u^{(n_k)}] \rightarrow \mu_\omega[u]$ strongly in $L^2([0,T])$. Thus, $u$ satisfies \eqref{eqHeatMuA}. 
 \end{proof}

\begin{prop}\label{prp: uni}
			 For any $u_0\in H^1_0(\Omega)$ there exists a unique strong solution to \eqref{eq: flow}.
\end{prop}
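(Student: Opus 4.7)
The existence half is not new: on $\R^d$, a solution is produced by Propositions \ref{thm: ConPrin1} and \ref{thm: proof1}, whose hypotheses together cover the full admissible range of $\sigma$ stated in Theorem \ref{thm: existence}; on a bounded $\Omega$, Proposition \ref{prpLocEx} supplies a solution in $C([0,T),H^1_0(\Omega))$. So the plan is to cite these and focus on uniqueness.

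For uniqueness, take two solutions $u,v\in C([0,T],H^1_0(\Omega))$ with $u(0)=v(0)=u_0$. A direct computation (taking the $L^2$ inner product of \eqref{eq: flow} with $u$ and using the definition of $\mu$) gives $\tfrac{d}{dt}\|u(t)\|_{L^2}^2 = 0$, so $\|u(t)\|_{L^2}=\|v(t)\|_{L^2}=\|u_0\|_{L^2}$. If $u_0 \equiv 0$ the uniqueness is trivial; otherwise, continuity in $H^1$ yields, after possibly shrinking $T$, uniform bounds $\|u(t)\|_{H^1},\|v(t)\|_{H^1}\le C_0$ and a uniform lower bound $\|u(t)\|_{L^{2\sigma+2}},\|v(t)\|_{L^{2\sigma+2}}\ge c_0>0$ on $[0,T]$ (on bounded $\Omega$ this lower bound is automatic from \eqref{eq: UnifBnPot} and mass preservation). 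Setting $w=u-v$, the equation for $w$ reads
\begin{equation*}
\partial_t w = \Delta w - \omega w + (\mu[u]-\mu[v])|u|^{2\sigma}u + \mu[v]\bigl(|u|^{2\sigma}u-|v|^{2\sigma}v\bigr),
\end{equation*}
and testing against $w$ yields
\begin{equation*}
\tfrac{1}{2}\tfrac{d}{dt}\|w\|_{L^2}^2 + \|\nabla w\|_{L^2}^2 + \omega\|w\|_{L^2}^2
= \int_\Omega \Bigl[(\mu[u]-\mu[v])|u|^{2\sigma}u + \mu[v]\bigl(|u|^{2\sigma}u-|v|^{2\sigma}v\bigr)\Bigr] w\,dx.
\end{equation*}

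To bound the right-hand side, I would use the two structural ingredients already implicit in Section \ref{secExist}: a Lipschitz-type estimate of the form $|\mu[u]-\mu[v]| \leq C(\|w\|_{H^1}+\|w\|_{L^{2\sigma+2}})$ (an elementary consequence of \eqref{eq: mu}, $C_0$, $c_0$, and the identity $\|a\|^2-\|b\|^2=\langle a-b,a+b\rangle$ applied to $\nabla$ and to the $L^{2\sigma+2}$ integrand), together with the pointwise inequality $\bigl||u|^{2\sigma}u-|v|^{2\sigma}v\bigr|\lesssim (|u|^{2\sigma}+|v|^{2\sigma})|w|$ and $\|\mu[v]\|_{L^\infty}\lesssim 1$. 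Hölder's inequality in the exponents $(\sigma+1)/\sigma,\sigma+1$ then gives
\begin{equation*}
\text{RHS} \lesssim \bigl(\|w\|_{H^1}+\|w\|_{L^{2\sigma+2}}\bigr)\|w\|_{L^{2\sigma+2}}\;+\;\|w\|_{L^{2\sigma+2}}^2.
\end{equation*}
By the Gagliardo--Nirenberg inequality \eqref{eq: GN}, $\|w\|_{L^{2\sigma+2}}\le C\|w\|_{L^2}^{1-\theta}\|\nabla w\|_{L^2}^{\theta}$ with $\theta = d\sigma/(2\sigma+2)$, and the subcritical assumption $\sigma<2/(d-2)^+$ gives $\theta<1$. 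Writing $\|w\|_{H^1}\le \|w\|_{L^2}+\|\nabla w\|_{L^2}$ and applying Young's inequality, every product above is of the form $\|w\|_{L^2}^{\alpha}\|\nabla w\|_{L^2}^{\beta}$ with $\beta<2$, so a small constant times $\|\nabla w\|_{L^2}^2$ can be absorbed into the dissipation on the left, leaving $C'\|w\|_{L^2}^2$. Grönwall's inequality applied to $y(t)=\|w(t)\|_{L^2}^2$ with $y(0)=0$ yields $w\equiv 0$ on $[0,T]$; a standard continuation argument extends this to $[0,T_{\max})$.

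The main obstacle, and the spot where the exponent arithmetic has to be checked carefully, is the cross term $\|w\|_{H^1}\|w\|_{L^{2\sigma+2}}$, which contributes a factor $\|\nabla w\|_{L^2}^{\theta+1}$; its absorption into $\|\nabla w\|_{L^2}^2$ requires exactly $\theta+1<2$, i.e.\ $d\sigma<2\sigma+2$, which is precisely the $H^1$-subcritical hypothesis. All other steps (Lipschitzness of $\mu$, pointwise nonlinear estimate, and $L^\infty$ control of $\mu[v]$) reduce to the uniform bounds already secured by Lemma \ref{prp: it_time} and the preservation of the $L^2$-mass.
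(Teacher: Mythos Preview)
Your proof is correct and follows essentially the same route as the paper: test the difference equation against $w=u-v$, split the nonlinearity into a $(\mu[u]-\mu[v])$-piece and a $\mu\cdot(|u|^{2\sigma}u-|v|^{2\sigma}v)$-piece, control everything via H\"older, Gagliardo--Nirenberg and Young, absorb the gradient term into the dissipation, and close with Gr\"onwall. If anything, your exponent bookkeeping for the cross term $\|\nabla w\|_{L^2}\,\|w\|_{L^{2\sigma+2}}$ (the check that $\theta+1<2$ is exactly the subcriticality hypothesis) is more explicit than the paper's.
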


\begin{proof}
Let $u,v \in C([0,T],H^1_0(\Omega))$ be two solutions to \eqref{eq: flow}. Here $T > 0$ is such that $T < \min (T_u,T_v)$ where $T_u, T_v > 0$ are the maximal times of existence of $u,v$. In particular, there exists $C = C(\|u_0\|_{H^1}) >0$ such that 
			\begin{equation}\label{eq: UpBondUniq}
				\max \left(\|u\|_{L^\infty([0,T],H^1)},  \|v\|_{L^\infty([0,T],H^1)}, \sup_{t\in[0,T]}\mu[u], \sup_{t\in[0,T]} \mu[v] \right) \leq C.
			\end{equation}
By multiplying the equation 
\begin{equation*}
 \partial_t (u -v) = \Delta(u-v) + \mu[u] |u|^{2\sigma} u - \mu[v] |v|^{2\sigma} v -\omega( u - v)
\end{equation*}
by $(u-v)$ and integrating in space, we obtain
\begin{equation}\label{eq: took1}
\begin{split}
 \frac{1}{2} \frac{d}{dt} \| u -v \|_{L^2}^2 &= -\| \nabla (u - v)\|_{L^2}^2 - \omega \| u - v\|_{L^2}^2  
 + \int (\mu[u] |u|^{2\sigma}u - \mu[v]|v|^{2\sigma }v)(u-v) \,dx.
 \end{split}
\end{equation}
We have
\begin{equation*}
    \begin{aligned}
       \int (\mu[u] |u|^{2\sigma}u - \mu[v]|v|^{2\sigma }v)(u-v) \,dx  &=  \mu[u] \int (|u|^{2\sigma}u - |v|^{2\sigma }v)(u-v) \,dx \\
       &+ (\mu[u] - \mu[v]) \int |v|^{2\sigma} v(u -v) \, dx.
    \end{aligned}
\end{equation*}
By Young's inequality, we see that 
\begin{equation*}
\begin{aligned}
       \int |v|^{2\sigma} v(u -v) \, dx &\leq \| v\|_{L^{2\sigma + 2}}^{2\sigma + 1} \| u -v \|_{L^{2\sigma + 2}} \\ 
       &\lesssim \| v\|_{L^{2\sigma + 2}}^{2\sigma + 1}\left(  \| u -v \|_{L^2}^{1 - \frac{d\sigma}{2\sigma + 2}}  \| \nabla( u -v ) \|_{L^2}^ \frac{d\sigma}{2\sigma + 2}  \right) \\
       &\lesssim \| v\|_{L^{2\sigma + 2}}^{2\sigma + 1} \left( \frac{1}{\delta}  \| u -v \|_{L^2}^2 + \delta \| \nabla( u -v) \|_{L^2}^2    \right).
\end{aligned}
\end{equation*}
where $\delta >0$ is to be chosen. So, after a rearrangement, from \eqref{eq: took1} we obtain that there exists $K > 0$ such that
\begin{equation*}
\begin{aligned}
     \frac{1}{2} \frac{d}{dt} \| u - v\|_{L^2}^2 &\leq K ( \mu[u] + \mu[v]) (( \| u \|_{L^{2\sigma + 2}}^{2\sigma + 1} + \| v \|_{L^{2\sigma + 2}}^{2\sigma + 1}) \delta -1) \| \nabla (u - v) \|_{L^2}^2 \\
     & + ( K ( \mu[u] + \mu[v]) ( \| u \|_{L^{2\sigma + 2}}^{2\sigma + 1} + \| v \|_{L^{2\sigma + 2}}^{2\sigma + 1}) \frac{1}{\delta} -\omega) \| u - v \|_{L^2}^2
\end{aligned}
\end{equation*}
We choose $\delta$ such that $ ( \| u \|_{L^{2\sigma + 2}}^{2\sigma + 1} + \| v \|_{L^{2\sigma + 2}}^{2\sigma + 1}) \delta -1 = 0$ and obtain 
\begin{equation*}
     \frac{1}{2} \frac{d}{dt} \| u - v\|_{L^2}^2 \leq \mathcal{C} \| u - v\|_{L^2}^2
\end{equation*}
where $\mathcal{C} = \mathcal{C}(\omega, \delta, \| u_0\|_{H^1}) < \infty$. Since $u(0) = v(0) = u_0$, by Gronwall's Lemma, we conclude that $v(t) = u(t)$ for any $t \in [0,T]$.
\end{proof} 

\begin{remark}
The approach developed here fails for $\Omega=\R^d$. Indeed, the Aubin-Lions lemma would only provide the convergence
\begin{equation*}
u^{(k)}\to u,\quad\textrm{in}\;L^2([0, \tilde T],H^1_{loc}(\R^d)),
\end{equation*}
which is not enough to define $\mu[u]$, as the denominator could converge to zero.
\end{remark}
We are now ready to prove Theorem \ref{thm: existence}.
\begin{proof}[Proof of Theorem \ref{thm: existence}]
    The proof is derived from Propositions \ref{thm: ConPrin1} \ref{thm: proof1}, \ref{prpLocEx} and \ref{prp: uni}
\end{proof}
\section{Global Well-posedness}

Let $F: H^1_0(\Omega) \setminus \{0\} \to \R$ be defined as 
\begin{equation}\label{eq: lyapF}
    F[u(t)] =  \frac{\|\nabla u(t) \|_{L^2}^2 + \omega \| u(t) \|_{L^2}^2}{ \| u(t) \|_{L^{2\sigma + 2}}^2}.
\end{equation}

The next lemma demonstrates that $F$ serves as a Lyapunov functional for the dynamics of \eqref{eq: flow}. 
\begin{lem}
    Let $u_0 \in H^1_0(\Omega)$ and let $u(t)\in C\big([0,T_{max}), H^1_0(\Omega)\big)$ be the corresponding solution to \eqref{eq: flow}. Then, for any $t\in [0,T_{max})$ we have 
    \begin{equation}\label{eq: massConserv}
        \| u(t) \|_{L^2} = \| u_0 \|_{L^2}.
    \end{equation}
    Moreover, if $\| \nabla u(t_0) \|_{L^2}^2 + \omega \| u_0 \|_{L^2}^2 = 0$ for some $t_0 >0$ then there exists $\eps >0$ such that 
         \begin{equation}\label{eq: KinDer}
             \| \nabla u(t_0 + \eps) \|_{L^2}^2 + \omega \| u_0 \|_{L^2}^2 < 0
         \end{equation}
         and otherwise for any $t > t_0$, 
    \begin{equation}\label{eq: estFundamental}
        F[u(t)] =  F[u(t_0)] \, \exp \left(-2 \int_{t_0}^t \frac{\| \partial_s u(s)\|_{L^2}^2}{\|\nabla u(s) \|_{L^2}^2 + \omega \| u(s) \|_{L^2}^2} \, ds \right).
    \end{equation}
\end{lem}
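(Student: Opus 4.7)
The plan is to derive both conclusions by testing equation \eqref{eq: flow} against appropriate functions and exploiting the algebraic identity $\mu[u]\|u\|_{L^{2\sigma+2}}^{2\sigma+2}=\|\nabla u\|_{L^2}^2+\omega\|u\|_{L^2}^2$ built into the definition \eqref{eq: mu}. For regularity I rely on the fact that, as established in the previous section, for $H^2$-data the solution satisfies $u\in L^2_{loc}(H^2)$ and $\partial_t u\in L^2_{loc}(L^2)$, so all the pairings below are legitimate; the general $H^1_0$-case is recovered by density and continuous dependence.

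To obtain mass conservation \eqref{eq: massConserv}, I pair \eqref{eq: flow} with $u$ and integrate by parts:
\[
\tfrac{1}{2}\tfrac{d}{dt}\|u\|_{L^2}^2 = -\|\nabla u\|_{L^2}^2 - \omega\|u\|_{L^2}^2 + \mu[u]\|u\|_{L^{2\sigma+2}}^{2\sigma+2},
\]
and the right-hand side vanishes identically by the very definition of $\mu[u]$.

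For the Lyapunov identity, introduce the shorthand $N(t)=\|\nabla u\|_{L^2}^2+\omega\|u\|_{L^2}^2$, $A(t)=\|u\|_{L^{2\sigma+2}}^{2\sigma+2}$, and $D(t)=\|u\|_{L^{2\sigma+2}}^2=A^{1/(\sigma+1)}$, so that $F[u]=N/D$ and $\mu[u]=N/A$. Pairing \eqref{eq: flow} with $\partial_t u$, integrating by parts, and using $\int u\,\partial_t u\,dx=0$ from mass conservation yields $\|\partial_t u\|_{L^2}^2=-\tfrac{1}{2}\tfrac{d}{dt}\|\nabla u\|_{L^2}^2+\tfrac{\mu[u]}{2\sigma+2}A'(t)$, hence $N'(t)=-2\|\partial_t u\|_{L^2}^2+\tfrac{\mu[u]}{\sigma+1}A'(t)$. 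Combined with $D'(t)=\tfrac{D}{(\sigma+1)A}A'(t)$, the quotient rule gives
\[
\tfrac{dF}{dt}=\tfrac{N'}{D}-\tfrac{N D'}{D^2}=-\tfrac{2\|\partial_t u\|_{L^2}^2}{D}+\tfrac{A'(t)}{(\sigma+1)D}\bigl(\mu[u]-N/A\bigr),
\]
and the parenthesis vanishes identically thanks to $\mu[u]=N/A$, leaving $F'(t)=-2\|\partial_t u\|_{L^2}^2/D$. Rewriting this as $(\ln F)'=-2\|\partial_t u\|_{L^2}^2/N$ and integrating from $t_0$ to $t$ produces \eqref{eq: estFundamental}, provided $N$ does not change sign on $[t_0,t]$.

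For the dichotomy, the case $N(t_0)=0$ is handled by noting that $\mu[u(t_0)]=N(t_0)/A(t_0)=0$, so the expression for $N'$ collapses at $t_0$ to $N'(t_0)=-2\|\partial_t u(t_0)\|_{L^2}^2\le 0$, with strict inequality outside the pathological situation where $\partial_t u(t_0)\equiv 0$ (which, by uniqueness, forces $u$ to be permanently stationary and is excluded from the nontrivial dynamics considered). Continuity of $N$ then yields \eqref{eq: KinDer}. The main technical point I anticipate is rigorously justifying the time-differentiations under only $H^1_0$ regularity on the initial data, which I would handle by performing the computations on $H^2$-approximating solutions as in Proposition~\ref{prpLocEx} and passing to the limit using strong $C(L^2)$ and weak $L^2(H^2)$ convergence together with continuity of $\mu$.
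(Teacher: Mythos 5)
Your proposal is correct and follows essentially the same route as the paper: mass conservation from pairing with $u$ and the definition of $\mu$, then pairing with $\partial_t u$ and using $\mu[u]\,\|u\|_{L^{2\sigma+2}}^{2\sigma+2}=\|\nabla u\|_{L^2}^2+\omega\|u\|_{L^2}^2$ to obtain $\tfrac{d}{dt}\ln F=-2\|\partial_t u\|_{L^2}^2/(\|\nabla u\|_{L^2}^2+\omega\|u\|_{L^2}^2)$, with the degenerate case and the rigor for $H^1_0$ data handled by the same approximation argument the paper invokes. Your quotient-rule computation of $F'$ is just a reorganization of the paper's manipulation into a logarithmic derivative, and your explicit remark about the case $\partial_t u(t_0)=0$ is, if anything, slightly more careful than the paper's treatment of \eqref{eq: KinDer}.
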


\begin{proof}
    First, we suppose that the solution is smooth enough. By multiplying \eqref{eq: flow} by $u$ and integrating in space and time, we get \eqref{eq: massConserv}. Moreover, by taking the scalar product of \eqref{eq: flow} with $\partial_t u$ and exploiting \eqref{eq: massConserv}, we get 
    \begin{equation}\label{eq: est1}
    \begin{aligned}
        \| \partial_t u(t) \|_{L^2}^2 &= -\frac{1}{2} \frac{d}{dt} \big\| \nabla u(t) \big\|_{L^2}^2 + \frac{\mu[u(t)]}{2\sigma + 2} \d \big\| u(t) \big\|_{L^{2\sigma +2}}^{2\sigma + 2}  \\
        &= -\frac{1}{2} \frac{d}{dt} \big\| \nabla u(t) \big\|_{L^2}^2 + \d \left(\frac{\mu[u(t)]}{2\sigma + 2}  \big\| u(t) \big\|_{L^{2\sigma +2}}^{2\sigma + 2}\right) - \frac{\big\| u(t) \big\|_{L^{2\sigma +2}}^{2\sigma + 2}}{2\sigma + 2}   \d \mu[u(t)]
    \end{aligned}
    \end{equation}
    The definition of $\mu[u]$ in \eqref{eq: mu} implies that the second term on the right-hand-side of \eqref{eq: est1} is given by
    \begin{equation*}
       \d \left(\frac{\mu[u(t)]}{2\sigma + 2}  \big\| u(t) \big\|_{L^{2\sigma +2}}^{2\sigma + 2}\right) = \frac{1}{2\sigma + 2}\d\| \nabla u(t) \|_{L^2}^2
    \end{equation*}
    Moreover, the last term is 
    \begin{equation*}
        \frac{\big\| u(t) \big\|_{L^{2\sigma +2}}^{2\sigma + 2}}{2\sigma + 2}   \d \mu[u(t)] = \frac{1}{2\sigma + 2}\d\| \nabla u(t) \|_{L^2}^2 - ( \| \nabla u(t)\|_{L^2}^2 + \omega \| u(t)\|_{L^2}^2) \d \ln \left( \| u(t) \|_{L^{2\sigma + 2}} \right).
    \end{equation*}
    So, we write \eqref{eq: est1} as 
    \begin{equation*}
        \frac{1}{2} \d \| \nabla u(t) \|_{L^2}^2 = - \| \partial_t u(t) \|_{L^2}^2 + ( \| \nabla u(t)\|_{L^2}^2 + \omega \| u(t)\|_{L^2}^2) \d \ln \left( \| u(t) \|_{L^{2\sigma + 2}} \right).
    \end{equation*}
    For $( \| \nabla u(t)\|_{L^2}^2 + \omega \| u(t)\|_{L^2}^2) = 0$, this implies \eqref{eq: KinDer} and otherwise we obtain
    \begin{equation*}
         \frac{1}{2} \frac{\d \left( \| \nabla u(t) \|_{L^2}^2 + \omega \| u(t) \|_{L^2}^2 \right)}{  \| \nabla u(t)\|_{L^2}^2 + \omega \| u(t)\|_{L^2}^2 } - \d \ln \left( \| u(t) \|_{L^{2\sigma + 2}} \right) = - \frac{ \| \partial_t u(t) \|_{L^2}^2}{ \| \nabla u(t)\|_{L^2}^2 + \omega \| u(t)\|_{L^2}^2}.
    \end{equation*}
    This implies
    \begin{equation*}
        \d \ln \left( \frac{\sqrt{ \| \nabla u(t)\|_{L^2}^2 + \omega \| u(t)\|_{L^2}^2}}{\| u (t) \|_{L^{2\sigma + 2}}} \right)  = - \frac{ \| \partial_t u(t) \|_{L^2}^2}{ \| \nabla u(t)\|_{L^2}^2 + \omega \| u(t)\|_{L^2}^2},
    \end{equation*}
    which is estimate \eqref{eq: estFundamental}.  
    A standard approximation technique can be used to obtain \eqref{eq: KinDer} and \eqref{eq: estFundamental} in our setting. 
\end{proof}

We now prove that \eqref{eq: massConserv} and \eqref{eq: estFundamental} yield a uniform bound on the $H^1$-norm of the solution. We start with the case $\omega \geq 0$. 

\begin{cor}\label{cor: boundness}
      For any $\omega \geq 0$ and $u_0 \in H^1_0(\Omega)$, the corresponding solution to \eqref{eq: flow} is global in time and there exists  $C = C(u_0) >0$ such that  
          \begin{equation}\label{eq: UnifBnd1}
              \| u \|_{L^\infty([0,\infty), H^1)} \leq C.
          \end{equation}
    Furthermore, for $\omega >0$ and any $u_0 \not\equiv 0$,   there exists  $K = K(u_0) >0$ such that for any $t \in [0,\infty)$, 
        \begin{equation}\label{eq: InfPotEn}
              \|  u(t) \|_{L^{2\sigma + 2}} \geq K.
        \end{equation} 
\end{cor}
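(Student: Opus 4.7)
The plan is to leverage the Lyapunov identity \eqref{eq: estFundamental}, which gives monotonicity of $F$, and combine it with the Gagliardo-Nirenberg inequality \eqref{eq: GN} and mass conservation \eqref{eq: massConserv} to obtain a uniform $H^1$ bound; the blow-up alternative in Theorem~\ref{thm: existence} then upgrades this to global existence.

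First I would handle the trivial case $u_0 \equiv 0$ separately, for which uniqueness (Proposition~\ref{prp: uni}) yields $u \equiv 0$ and both conclusions are vacuous. For $u_0 \not\equiv 0$ and $\omega \geq 0$, I would verify that the denominator appearing in \eqref{eq: estFundamental}, namely $\|\nabla u(t)\|_{L^2}^2 + \omega \|u(t)\|_{L^2}^2$, stays strictly positive along the flow: when $\omega > 0$ this is immediate from $\|u(t)\|_{L^2} = \|u_0\|_{L^2} > 0$, and when $\omega = 0$ a vanishing Dirichlet gradient would force $u(t) \equiv 0$, contradicting mass conservation. Thus the degenerate alternative \eqref{eq: KinDer} never triggers, and \eqref{eq: estFundamental} gives $F[u(t)] \leq F[u_0]$ on the maximal interval $[0, T_{max})$.

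Next, dropping the nonnegative $\omega$-term on the left yields
\begin{equation*}
\|\nabla u(t)\|_{L^2}^2 \leq F[u_0]\,\|u(t)\|_{L^{2\sigma+2}}^2,
\end{equation*}
and substituting the squared Gagliardo-Nirenberg bound $\|u(t)\|_{L^{2\sigma+2}}^2 \lesssim \|u_0\|_{L^2}^{2-d\sigma/(\sigma+1)}\,\|\nabla u(t)\|_{L^2}^{d\sigma/(\sigma+1)}$, combined with the fact that $d\sigma/(\sigma+1) < 2$ under the subcriticality assumption $\sigma < 2/(d-2)^+$, absorbs the gradient factor and produces a uniform bound $\|\nabla u(t)\|_{L^2} \leq C(u_0)$. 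The blow-up criterion in Theorem~\ref{thm: existence} then forces $T_{max} = \infty$, and combining the gradient bound with \eqref{eq: massConserv} yields \eqref{eq: UnifBnd1}.

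For \eqref{eq: InfPotEn} with $\omega > 0$, I would reuse $F[u(t)] \leq F[u_0]$ but keep the $\omega$-contribution to bound from below:
\begin{equation*}
\|u(t)\|_{L^{2\sigma+2}}^2 \geq \frac{\|\nabla u(t)\|_{L^2}^2 + \omega\|u(t)\|_{L^2}^2}{F[u_0]} \geq \frac{\omega\,\|u_0\|_{L^2}^2}{F[u_0]},
\end{equation*}
which gives $K = (\omega\|u_0\|_{L^2}^2/F[u_0])^{1/2}$. The only delicate bookkeeping in the whole argument is checking that the degenerate alternative \eqref{eq: KinDer} is never activated, so that the Lyapunov identity holds throughout the lifespan; the remainder is a direct interpolation-plus-monotonicity calculation.
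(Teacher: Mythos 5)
Your proposal is correct and follows essentially the same route as the paper: monotonicity of $F$ from \eqref{eq: estFundamental}, mass conservation, the Gagliardo--Nirenberg inequality with the subcritical exponent $d\sigma/(\sigma+1)<2$ to absorb the gradient factor, the blow-up alternative to conclude global existence, and the inequality $\|u(t)\|_{L^{2\sigma+2}}^2 \geq \omega\|u_0\|_{L^2}^2/F[u_0]$ for the lower bound. Your explicit check that the denominator $\|\nabla u(t)\|_{L^2}^2+\omega\|u(t)\|_{L^2}^2$ never vanishes when $\omega\geq 0$ (so that \eqref{eq: KinDer} is never triggered) is a small piece of bookkeeping the paper leaves implicit, but it does not change the argument.
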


\begin{proof}
    Combining \eqref{eq: massConserv}, \eqref{eq: estFundamental} and \eqref{eq: GN}, we obtain 
    \begin{equation}\label{eq: InfPotComp}
        \begin{aligned}
            \| \nabla u(t) \|_{L^2}^2 & \leq  \| \nabla u(t) \|_{L^2}^2 + \omega \| u_0 \|_{L^2}^2 \leq F[u_0] \| u(t) \|_{L^{2\sigma + 2}}^2 \leq C_{GN}^2 F[u_0] \| \nabla u(t) \|_{L^2}^{\frac{d\sigma}{\sigma + 1}}.  
        \end{aligned}
    \end{equation}
    Since $\frac{d\sigma}{\sigma + 1} < 2$, this implies \eqref{eq: UnifBnd1}. Moreover, we get
    \begin{equation*}
        \frac{\omega \| u_0 \|_{L^2}}{F[u_0]}\leq  \frac{\| \nabla u(t) \|_{L^2}^2 + \omega \| u_0 \|_{L^2}}{F[u_0]}\ \leq  \| u(t)\|_{L^{2\sigma + 2}}^2.
    \end{equation*}
     Notice that for $\Omega \subset \R^d$ bounded, we also have the lower bound \eqref{eq: UnifBnPot} which yields
        \begin{equation*}
        \| u (t)\|_{L^{2\sigma + 2}} \geq \frac{\| u_0 \|_{L^2}}{Vol(\Omega)^{\frac{\sigma}{2\sigma + 2}}}.
    \end{equation*}
\end{proof}
 We proceed to obtain a uniform $H^1$-norm upper bound for $\omega < 0$.
    \begin{prop}\label{prp: bond2}
         For any $\omega < 0$ and $u_0 \in H^1_0(\Omega)$, 
         \begin{equation*}
              \mathbf{G} = \{ v \in H^1_0(\Omega), \| \nabla v \|_{L^2}^2 \leq - \omega \| u_0\|_{L^2}^2 \}.
         \end{equation*}
         and
         \begin{equation*}
             \mathbf{H} = \{ v \in H^1_0(\Omega), \| \nabla v \|_{L^2}^2 > - \omega \| u_0\|_{L^2}^2 \},
         \end{equation*}
         are invariant sets. Moreover, \eqref{eq: UnifBnd1} is true. If $u_0 \in \mathbf{H} $, then \eqref{eq: InfPotEn} holds.
    \end{prop}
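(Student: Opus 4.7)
Set $K(t) := \|\nabla u(t)\|_{L^2}^2 + \omega \|u(t)\|_{L^2}^2$, which by mass conservation \eqref{eq: massConserv} coincides with $\|\nabla u(t)\|_{L^2}^2 + \omega \|u_0\|_{L^2}^2$; hence $\mathbf{G} = \{K \leq 0\}$, $\mathbf{H} = \{K > 0\}$, and $t \mapsto K(t)$ is continuous on $[0, T_{max})$. The proof rests on two tools already established: the alternative \eqref{eq: KinDer} (whenever $K(t_0) = 0$, $K(t_0 + \eps)$ becomes strictly negative for some $\eps > 0$) and the Lyapunov identity \eqref{eq: estFundamental}, which makes $F$ nonincreasing along the flow wherever $K > 0$.

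For the invariance of $\mathbf{G}$, I argue by contradiction. If $u_0 \in \mathbf{G}$ but $K(t_1) > 0$ for some $t_1 > 0$, let $t^\ast = \sup\{s \in [0, t_1] : K(s) \leq 0\} \in [0, t_1)$. Continuity gives $K(t^\ast) = 0$ and $K(s) > 0$ on $(t^\ast, t_1]$, but \eqref{eq: KinDer} applied at $t_0 = t^\ast$ produces some $\eps > 0$ with $K(t^\ast + \eps) < 0$, a contradiction. The invariance of $\mathbf{H}$ is subtler and I expect it to be the main obstacle: one must exclude any first exit time $t^\ast > 0$ with $K(t^\ast) = 0$. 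On $[0, t^\ast)$ one has $F[u(t)] \leq F[u_0]$ by \eqref{eq: estFundamental}, while $\|u(t)\|_{L^{2\sigma + 2}}$ stays bounded above and below (the lower bound by \eqref{eq: UnifBnPot} in bounded domains, the upper bound by Gagliardo-Nirenberg together with the local-in-time $H^1$ continuity of $u$); continuity then forces $F[u(t)] \to 0$ as $t \to t^{\ast -}$, so by \eqref{eq: estFundamental} $\int_0^{t^\ast} \|\partial_s u\|_{L^2}^2/K(s)\,ds = +\infty$. This divergence must then be precluded, e.g.\ by combining $\partial_t u \in L^2_{loc}((0,T_{max}), L^2)$ (obtained by testing the equation against $\partial_t u$) with the quantitative control of $K$ near zero provided by \eqref{eq: KinDer}.

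Once invariance is available, the uniform $H^1$ bound \eqref{eq: UnifBnd1} splits into two cases. For $u_0 \in \mathbf{G}$, the inequality $\|\nabla u(t)\|_{L^2}^2 \leq -\omega\|u_0\|_{L^2}^2$ is built into the set. For $u_0 \in \mathbf{H}$, I reproduce the computation of Corollary \ref{cor: boundness}: combining $F[u(t)] \leq F[u_0]$ with \eqref{eq: GN} gives
\begin{equation*}
\|\nabla u(t)\|_{L^2}^2 + \omega \|u_0\|_{L^2}^2 \leq F[u_0]\, C_{GN}^2\, \|u_0\|_{L^2}^{2 - \frac{d\sigma}{\sigma + 1}}\, \|\nabla u(t)\|_{L^2}^{\frac{d\sigma}{\sigma + 1}},
\end{equation*}
which yields a uniform bound on $\|\nabla u(t)\|_{L^2}$ since $d\sigma/(\sigma + 1) < 2$.

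Finally, for \eqref{eq: InfPotEn} under the assumption $u_0 \in \mathbf{H}$: on bounded $\Omega$, inequality \eqref{eq: UnifBnPot} and mass conservation immediately give $\|u(t)\|_{L^{2\sigma+2}} \geq \|u_0\|_{L^2}/Vol(\Omega)^{\sigma/(2\sigma + 2)}$, independently of the invariance of $\mathbf{H}$. In the whole-space case, one uses the invariance of $\mathbf{H}$ and $F[u(t)] \leq F[u_0]$ to write $\|u(t)\|_{L^{2\sigma+2}}^2 \geq K(t)/F[u_0]$, and then one needs a uniform positive lower bound on $K(t)$, which is again secured by the invariance step.
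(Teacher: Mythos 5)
Your handling of the invariance of $\mathbf{G}$, of the uniform bound \eqref{eq: UnifBnd1}, and of \eqref{eq: InfPotEn} on bounded domains follows the paper's own computations (the $H^1$ bound is exactly the combination of $F[u(t)]\le F[u_0]$ with \eqref{eq: GN} and \eqref{eq: massConserv}). The genuine gap is the invariance of $\mathbf{H}$, which you yourself call ``the main obstacle'' and then leave open: you reduce it to precluding the divergence of $\int_0^{t^\ast}\|\partial_s u(s)\|_{L^2}^2/K(s)\,ds$ at a putative first exit time $t^\ast$, and propose to rule this out by combining $\partial_t u\in L^2_{loc}$ with ``quantitative control of $K$ near zero provided by \eqref{eq: KinDer}''. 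Neither ingredient can do the job: \eqref{eq: KinDer} says nothing about how $K$ approaches zero from above on $[0,t^\ast)$ (it only describes what happens after a zero is attained), and square-integrability of $\partial_t u$ is perfectly compatible with the divergence --- if $K(t)\sim(t^\ast-t)$ while $\|\partial_t u\|_{L^2}$ stays of order one, the integral diverges logarithmically although $\partial_t u\in L^2_{loc}$. In fact, the identity behind \eqref{eq: estFundamental} reads $K'=-2\|\partial_t u\|_{L^2}^2+2K\,\tfrac{d}{dt}\ln\|u\|_{L^{2\sigma+2}}$, so that $\int_0^{t^\ast}\|\partial_s u\|_{L^2}^2/K\,ds=\ln\|u\|_{L^{2\sigma+2}}\big|_0^{t^\ast}-\tfrac12\ln K\big|_0^{t^\ast}$, which diverges automatically whenever $K(t^\ast)=0$; hence no contradiction can be extracted from that integral alone, and your plan as stated cannot close.

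The paper takes a shorter route: the multiplicative form of \eqref{eq: estFundamental} shows that $F[u(t)]$ keeps the sign of $F[u_0]$ (the exponential factor is strictly positive), and since $\|u(t)\|_{L^{2\sigma+2}}>0$, the sign of $K(t)=F[u(t)]\,\|u(t)\|_{L^{2\sigma+2}}^2$ is preserved; the borderline case $K(t_0)=0$ is handled by \eqref{eq: KinDer}, which pushes the solution strictly into $\mathbf{G}$. This gives the invariance of both sets at once, with no exit-time analysis. Two smaller loose ends in your write-up: your contradiction for $\mathbf{G}$ uses \eqref{eq: KinDer} as though the $\eps$ there were arbitrarily small, whereas the statement only provides some $\eps>0$, so you should either iterate or argue via the sign of $\tfrac{d}{dt}K$ at the zero; and in the whole-space case your final claim that a uniform positive lower bound on $K(t)$ is ``secured by the invariance step'' is unjustified --- invariance only yields $K(t)>0$ for each $t$, not a bound away from zero, and this is precisely the point where the paper instead invokes the chain \eqref{eq: InfPotComp}, i.e. $\|u(t)\|_{L^{2\sigma+2}}^2\ge K(t)/F[u_0]$.
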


    \begin{proof}
        Equation \eqref{eq: estFundamental} implies that $F[u_0] $ and $F[u(t)]$ have the same sign for any $t \in [0, T_{max})$. If $ u_0 \in \mathbf{H}$, then $F[u_0] > 0$ and by \eqref{eq: KinDer}, $F[u(t)] > 0$ for any $t>0$.    Thus $\mathbf{G}$ and $\mathbf{H}$ are invariant sets. 
        For $u_0 \in \mathbf{G}$ we obtain the uniform bound 
        \begin{equation*}
            \| \nabla u \|_{L^\infty([0,\infty),L^2)} \leq -\omega \| u_0 \|_{L^2}^2.
        \end{equation*}
       For  $u_0 \in \mathbf{H}$, using \eqref{eq: GN}, \eqref{eq: massConserv} and \eqref{eq: estFundamental}, we obtain
       \begin{equation*}
       \begin{aligned}
            \| \nabla u(t) \|_{L^2}^2 & \leq  F[u_0] \| u(t) \|_{L^{2\sigma + 2}}^2  - \omega \| u_0 \|_{L^2}^2 \\
            & \leq C_{GN}^2 F[u_0] \| u_0 \|_{L^2}^{2 - \frac{d\sigma}{\sigma + 1}}  \| \nabla u(t) \|_{L^2}^\frac{d\sigma}{\sigma + 1} - \omega \| u_0 \|_{L^2}^2.
       \end{aligned}
    \end{equation*}
    Since $2 > \frac{d\sigma}{\sigma + 1}$, we obtain \eqref{eq: UnifBnd1}. Moreover, \eqref{eq: InfPotEn} holds because of \eqref{eq: InfPotComp}.
    \end{proof}

\section{Asymptotic Behavior}\label{secAsympt}

We start with a result in bounded domains. Here we prove that one stationary state belongs to the $\omega$-limit set of a given initial condition.

\begin{lem}\label{lem: oneStSt}
    Let $\Omega \subset \R^d$ be bounded, $ \omega > 0$, and $0 < \sigma < \frac{2}{(d-2)^+}$. Let  $u_0 \in H^1_0(\Omega)$, and $u \in C([0, \infty), H^1_0(\Omega))$ be the corresponding solution to \eqref{eq: flow}. Then there exists $(t_j)_{j \in \N}$, $t_j \to \infty$ as $j \to \infty$ and $Q \in H^1_0(\Omega)$ satisfying \eqref{eq: stst} such that 
    \begin{equation}\label{eq: convStSt}
        \lim_{j \to \infty} \| u(t_j) - Q \|_{H^1} \to 0.
    \end{equation}
\end{lem}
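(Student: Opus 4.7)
The strategy is to produce a subsequence along which $u(t_j)$ converges weakly to some $Q$ via the Lyapunov functional $F$, identify $Q$ as a stationary state, and then upgrade to strong convergence in $H^1$. Since $\omega>0$ and $u$ is global with $\|u(t)\|_{H^1}$ uniformly bounded by Corollary \ref{cor: boundness}, the functional $F[u(t)]$ is well-defined, non-increasing (by \eqref{eq: estFundamental}, noting that $\|\nabla u(t)\|_{L^2}^2 + \omega\|u(t)\|_{L^2}^2 \geq \omega \|u_0\|_{L^2}^2 > 0$ for $u_0 \not\equiv 0$), and bounded below by $0$. Hence $F[u(t)] \to f_\infty \geq 0$ as $t\to\infty$. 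Combining \eqref{eq: estFundamental} with the uniform upper bound on the denominator (from the $H^1$ bound and mass conservation) yields
\begin{equation*}
  \int_0^\infty \|\partial_t u(t)\|_{L^2}^2\, dt \;<\; \infty.
\end{equation*}
Therefore one can choose $t_j \to \infty$ with $\|\partial_t u(t_j)\|_{L^2} \to 0$.

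Using the uniform $H^1$ bound and the compactness of $H^1_0(\Omega) \hookrightarrow L^p(\Omega)$ for $p < 2^*$ in bounded domains, I pass to a subsequence (still denoted $t_j$) with $u(t_j) \rightharpoonup Q$ in $H^1_0(\Omega)$ and $u(t_j) \to Q$ strongly in $L^2$ and $L^{2\sigma+2}$. The lower bound \eqref{eq: InfPotEn} forces $\|Q\|_{L^{2\sigma+2}} \geq K > 0$ (if $u_0 \equiv 0$ the conclusion is trivial with $Q=0$). The key rewriting
\begin{equation*}
  \mu[u(t_j)] \;=\; \frac{F[u(t_j)]}{\|u(t_j)\|_{L^{2\sigma+2}}^{2\sigma}}
\end{equation*}
now shows that $\mu[u(t_j)] \to \mu_\infty := f_\infty / \|Q\|_{L^{2\sigma+2}}^{2\sigma}$, since both the numerator (Lyapunov limit) and the denominator (strong $L^{2\sigma+2}$ convergence) converge.

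Testing \eqref{eq: flow} at $t=t_j$ against $\varphi \in H^1_0(\Omega)$, each term passes to the limit: $\langle\partial_t u(t_j),\varphi\rangle \to 0$, $\langle \nabla u(t_j),\nabla\varphi\rangle \to \langle \nabla Q, \nabla\varphi\rangle$ by weak $H^1$ convergence, and $\mu[u(t_j)]\langle |u(t_j)|^{2\sigma}u(t_j),\varphi\rangle \to \mu_\infty \langle |Q|^{2\sigma}Q,\varphi\rangle$ by strong $L^{2\sigma+2}$ convergence and the bound on $\mu[u(t_j)]$. Thus $Q$ solves \eqref{eq: stst} weakly; testing against $Q$ itself identifies $\mu_Q = \mu_\infty$. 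To upgrade to strong $H^1$ convergence, I note that
\begin{equation*}
  \|\nabla u(t_j)\|_{L^2}^2 + \omega\|u(t_j)\|_{L^2}^2 \;=\; F[u(t_j)]\,\|u(t_j)\|_{L^{2\sigma+2}}^2 \;\longrightarrow\; f_\infty \|Q\|_{L^{2\sigma+2}}^2 \;=\; \|\nabla Q\|_{L^2}^2 + \omega\|Q\|_{L^2}^2,
\end{equation*}
the last equality coming from pairing \eqref{eq: stst} with $Q$. Since $\|u(t_j)\|_{L^2} \to \|Q\|_{L^2}$, this yields $\|\nabla u(t_j)\|_{L^2} \to \|\nabla Q\|_{L^2}$, and combined with weak convergence in $H^1_0(\Omega)$ (a Hilbert space) gives the strong convergence \eqref{eq: convStSt}.

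\textbf{Main obstacle.} The delicate point is passing to the limit in the nonlocal coefficient $\mu[u(t_j)]$, whose numerator involves $\|\nabla u(t_j)\|_{L^2}^2$ and is only weakly lower semicontinuous a priori. Rewriting $\mu$ through the Lyapunov functional $F$ circumvents this, but this trick requires a strictly positive limit of the denominator $\|u(t_j)\|_{L^{2\sigma+2}}$; it is precisely here that the hypothesis $\omega > 0$, through the uniform lower bound \eqref{eq: InfPotEn}, becomes essential.
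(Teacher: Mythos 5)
Your argument is essentially the paper's own proof: monotonicity of the Lyapunov functional $F$, finiteness of $\int_0^\infty\|\partial_t u(t)\|_{L^2}^2\,dt$, extraction of times $t_j$ with $\partial_t u(t_j)\to 0$, weak $H^1$ convergence plus the compact embedding into $L^{2\sigma+2}$, passage to the limit in the equation, and the upgrade to strong $H^1$ convergence by showing $\|\nabla u(t_j)\|_{L^2}^2+\omega\|u(t_j)\|_{L^2}^2$ converges to the corresponding quantity for $Q$. The only real difference is how the convergence of the nonlocal coefficient is obtained: you write $\mu[u(t_j)]=F[u(t_j)]/\|u(t_j)\|_{L^{2\sigma+2}}^{2\sigma}$ and use the convergence of $F$ together with the strong $L^{2\sigma+2}$ convergence, whereas the paper extracts a convergent subsequence from the boundedness of $\mu[u(t)]$ (via \eqref{eq: UnifBnPot}) and identifies the limit afterwards by pairing the limit equation with $u_\infty$. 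Both devices play the same role, and your version is arguably slightly cleaner since it avoids one subsequence extraction.

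One step is stated too quickly. You record only $F[u(t)]\to f_\infty\ge 0$ and then claim that \eqref{eq: estFundamental} together with the upper bound on the denominator gives $\int_0^\infty\|\partial_t u\|_{L^2}^2\,dt<\infty$. That implication needs the exponent in \eqref{eq: estFundamental} to stay bounded, i.e. $f_\infty>0$: if $f_\infty=0$ the weighted integral diverges and no bound on $\int\|\partial_t u\|_{L^2}^2$ follows. The gap is easy to close and is exactly what the paper checks first: since $\Omega$ is bounded, $\omega>0$, $\|u(t)\|_{L^2}=\|u_0\|_{L^2}$ by \eqref{eq: massConserv}, and $\|u(t)\|_{L^{2\sigma+2}}\lesssim\|u(t)\|_{H^1}\le C$ by Corollary \ref{cor: boundness}, one has $F[u(t)]\ge \omega\|u_0\|_{L^2}^2/C'>0$ uniformly in $t$ (the paper derives the same positive lower bound from \eqref{eq: GN}); hence $f_\infty>0$ and your integral bound is justified. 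With that observation inserted, the rest of your proof is correct.
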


\begin{proof}
The functional $F$ in \eqref{eq: lyapF} is continuous, decreasing in time, and has a lower bound. Indeed,  by \eqref{eq: GN} and \eqref{eq: massConserv} we get
\begin{equation*}
    F[u(t)] \geq \frac{\| \nabla u(t) \|_{L^2}^2 + \omega \| u_0 \|_{L^2}^2}{C_{GN}^2\| u_0 \|_{L^2}^{2 - \frac{d\sigma}{\sigma + 1}}\| \nabla u(t)\|_{L^2}^{\frac{d\sigma}{\sigma + 1}}}.
\end{equation*}  
So, from \eqref{eq: estFundamental}, we get
\begin{equation}\label{eq: FInfty}
    \lim_{t\to \infty} F[u(t)]  = F[u_0]   \exp \left(-2 \int_0^\infty \frac{\| \partial_s u(s)\|_{L^2}^2}{\|\nabla u(s) \|_{L^2}^2 + \omega \| u(s) \|_{L^2}^2} \, ds \right) = F_\infty >0,
\end{equation}
and
\begin{equation*}
    \int_0^\infty \frac{\| \partial_s u(s) \|_{L^2}^2}{\|\nabla u(s)\|_{L^2}^2 +  \omega \| u(s) \|_{L^2}^2} \, ds = \ln\left( \sqrt{\frac{F[u_0]}{F_\infty}}\right) < \infty.
\end{equation*}
For \eqref{eq: UnifBnd1}, there exists $M > 0$ such that
\begin{equation*}
    \sup_{t \geq 0} \| \nabla u(t)\|_{L^2} \leq M < 0.
\end{equation*}
It follows that 
\begin{equation*}
    \frac{1}{M^2 + \omega \| u_0\|_{L^2}^2} \int_0^\infty \| \partial_s u(s) \|_{L^2}^2 \, ds  \leq  \int_0^\infty \frac{\| \partial_s u(s) \|_{L^2}^2}{\|\nabla u(s)\|_{L^2}^2 +  \omega \| u_0 \|_{L^2}^2} \, ds  < \infty,
\end{equation*}
that is 
\begin{equation*}
    \int_0^\infty \| \partial_s u(s) \|_{L^2}^2 \, ds  < \infty.
\end{equation*}
Moreover, \eqref{eq: UnifBnPot} implies that  $\mu[u(t)]$ is bounded from above and below. 
Thus, there exists $(t_j)_{j \in \N}$, $t_j \to \infty$ as $j \to \infty$, $u_\infty \in H^1_0(\Omega)$ and $\mu_\infty \in \R^+$ such that $u(t_j) \rightharpoonup u_\infty$ in $H^1_0(\Omega)$, $\partial_t u(t_j) \to 0$ in $L^2(\Omega)$ and $\mu[u(t_j)] \to \mu_\infty$ as $j \to \infty$. From the compact embedding $H^1_0(\Omega) \hookrightarrow L^p (\Omega)$ for any $2 \leq p < \frac{2d}{(d-2)^+}$ we have $\| u_\infty \|_{L^2} = \| u_0 \|_{L^2}$ and  $\lim_{j\to \infty} \| u(t_j)  - u_\infty \|_{L^{2\sigma +2}} = 0$. Moreover, the asymptotic profile satisfies weakly 
\begin{equation*}
       0 = \Delta u_\infty - \omega u_\infty + \mu_\infty |u_\infty|^{2\sigma} u_\infty.
\end{equation*}
By taking the scalar product of this equation with $u_\infty$, we obtain that $\mu_\infty = \mu[u_\infty]$. So, we have that $\mu[u(t_j)] \to \mu[u_\infty]$ as $j \to \infty$. This implies that $\|  u(t_j) \|_{H^1} \to \|  u_\infty \|_{H^1}$ as $j \to \infty$. 
\end{proof}

The same proof works for $\Omega = \R^d$, $d \geq 2$ and $u_0 \in H^1_{rad}(R^d)$.

\begin{lem}\label{lem: oneStRad}
   Let $\omega > 0$, $u_0 \in H^1_{rad}(\R^d)$, and $u \in C([0, \infty), H^1_{rad}(\R^d))$ be the corresponding solution to \eqref{eq: flow}. Then there exists $(t_j)_{j \in \N}$, $t_j \to \infty$ as $j \to \infty$ and $Q \in H^1_0(\Omega)$ satisfying \eqref{eq: stst} such that 
    \begin{equation*}
        \lim_{j \to \infty} \| u(t_j) - Q \|_{H^1} \to 0.
    \end{equation*}
\end{lem}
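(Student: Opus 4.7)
My plan is to replay the proof of Lemma \ref{lem: oneStSt} nearly verbatim, with the single decisive modification of replacing the Rellich--Kondrachov compact embedding (which is unavailable on $\R^d$) by the Strauss radial compact embedding $H^1_{rad}(\R^d) \hookrightarrow L^p(\R^d)$, valid for $2 < p < \frac{2d}{(d-2)^+}$ when $d \geq 2$. Since \eqref{eq: flow} is invariant under the action of $O(d)$, the uniqueness statement of Proposition \ref{prp: uni} guarantees that the solution remains in $H^1_{rad}(\R^d)$ for all time, so the analysis stays within a radial framework.

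The Lyapunov machinery developed for Lemma \ref{lem: oneStSt} transfers directly: the uniform $H^1$ bound \eqref{eq: UnifBnd1} and the $L^{2\sigma+2}$ lower bound \eqref{eq: InfPotEn} of Corollary \ref{cor: boundness} both hold on $\R^d$ when $\omega > 0$, so $F[u(t)]$ converges to some $F_\infty > 0$, $\mu[u(t)]$ stays uniformly bounded above and below, and $\int_0^\infty \|\partial_s u(s)\|_{L^2}^2\, ds < \infty$. I would then extract $t_j \to \infty$ along which $u(t_j) \rightharpoonup u_\infty$ weakly in $H^1_{rad}(\R^d)$, $\partial_t u(t_j) \to 0$ in $L^2(\R^d)$, and (along a further subsequence) $\mu[u(t_j)] \to \mu_\infty > 0$. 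Applying Strauss compactness with $p = 2\sigma + 2$ upgrades the weak convergence to strong $L^{2\sigma+2}$ convergence, so $u_\infty \not\equiv 0$; passing to the weak limit in \eqref{eq: flow} then yields the stationary equation \eqref{eq: stst} with multiplier $\mu_\infty$, and testing against $u_\infty$ identifies $\mu_\infty = \mu[u_\infty]$, hence $F_\infty = F[u_\infty]$.

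The main obstacle, and the only genuine departure from the bounded-domain case, will be upgrading the convergence to strong $H^1$ in the absence of a compact $L^2$ embedding: weak convergence alone allows $L^2$ mass to leak at infinity. The key observation is that the identity $F_\infty = F[u_\infty]$, combined with strong $L^{2\sigma+2}$ convergence of the denominator, forces
\begin{equation*}
\|\nabla u(t_j)\|_{L^2}^2 + \omega \|u(t_j)\|_{L^2}^2 \longrightarrow \|\nabla u_\infty\|_{L^2}^2 + \omega \|u_\infty\|_{L^2}^2.
\end{equation*}
Because $\omega > 0$ and both $\|\nabla \cdot\|_{L^2}^2$ and $\|\cdot\|_{L^2}^2$ are weakly lower semicontinuous under $H^1$-weak convergence, the equality of the full sum with its limit forces equality in each summand separately, i.e.\ $\|\nabla u(t_j)\|_{L^2} \to \|\nabla u_\infty\|_{L^2}$ and $\|u(t_j)\|_{L^2} \to \|u_\infty\|_{L^2}$. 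Combined with weak $H^1$-convergence and uniform convexity of the Hilbert space $H^1_{rad}(\R^d)$, this yields $u(t_j) \to u_\infty$ strongly in $H^1$, establishing the lemma with $Q = u_\infty$.
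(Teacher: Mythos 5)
Your proposal is correct and follows essentially the same route as the paper: the paper's proof also replays Lemma \ref{lem: oneStSt} with the radial compact embedding $H^1_{rad}(\R^d)\hookrightarrow L^{2\sigma+2}(\R^d)$, and then uses the convergence $\mu[u(t_j)]\to\mu[u_\infty]$ (equivalently $F[u(t_j)]\to F[u_\infty]$) together with weak lower semicontinuity of both $\|\nabla\cdot\|_{L^2}^2$ and $\|\cdot\|_{L^2}^2$ and $\omega>0$ to force convergence of each summand and hence strong $H^1$ convergence. Your write-up in fact spells out the splitting-of-the-sum step and the nonvanishing of $u_\infty$ (via \eqref{eq: InfPotEn}) more explicitly than the paper does.
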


\begin{proof}
We keep the same notations as in the proof of Lemma \ref{lem: oneStSt}.
   From the compact embedding $H^1 \hookrightarrow L^{2\sigma + 2}$ and from $\mu[u(t_j)] \to \mu[u_\infty]$ we obtain that 
    \begin{equation*}
        \| u(t_j) \|_{H^1} \to \| u_\infty \|_{H^1},
    \end{equation*}
    from the weak lower semicontinuity of the norms. Indeed 
   \begin{align*}
   	&\| \nabla u_\infty\|_{L^2}^2 \leq \liminf  \| \nabla u(t_j)\|_{L^2}^2, \\
   	&\|  u_\infty\|_{L^2}^2 \leq \liminf  \| u(t_j)\|_{L^2}^2 
   \end{align*}
    which immediately implies the strong $H^1$ convergence. 
     The result follows from the same argument as the proof of Lemma \ref{lem: oneStSt}.
\end{proof}

We give a characterization of the $\omega$-limit set, defined in \eqref{eq: omegaLimit}.

\begin{cor}
    In the hypothesis of Lemma \ref{lem: oneStSt} or \ref{lem: oneStRad}, $W(u_0)$ is connected and compact with respect to the $L^p$-norm for any $p \in [2, 2/(d-2)^+)$. Moreover, there exists $F_\infty >0$ such that  
    \begin{equation*}
        W(u_0) \subset \mathcal{S}_\infty
    \end{equation*}
    where 
    \begin{equation*}
        S_\infty = \{ v\in \mathcal{S}_0\, :  \, F[v] = F_\infty \},
    \end{equation*}
    where $F_\infty$ is defined in \eqref{eq: FInfty}
\end{cor}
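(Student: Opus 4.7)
The plan is to establish three separate properties of $W(u_0)$: every element attains the level $F_\infty$ and solves the stationary equation, $W(u_0)$ is compact in $L^p$, and $W(u_0)$ is connected. The main ingredients will be the Lyapunov identity \eqref{eq: estFundamental} together with the uniform $H^1$ bound of Corollary \ref{cor: boundness}.

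First I would fix $v \in W(u_0)$ with $t_j \to \infty$ and $u(t_j) \to v$ in $H^1$. By mass conservation \eqref{eq: massConserv}, $\|v\|_{L^2} = \|u_0\|_{L^2} > 0$, so $F[v]$ is well defined. The continuous embedding $H^1 \hookrightarrow L^{2\sigma+2}$ makes $F$ continuous on $H^1 \setminus \{0\}$, and together with \eqref{eq: FInfty} this immediately yields $F[v] = F_\infty$. To show that $v$ solves \eqref{eq: stst}, I would run the flow starting from $v$: let $\tilde u_v$ be the unique solution of \eqref{eq: flow} with initial datum $v$, which exists on some maximal interval by Theorem \ref{thm: existence}. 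The Gronwall argument behind Proposition \ref{prp: uni}, combined with the uniform $H^1$ bound, produces locally Lipschitz dependence on initial data in $L^2$, and via interpolation in any subcritical $L^p$. Applied to the time-shifted orbits this gives $u(t_j + s) \to \tilde u_v(s)$ in $L^{2\sigma+2}$ for each fixed $s$, and the argument already used in the proof of Lemma \ref{lem: oneStSt} (convergence of $\mu[u(t_j+s)]$ together with weak lower semicontinuity of norms) upgrades this to strong $H^1$ convergence. Continuity of $F$ then gives $F[\tilde u_v(s)] = F_\infty$ for every $s$ in the lifespan of $\tilde u_v$, and \eqref{eq: estFundamental} forces the exponent to vanish, hence $\partial_s \tilde u_v \equiv 0$. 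Therefore $\tilde u_v(s) \equiv v$, $v$ solves \eqref{eq: stst}, and $v \in \mathcal{S}_\infty$.

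For compactness in $L^p$, Corollary \ref{cor: boundness} supplies the uniform bound $\sup_{t\geq 0} \|u(t)\|_{H^1} < \infty$. The compact embeddings $H^1_0(\Omega) \hookrightarrow L^p(\Omega)$ for bounded $\Omega$ (Rellich--Kondrachov) and $H^1_{rad}(\R^d) \hookrightarrow L^p(\R^d)$ for $d \geq 2$ in the stated subcritical range render the entire orbit relatively compact in $L^p$. Writing
\[
    W(u_0) = \bigcap_{T \geq 0} \overline{\{u(t)\,:\, t \geq T\}}^{\,L^p}
\]
exhibits it as a nested intersection of closed subsets of a compact set, hence $W(u_0)$ is itself compact in $L^p$. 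For connectedness I would use the classical dichotomy: supposing $W(u_0) = A \sqcup B$ with $A$ and $B$ disjoint, closed and nonempty, the $L^p$-continuity of $t \mapsto u(t)$ together with the intermediate value theorem applied to $\phi(t) = d_{L^p}(u(t), A) - d_{L^p}(u(t), B)$ would produce times $s_j \to \infty$ with $\phi(s_j) = 0$; any $L^p$-limit point of $\{u(s_j)\}$ would then lie in $W(u_0)$ but outside $A \cup B$, a contradiction.

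The hard part will be the continuous dependence on initial data used in the second step: one needs it in a topology strong enough to pass $F$ and the Lyapunov identity \eqref{eq: estFundamental} to the limit orbit. The uniform $H^1$ bound, the uniform control on $\mu[u]$, and the uniform lower bound on $\|u(t)\|_{L^{2\sigma+2}}$ make this manageable on bounded domains. The radial case on $\R^d$ requires some additional care because $H^1_{rad} \hookrightarrow L^2(\R^d)$ is not compact, so one must work at a strictly subcritical $L^p$ level and only upgrade to $H^1$ convergence after controlling $\mu[u]$.
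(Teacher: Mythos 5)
Your overall strategy is the same as the paper's: $F\equiv F_\infty$ on $W(u_0)$ from \eqref{eq: FInfty} plus monotonicity and strong $H^1$ convergence; membership in $\mathcal S_0$ by running the flow from a limit point and forcing the dissipation integral in \eqref{eq: estFundamental} to vanish; compactness and connectedness from relative compactness of the orbit tails in $L^p$. The paper does the stationarity step by contradiction, re-invoking Lemma \ref{lem: oneStSt} along the orbit through $v_0$ and claiming $u(t_n+\tau_n)\to w$, while you argue directly via continuous dependence at fixed time shifts; and for connectedness the paper uses that $W(u_0)=\bigcap_{s>0}\overline{\{u(t):t\ge s\}}$ is a nested intersection of compact connected sets, while you use the intermediate-value dichotomy. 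These are cosmetic variants of the same mechanism, so in substance you and the paper agree.

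The one sub-step you should not wave through is the upgrade of $u(t_j+s)\to\tilde u_v(s)$ from $L^2$ (Gronwall, Proposition \ref{prp: uni}) to strong $H^1$ ``by the argument already used in Lemma \ref{lem: oneStSt}''. That argument is not available verbatim: it crucially uses $\partial_t u(t_j)\to 0$ in $L^2$ along the chosen sequence to identify the weak limit as a solution of \eqref{eq: stst} with $\mu_\infty=\mu[u_\infty]$, and only then does testing with the limit give norm convergence; for the shifted times $t_j+s$ you have no smallness of $\partial_t u$. Weak $H^1$ plus strong $L^{2\sigma+2}$ convergence only yields $F[\tilde u_v(s)]\le F_\infty$, which is the useless direction (monotonicity of $F$ along $\tilde u_v$ already gives it), so as written the level $F_\infty$ does not pass to the limiting orbit. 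A standard repair: since $\int_0^\infty\|\partial_t u(t)\|_{L^2}^2\,dt<\infty$, choose $s_j\in[t_j+s,\,t_j+s+1]$ with $\|\partial_t u(s_j)\|_{L^2}^2\le\int_{t_j+s}^{t_j+s+1}\|\partial_t u\|_{L^2}^2\,dt\to 0$; then $\|u(s_j)-u(t_j+s)\|_{L^2}\to 0$ by Cauchy--Schwarz, so $u(s_j)\to\tilde u_v(s)$ in $L^2$, and now the Lemma \ref{lem: oneStSt} machinery applies along $s_j$ and identifies $\tilde u_v(s)$ as a stationary state at level $F_\infty$ (for $s=0$ this even gives $v\in\mathcal S_\infty$ directly, without running the flow). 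Note the paper's own step ``$u(t_n+\tau_n)\to w$ in $H^1_0$'' needs exactly the same care, as does the identification, in your connectedness argument and in the paper's intersection formula, of $L^p$-limit points of the orbit with elements of $W(u_0)$ as defined in \eqref{eq: omegaLimit} through $H^1$ convergence.
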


\begin{proof}
    Let $v \in W(u_0)$ and let  $(t_j)_{j \in \N}$, $t_j \to \infty$ as $j \to \infty$ be such that $u(t_j) \to v $ in $H^1_0(\Omega)$. Then, from the lower semicontinuity of the $H^1$-norm and the monotonicity of $F[u(t)]$ we have that
    \begin{equation*}
        F[v] = \lim_{j \to \infty} F[u(t_j)] = \lim_{t \to \infty} F[u(t)] = F_\infty.
    \end{equation*}
    Assume that there exists $v_0 \in \omega (u_0) \setminus \mathcal{S}_0$. Let $(t_n)_{n \in \N} \subset \R^+$, $t_n \to \infty$ as $n\to \infty$ be such that $u(t_n) \to v_0$ in $H^1_0(\Omega)$. Let $v \in C([0,\infty),H^1_0(\Omega))$ be the solution to \eqref{eq: flow} stemming from $v_0$.
    Then Lemma \ref{lem: oneStSt} implies that there exists $(\tau_n)_{n\in \N} \subset \R^+$ be such that $v(\tau_n) \to w$ in $H^1_0(\Omega)$, where $w \in \mathcal S_0$. Then we have $u(t_n + \tau_n) \to w$ in $H^1_0(\Omega)$, thus $w \in W(u_0)$ and $F[w] = F[v_0] = F_\infty$. Thus, by \eqref{eq: estFundamental}, we have
    \begin{equation*}
        \int_0^\infty \frac{\| \partial_\tau v(\tau)\|_{L^2}^2}{\| \nabla v(\tau)\|_{L^2}^2 + \omega \| u(\tau) \|_{L^2}^2} \, d\tau = F[w] - F[v_0] = 0,
    \end{equation*}
    that is $ \partial_t v = 0$ almost everywhere in $L^2$. So $v$ satisfies \eqref{eq: stst} in a weak sense for almost every $t >0$. Since $v \in H^1_0(\Omega)$, we get $v \in \mathcal S_0$, which is a contradiction. 

    Finally, notice that for any $s>0$, the set 
    \begin{equation*}
        \{ u(t):  \, t\geq s\}
    \end{equation*}
    is connected and relatively compact in $L^p(\Omega)$ for any $p\in \left[2, \frac{2d}{(d-2)^+}\right)$. Thus, the set 
    \begin{equation*}
         W(u_0) = \bigcap_{s>0} \overline{\{u(t):  \, t\geq s\}}
     \end{equation*}
    is connected and compact in $L^p(\Omega)$.
\end{proof}

\begin{theorem}
		Let $0 < \sigma < \frac{2}{(d-2)^+}$, $\omega >0$, $\Omega = \{ x \in \R\, : \, |x| < R \}$ for some $R >0$, $u_0 \in H^1_0(\Omega)$, $u_0 \geq 0$, $u_0 \centernot{\equiv} 0$ and let $u \in C([0,\infty), H^1_0(\Omega))$ be the solution to \eqref{eq: flow}. Then 
		\begin{equation*}
			u(t) \rightarrow Q_{gs} \ \mbox{ in } \ H^1_0(\Omega)\ \mbox{ as } \ t\rightarrow \infty.
		\end{equation*}
  where $Q_{gs}$ is the unique positive solution to \eqref{eq: stst}.
\end{theorem}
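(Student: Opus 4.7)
The plan combines three ingredients: positivity of the flow, identification of the $H^1$-subsequential limit using the uniqueness of the positive stationary state on a ball, and promotion of the subsequential convergence to a full limit via the Lyapunov functional.

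\textbf{Positivity and identification.} I would first rewrite \eqref{eq: flow} as $\partial_t u-\Delta u = c(t,x)u$ with $c(t,x)=-\omega+\mu[u(t,x)]|u(t,x)|^{2\sigma}$. Corollary \ref{cor: boundness} and the lower bound \eqref{eq: InfPotEn} keep $\mu[u]$ uniformly bounded, and standard parabolic regularity promotes $u$ to a smooth function on $(0,\infty)\times\Omega$, so that $c$ is locally bounded. Since $u_0\geq 0$ and $u_0\not\equiv 0$, the parabolic strong maximum principle yields $u(t,x)>0$ for every $t>0$, $x\in\Omega$. Lemma \ref{lem: oneStSt} then supplies $t_j\to\infty$ and $Q\in H^1_0(\Omega)$ with $u(t_j)\to Q$ strongly in $H^1_0$ and $Q$ solving \eqref{eq:st_st} with $\|Q\|_{L^2}=\|u_0\|_{L^2}>0$. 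Strong $L^2$-convergence forces $Q\geq 0$, and the elliptic strong maximum principle applied to $-\Delta Q+\omega Q=\mu_Q|Q|^{2\sigma}Q$ upgrades this to $Q>0$ in $\Omega$. Because $\Omega$ is a ball, Proposition \ref{prp:gs1} together with the scaling $Q=\mu_Q^{-1/(2\sigma)}\tilde Q$ (which converts \eqref{eq:st_st} into $-\Delta\tilde Q+\omega\tilde Q=|\tilde Q|^{2\sigma}\tilde Q$) shows that the positive solution of \eqref{eq:st_st} with $L^2$-mass $\|u_0\|_{L^2}$ is unique, so $Q=Q_{gs}$ and in particular $F[Q_{gs}]=F_\infty$.

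\textbf{From subsequential to full convergence.} Combining \eqref{eq: estFundamental} with the uniform $H^1$-bound of Corollary \ref{cor: boundness} yields $\int_0^\infty\|\partial_s u(s)\|_{L^2}^2\,ds<\infty$. Given any sequence $t_n\to\infty$, I would pick $\tilde t_n\in[t_n-1,t_n]$ minimising $\|\partial_s u\|_{L^2}$ on that interval; then $\|\partial_s u(\tilde t_n)\|_{L^2}\to 0$ and Cauchy--Schwarz together with the energy-dissipation bound gives $\|u(t_n)-u(\tilde t_n)\|_{L^2}\to 0$. Passing to a further subsequence so that $\mu[u(\tilde t_n)]$ converges (it is uniformly bounded by Corollary \ref{cor: boundness} and \eqref{eq: InfPotEn}), the argument of Lemma \ref{lem: oneStSt} applied along $(\tilde t_n)$ yields a strong $H^1_0$-limit $v$ which is a stationary state; by the preceding paragraph $v=Q_{gs}$. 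Hence $u(t_n)\to Q_{gs}$ in $L^2$ for every subsequence, so $u(t)\to Q_{gs}$ in $L^2$; Gagliardo--Nirenberg interpolation with the uniform $H^1$-bound promotes this to convergence in $L^{2\sigma+2}$. Plugging into $F[u(t)]=(\|\nabla u(t)\|_{L^2}^2+\omega\|u(t)\|_{L^2}^2)/\|u(t)\|_{L^{2\sigma+2}}^2\to F[Q_{gs}]$ and using mass conservation produces $\|\nabla u(t)\|_{L^2}\to\|\nabla Q_{gs}\|_{L^2}$. The uniform $H^1$-bound together with strong $L^2$-convergence yields $u(t)\rightharpoonup Q_{gs}$ weakly in $H^1$; weak convergence plus norm convergence then gives the desired strong $H^1_0$-convergence.

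\textbf{Main obstacle.} The delicate point is the passage from the subsequential $H^1$-convergence provided by Lemma \ref{lem: oneStSt} (which relies on a distinguished sequence where $\partial_t u\to 0$) to convergence of the entire orbit. This is unlocked by the energy-dissipation identity, which forces neighbouring times to be close in $L^2$ and reduces the full limit to a subsequential one, together with the rigidity coming from uniqueness of the positive ground state on a ball.
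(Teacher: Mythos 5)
Your proof is correct, and its first half (parabolic strong maximum principle to propagate positivity, identification of the subsequential limit from Lemma \ref{lem: oneStSt} with $Q_{gs}$ via the scaling $Q=\mu_Q^{-1/(2\sigma)}\tilde Q$, Proposition \ref{prp:gs1} and the mass constraint, hence $F_\infty=F[Q_{gs}]$) is essentially the paper's first step. Where you genuinely diverge is the promotion from subsequential to full convergence. The paper argues by contradiction: it extracts a sequence staying $\varepsilon$-away from $Q_{gs}$, passes to a weak $H^1$ limit, and uses compact embeddings plus weak lower semicontinuity together with the variational characterization of the ground state (Proposition \ref{lemMinBND}, i.e.\ $F[Q_{gs}]$ is the constrained minimum of $F$ at fixed mass) to force equality of the limiting energies, hence norm convergence and a contradiction. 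You instead exploit $\int_0^\infty\|\partial_s u(s)\|_{L^2}^2\,ds<\infty$ to attach to an arbitrary sequence $t_n\to\infty$ nearby times $\tilde t_n$ at which $\partial_t u$ is small, rerun the compactness argument of Lemma \ref{lem: oneStSt} along $(\tilde t_n)$, and identify every subsequential limit with $Q_{gs}$ by positivity and uniqueness of the positive stationary state with prescribed $L^2$ norm; the Lyapunov limit $F[u(t)]\to F_\infty=F[Q_{gs}]$ together with mass conservation then upgrades the resulting $L^2$ convergence of the whole orbit to strong $H^1$ convergence. Your route buys independence from the minimality of $F[Q_{gs}]$ (which the paper needs to turn the inequality $E[u]\le E[Q]$ into an equality), relying only on uniqueness plus dissipation, at the cost of the extra bookkeeping with $\tilde t_n$. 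Two small points to polish: the infimum of $\|\partial_s u\|_{L^2}$ over $[t_n-1,t_n]$ need not be attained, so choose $\tilde t_n$ with $\|\partial_s u(\tilde t_n)\|_{L^2}^2\le\int_{t_n-1}^{t_n}\|\partial_s u(s)\|_{L^2}^2\,ds$ (possible on a set of positive measure), which suffices; and the application of the strong maximum principle needs a sentence of parabolic regularity/bootstrapping to ensure $c(t,x)=-\omega+\mu[u]|u|^{2\sigma}$ is locally bounded for $t>0$ — a point the paper's own proof also leaves implicit.
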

	
\begin{proof}
        By Lemma \ref{lem: oneStRad}, the Maximum Principle  \cite[Ch. 2, Sec. 4]{Fr2013} and Proposition \ref{lemMinBND}, there exists 
		$\{t_n\}_{n\in \N}$, $t_n \rightarrow \infty$ as $n\to \infty$, such that $u(t_n) \rightarrow Q_{gs}$  in $H^1_0(\Omega)$ and $\lim_{n \to \infty} F(u(t_n)) = F(Q_{gs})$.  Since $F$ is a continuous and decreasing in time, we obtain  
		\begin{equation}\label{eq:grt}
			\lim_{t \rightarrow \infty} F[u(t)] = \lim_{n \rightarrow \infty} F[u(t_n)] = F[Q_{gs}].
		\end{equation}
		Suppose, by contradiction, that $u(t) \rightarrow Q$ in $H^1_0(\Omega)$ is not true. Then there exists $\{t_k\}_{k\in \N}$ such that $\| u(t_k)\|_{L^2} = \| u_0\|_{L^2},$ $F[u(t_k)] \rightarrow F[Q_{gs}]$ and there exists $\varepsilon > 0$ so that 
		\begin{equation*}
		 \| u(t_k) - Q_{gs}\|_{H^1} \geq \varepsilon \ \mbox{for all} \ k \in \N.
		\end{equation*}
		 Since $\sup_{k\in\N}\|u(t_k)\|_{H^1} \leq C < \infty$, there exists a subsequence, still denoted by $\{t_k\}_{k\in \N}$ and a profile $u \in H^1_0(\Omega)$ so that $u(t_k) \rightharpoonup u$ in $H^1_0(\Omega)$. By the compact embedding $H^1_0(\Omega) \hookrightarrow L^2(\Omega) 
         \cap L^{2\sigma +2}(\Omega)$, we obtain $\|u(t_k)\|_{L^{2\sigma + 2}} \rightarrow \|u\|_{L^{2\sigma + 2}}$. By the lower semi-continuity, it follows that
		\begin{equation*}
			E[u] \leq \liminf_k E[u(t_k)] = E[Q],
		\end{equation*}
		which implies that $E[u] = E[Q]$. In particular, $\| \nabla u(t_k)\|_{L^2} \rightarrow \|\nabla Q\|_{L^2}$, and so $u(t_k) \rightarrow Q$ in $H^1_0(\Omega)$. This is a contradiction with $\| u(t_k) - Q\|_{H^1} \geq \varepsilon$. 
\end{proof}
	
\bibliographystyle{abbrv} 
\bibliography{biblio}
\end{document}